\providecommand{\U}[1]{\protect\rule{.1in}{.1in}}
\newtheorem{theorem}{Theorem}[section]
\newtheorem*{acknowledgement*}{Acknowledgement}
\newtheorem{corollary}[theorem]{Corollary}
\newtheorem{lemma}[theorem]{Lemma}
\newtheorem{problem}[theorem]{Problem}
\newtheorem{proposition}[theorem]{Proposition}
\newtheorem{remark}[theorem]{Remark}
\newcommand{\B}{\mathbb{B}}
\newcommand{\R}{\mathbb{R}}
\newcommand{\hh}{\hat{h}}
\renewcommand{\th}{\tilde{h}}
\newcommand{\tr}{{\tilde{r}}}
\newcommand{\hH}{\hat{H}}
\newcommand{\hg}{\hat{g}}
\newcommand{\p}{\partial}
\renewcommand{\l}{\lambda}
\newcommand{\ton}[1]{\left(#1\right)}
\newcommand{\qua}[1]{\left[#1\right]}
\newcommand{\abs}[1]{\left|#1\right|}
\newcommand{\norm}[1]{\left\|#1\right\|}
\newcommand{\RR}{\mathrm{R}}
\newcommand{\tg}{{\tilde g}}
\title{Higher order distance-like functions and Sobolev spaces}
\author[Debora Impera]{Debora Impera}
\address[Debora Impera]{Dipartimento di Scienze Matematiche "Giuseppe Luigi Lagrange", Politecnico di Torino, Corso Duca degli Abruzzi, 24, Torino, Italy, I-10129}
\email{debora.impera@polito.it}
\author[Michele Rimoldi]{Michele Rimoldi}
\address[Michele Rimoldi]{Dipartimento di Scienze Matematiche "Giuseppe Luigi Lagrange", Politecnico di Torino, Corso Duca degli Abruzzi, 24, Torino, Italy, I-10129}
\email{michele.rimoldi@polito.it}
\author {Giona Veronelli}
\address[Giona Veronelli]{Dipartimento di Matematica e Applicazioni, Universit\`a di Milano Bicocca, via R. Cozzi 53, I-20125 Milano, Italy}
\email{giona.veronelli@unimib.it}
\begin{document}
\begin{abstract}
On non-compact Riemannian manifolds, we construct distance-like functions with derivatives controlled up to some order $k$ assuming bounds on the growth of the derivatives of the curvature up to order $k-2$ and on the decay of the injectivity radius. This construction extends previously known results in various directions, permitting to obtain consequences which are (in a sense) sharp. 
As a first main application, we give refined conditions guaranteeing the density of compactly supported functions in the Sobolev space $W^{k,p}$ on the manifold. Contrary to all previously known results this can be obtained also on manifolds with possibly unbounded geometry.
In the particular case $p=2$, making use of the Weitzenb\"ock formula for a Lichnerowicz Laplacian acting on $k$-covariant totally symmetric tensor fields, we can weaken the assumptions needed to obtain the density property, avoiding any condition on the highest order derivatives of the curvature.
Distance-like functions are also used to obtain new disturbed Sobolev inequalities,  disturbed $L^{p}$-Calder\'on-Zygmund inequalities and the full Omori-Yau maximum principle for the Hessian under weak assumptions.
\end{abstract}

\date{\today}
\subjclass[2010]{46E35, 53C21}
\keywords{Sobolev spaces on manifolds, Distance-like functions, Cut-off functions, Sampson-Lichnerowicz Laplacian, Calder\'on-Zygmund inequalities}

\maketitle
\tableofcontents

\section{Introduction and main results}\label{Intro}

\subsection{Higher order distance-like functions}
Let $(M, g)$ be a complete non-compact Riemannian manifold and let $r(x)$ be the distance from $x\in M$ to a fixed reference point $o\in M$. The behaviour of the function $r(x)$ detects how much the manifold differs from the Euclidean space. It turns out that in general $r(x)$ is $1$-Lipschitz on $M$, but only a.e. differentiable. This happens as soon as the point $o$ has a cut-locus, and in particular whenever $M$ has non-trivial topology. This lack of regularity of $r$ represents an obstacle to perform a certain kind of smooth analysis. In order to overcome this problem, it is often enough to use, in place of $r$, a suitable smooth approximation of the distance function. A first evidence in this direction is given by a well-known result by M. P. Gaffney \cite{Gaffney} (often attributed in the literature to R. E. Greene and H. Wu, \cite{GWAnnSci}), who showed the existence of a function $H(x)$ on $M$ which is smooth, distance-like (i.e. $C^{-1}r(x)<H(x)<Cr(x)$ outside a compact set) and whose gradient has norm uniformly bounded by 1. This can be obtained by smoothing $r$ in local coordinate charts by mollification.

In order to adapt to Riemannian manifolds certain techniques of $C^k$ (Euclidean) analysis, one would like to have at disposal on $M$ a smooth distance-like functions of which one controls not only the gradient, but also higher order (covariant) derivatives up to the order $k$. It is thus a natural question to what extent one can generalize Gaffney's result, providing sufficiently general geometric assumptions which guarantee the existence of a distance-like function with controlled higher order derivatives.

A first achievement in this direction is due to J. Cheeger and M. Gromov, \cite{CheegerGromov} (see also \cite{Shi} for a detailed proof), who remarked that a uniform double-sided bound on the sectional curvatures of $M$ permits to get a uniform bound also on the norm of the second order derivatives of Gaffney's first-order distance-like function through the local smoothing process, thanks to the Hessian comparison. More recently, L. F. Tam provided a new completely different proof of Cheeger-Gromov's result: start with the first-order distance-like function by Gaffney-Greene-Wu and let it evolve by the heat flow of $M$; the regularizing property of the heat flow, together with the sectional curvature bound, permits to control also the second order derivatives along the evolution, so that the evolution at a fixed time (say $t=1$) gives the aimed second-order distance-like function. Contrary to the proof by Cheeger-Gromov, Tam's approach is more flexible, in the sense that it can be adapted to other sets of assumptions. This was first observed by the second and third authors in \cite{RimoldiVeronelli}, where the existence of a second order-distance like function was obtained when the underlying manifold has (double-sided) bounded Ricci curvature and positive injectivity radius. For a further recent result, see also \cite{Huang}, where the assumption of positive injectivity radius is replaced by the request that the volumes of unitary balls are almost Euclidean. This seems a pretty strong assumption, which however a priori could not imply $\mathrm{inj}_g(M)>0$ in general.

As we will see later, for most applications the uniform bound on the second-order derivatives is a property stronger than what really needed, in that we can actually allow derivatives to explode at infinity, provided that their growth is appropriately controlled. Starting from this observation, in the previous paper \cite{IRV-HessCutOff} we found conditions which ensure the existence of a distance-like function with uniformly bounded gradient and sub-linear growth of the second-order derivatives. Namely, this holds true provided that: a) the norm of the Riemann tensor grows sub-quadratically or b) the norm of the Ricci tensor grows sub-quadratically and the injectivity radius can possibly vanish at infinity, but no faster than $1/r(x)$. In order to obtain this improvement, we introduced a new strategy: the distance-like function is no more given by a solution of the linear homogeneous parabolic heat flow, but instead by the solution of a non-linear non-homogeneous elliptic equation. 

Concerning the existence of higher order distance-like functions (i.e. with controlled derivatives up to the order $k>2$), 
up to our knowledge, the only known achievement is due to L. F. Tam ( see \cite[Remark 26.50]{Chowetal-III}) who proved the following
\begin{proposition}\label{prop_Tam}
Let $k\geq 2$.	Let $(M^m, g)$ be a complete Riemannian manifold such that, for some $D>0$, we have $|\nabla^{j}\mathrm{Riem}|\leq D^{2}$ for $j=0,\ldots,k-2$. Then there exists a distance-like function $H\in C^{\infty}(M)$ such that $|\nabla^{j}H|\leq C$, for $j=1,\ldots,k$, with $C$ a positive constant depending only on $m, k, D$.
\end{proposition}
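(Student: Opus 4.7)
\medskip

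\noindent\textbf{Proof proposal.} The plan is to adapt Tam's heat flow approach. Start from the Gaffney--Greene--Wu first-order distance-like function $H_{0}\in C^{\infty}(M)$, which satisfies $C^{-1}r(x)\leq H_{0}(x)\leq Cr(x)$ outside a compact set and $|\nabla H_{0}|\leq 1$ everywhere. Let $u(t,x)$ be the solution of the heat equation $\partial_{t}u=\Delta u$ on $M\times[0,1]$ with $u(0,\cdot)=H_{0}$; existence and uniqueness of a bounded-gradient solution follow from the completeness of $(M,g)$ and the linear theory. Define $H(x):=u(1,x)$. Since $\partial_{t}u=\Delta u$ will be bounded uniformly in space (as a consequence of the estimates below applied for $j=2$), one has $|u(1,x)-H_{0}(x)|\leq C$, so $H$ remains distance-like; it is also smooth. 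The remaining task is to show $|\nabla^{j}H|\leq C(m,k,D)$ for $j=1,\ldots,k$.

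The derivative bounds are obtained by a Bernstein--Shi-type iteration based on the evolution equation
\[
(\partial_{t}-\Delta)\nabla^{j}u=[\nabla^{j},\Delta]u,
\]
where the commutator $[\nabla^{j},\Delta]u$ expands (by repeated use of the Ricci identity) as a finite sum of terms of the schematic form $\nabla^{a}\mathrm{Riem}\ast\nabla^{b}u$ with $a\leq j-2$ and $b\leq j$, at most one factor of $\nabla^{b}u$ with $b=j$ appearing in each term (and in that case $a=0$, paired only linearly). Taking squared norms yields
\[
(\partial_{t}-\Delta)|\nabla^{j}u|^{2}\leq -2|\nabla^{j+1}u|^{2}+C_{j}\bigl(1+|\nabla^{j}u|^{2}\bigr)\sum_{b\leq j-1}\!\bigl(1+|\nabla^{b}u|^{2}\bigr),
\]
the key fact being that under the hypothesis $|\nabla^{a}\mathrm{Riem}|\leq D^{2}$ for $a\leq k-2$, the coefficients in the right-hand side are bounded by constants depending only on $m,k,D$ for every $j\leq k$. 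Proceed by induction on $j$. The base case $j=1$ is the standard Bochner identity, giving $|\nabla u|\leq 1$ on $M\times[0,1]$ by the maximum principle (since $\mathrm{Ric}$ is bounded below). For the inductive step, assume the bounds $|\nabla^{i}u|\leq C_{i}$ for all $i\leq j-1$ and $t\in[0,1]$. Consider the auxiliary function
\[
F_{j}(t,x):=t\,|\nabla^{j}u|^{2}+A_{j}|\nabla^{j-1}u|^{2},
\]
choosing $A_{j}$ large enough, depending only on the previous constants, so that the good term $-2A_{j}|\nabla^{j}u|^{2}$ coming from the evolution of $|\nabla^{j-1}u|^{2}$ dominates the bad contributions in $(\partial_{t}-\Delta)(t|\nabla^{j}u|^{2})$. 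One finds $(\partial_{t}-\Delta)F_{j}\leq C'_{j}$ on $M\times[0,1]$, with $F_{j}(0,\cdot)$ bounded by induction. Applying the maximum principle then yields $F_{j}\leq C''_{j}$, whence $|\nabla^{j}u(1,\cdot)|\leq C$.

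The main technical obstacle is twofold. First, the combinatorial bookkeeping of the commutators $[\nabla^{j},\Delta]$ must be done carefully, in order to verify that only $|\nabla^{a}\mathrm{Riem}|$ for $a\leq j-2\leq k-2$ actually enters the estimate at level $j$, matching the hypothesis; this is the point where the index $k-2$ in the statement becomes sharp. Second, the maximum principle must be justified on a non-compact manifold: the solution $u$ and its derivatives are a priori only locally bounded, so one truly applies Omori--Yau / Karp-type versions (legitimated by the completeness and the uniform curvature bounds available at each stage) or, equivalently, localizes the argument with a cut-off function supported where $F_{j}$ has already been controlled at lower order, and then lets the cut-off exhaust $M$. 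Once the estimates at $t=1$ are established for all $j\leq k$, the function $H=u(1,\cdot)$ has the required properties.
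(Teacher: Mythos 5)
Your strategy is the heat-flow route that the paper attributes to Tam for this proposition; the paper itself does not reprove it, and its own proof of the generalization (Theorem \ref{th_distancefunction}) is elliptic rather than parabolic. The problem with your write-up is the commutator bookkeeping, and it is not a cosmetic slip: the claim that $[\nabla^{j},\Delta]u$ only involves $\nabla^{a}\mathrm{Riem}$ with $a\leq j-2$ is off by one. The correct expansion (this is exactly Lemma \ref{Comm} of the paper) is
\[
\Delta\nabla^{j}u-\nabla^{j}\Delta u=\mathrm{Riem}*\nabla^{j}u+\nabla\mathrm{Ric}*\nabla^{j-1}u+\cdots+\nabla^{j-1}\mathrm{Ric}*\nabla u,
\]
so the estimate at level $j$ requires pointwise control of $|\nabla^{a}\mathrm{Riem}|$ for all $a\leq j-1$. (Already for $j=2$ the term $\nabla_{i}\mathrm{Ric}_{jl}\nabla_{l}u$, coming from differentiating the Bochner commutation $\nabla_{j}\Delta u-\Delta\nabla_{j}u=\mathrm{Ric}*\nabla u$ once more, cannot be made to cancel.) Consequently your Bernstein--Shi iteration closes only for $j\leq k-1$: at the top order $j=k$ the right-hand side contains $\nabla^{k-1}\mathrm{Ric}*\nabla u$, whose coefficient is not bounded under the hypothesis $|\nabla^{a}\mathrm{Riem}|\leq D^{2}$, $a\leq k-2$, and it is linear in $\nabla^{k}u$ with an unbounded coefficient, so it cannot be absorbed by the good terms $-2|\nabla^{k+1}u|^{2}$ or $-2A_{k}|\nabla^{k}u|^{2}$. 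In the base case $k=2$ your scheme would already demand $|\nabla\mathrm{Ric}|\leq C$ to bound the Hessian, which is strictly more than the statement assumes.

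The missing idea is that the last derivative must be extracted by local Schauder-type regularity rather than by the maximum principle: the hypotheses give $C^{k-1,\alpha}$ control of the metric in suitable coordinates (geodesic normal coordinates on the local universal cover under a sectional bound, or harmonic coordinates \`a la Anderson when an injectivity radius bound is available), and interior parabolic (or elliptic) Schauder estimates for the equation $\partial_{t}u=g^{ij}\partial_{i}\partial_{j}u+b^{k}\partial_{k}u$ then upgrade the $C^{k-1}$ bounds on $u$ obtained from your iteration to $C^{k,\alpha}$ bounds, with constants depending only on $m,k,D$. This is precisely the mechanism the paper uses in its own (elliptic) proof of the general statement, where the $k$-th derivative of $h$ is obtained by applying Euclidean Schauder estimates to $\hat g_{\lambda}^{ij}\partial^{2}_{ij}(\partial^{k-2}\hat h)=\hat f$ in a controlled chart, not by a pointwise differential inequality for $|\nabla^{k}h|^{2}$. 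Your remaining points (justifying the maximum principle on a noncompact manifold, and the distance-likeness of $u(1,\cdot)$ via $|\Delta u|\lesssim t^{-1/2}$) are fixable as you indicate, but as written the proof does not establish the top-order bound.
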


Let $\lambda:[0,+\infty)\to[0,+\infty)$ be a $C^\infty$ function. Before stating the first main result of this paper, let us introduce the following assumptions:
\begin{itemize}
\item[(A1)] There exists a constant $R_1>0$ such that $\lambda$ is strictly positive and non-decreasing on $[R_1,+\infty)$. 
\item[(A2)] For every $\delta >0$, there exist constants $R_2=R_2(\delta)>0$ and $M_2=M_2(\delta)>1$ such that 
\[\forall\,t>R_2,\quad M_2(\delta)^{-1}\leq \frac{\lambda(\delta t)}{\lambda(t)}\leq M_2(\delta),\quad \text{and}\quad M_2(\delta)^{-1}\leq \frac{\lambda'(\delta t)}{\lambda'(t)}\leq M_2(\delta).
\]
\item[(A3)] There exist constants $R_{3}>0$ and $M_{3}>1$ such that 
\[\forall\,t>R_{3},\quad M_3^{-1}\leq \frac{t\lambda'(t)}{\lambda(t)}\leq M_3.
\]
Moreover, for every integer $j\geq 1$ we introduce the assumptions
\item[(A4($j$))] There exist constants $R_{4(j)}>0$ and $M_{4(j)}>1$ such that 
\[\forall\,t>R_{4(j)},\quad  \frac{t\lambda^{(j)}(t)}{\lambda^j(t)}\leq M_{4(j)}.
\]
\end{itemize}

\medskip

The first main theorem of this paper is the following.

\begin{theorem}\label{th_distancefunction}
Let $(M, g)$ be a complete Riemannian manifold and $o\in M$ a fixed reference point, $r(x)\doteq \mathrm{dist}(x,o)$. Let $k\geq 2$ be an integer. Let $\lambda$ satisfy the assumptions (A1), (A2), (A3) and (A4($j$)), $j=1,\dots,k$.
	Suppose that one of the following curvature assumptions holds
	\begin{itemize}
		\item[(a)] for some $i_0>0$,
		\[
		\ |\nabla^j\mathrm{Ric}|(x)\leq \lambda(r(x))^{2+j},\ 0\leq j \leq k-2,\quad\mathrm{inj}_{g}(x)\geq \frac{i_0}{\lambda(r(x))}>0\quad\mathrm{on}\,\,M.
		\]
		\item[(b)] 
		\[
		\ |\nabla^j\mathrm{Ric}|(x)\leq \lambda(r(x))^{2+j},\ 1\leq j \leq k-2,\ |\mathrm{Sect}|(x)\leq D^2\lambda(r(x))^2.
		\]
	\end{itemize}
	Then there exists an exhaustion function $H\in C^{\infty}(M)$ such that for some positive constant $C>1$ independent of $x$ and $o$, we have on $M$ that 
	\begin{itemize}
		\item[(i)]$C^{-2}r(x)\leq H(x)\leq \max\left\{r(x), 1\right\}$;
		\item[(ii)] $|\nabla H|(x)\leq 1$;
		\item[(iii)] for $2\leq j \leq k$, $\left|\nabla^{j} H\right|(x)\leq C\max\{\lambda(r(x))^{j-1},1\}$.
	\end{itemize}
\end{theorem}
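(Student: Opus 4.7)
My plan is to localize Tam's construction (Proposition~\ref{prop_Tam}) via a rescaling argument on balls where the geometry becomes bounded, and then to patch the local pieces together with a partition of unity. I would start from the Gaffney--Greene--Wu first-order distance-like function $h\in C^{\infty}(M)$, smooth, with $|\nabla h|\leq 1$ and $C^{-1}r(x)\leq h(x)\leq Cr(x)$ outside a compact set $K_{0}$. Setting $\rho(x)\doteq c/\lambda(r(x))$ for a small $c>0$, I would pick a countable covering $\{B(x_{i},\rho_{i})\}_{i\in\mathbb{N}}$ of $M\setminus K_{0}$ with $\rho_{i}\doteq\rho(x_{i})$ and uniformly bounded multiplicity. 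By (A2) and the Lipschitz character of $r$, the quantities $\lambda_{i}\doteq\lambda(r(x_{i}))$ are comparable to $\lambda(r(y))$ for $y\in B(x_{i},\rho_{i})$ and for neighbouring indices.

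On each ball $B_{i}\doteq B(x_{i},2\rho_{i})$ I would consider the rescaled metric $\tilde g_{i}\doteq \lambda_{i}^{2} g$. In this metric $B_{i}$ has radius comparable to $c$, and the assumptions in (a) or (b) become \emph{uniform} bounds: $|\widetilde\nabla^{j}\widetilde{\mathrm{Ric}}|_{\tilde g_{i}}\leq C$ for $0\leq j\leq k-2$, together with either $\widetilde{\mathrm{inj}}_{\tilde g_{i}}(x_{i})\geq c\,i_{0}$ (case (a)) or $|\widetilde{\mathrm{Sect}}|_{\tilde g_{i}}\leq CD^{2}$ (case (b)); the curvature rescalings follow from the fact that, under a constant conformal change, the Christoffel symbols are unchanged and only the index-raising produces factors of $\lambda_{i}^{-2}$. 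By classical harmonic radius estimates one obtains harmonic charts $\varphi_{i}:B(x_{i},\rho_{i})\to\mathbb{R}^{m}$ on which the components of $\tilde g_{i}$ are $C^{k-1,\alpha}$-close to the Euclidean metric with uniform constants. I would then introduce $\tilde h_{i}\doteq\lambda_{i}(h-h(x_{i}))$, which in the rescaled metric is $1$-Lipschitz and of size $\leq C$ on $B_{i}$. Mollifying $\tilde h_{i}$ in the chart at a scale of order one yields a smooth $\tilde H_{i}$ on $B(x_{i},\rho_{i})$ with
\[
|\widetilde\nabla^{j}\tilde H_{i}|_{\tilde g_{i}}\leq C,\quad 1\leq j\leq k,\qquad |\tilde H_{i}-\tilde h_{i}|\leq C.
\]
Setting $F_{i}\doteq\lambda_{i}^{-1}\tilde H_{i}+h(x_{i})$ and using the identity $|\nabla_{g}^{j} u|_{g}=\lambda_{i}^{j}|\widetilde\nabla^{j} u|_{\tilde g_{i}}$, valid for functions under a constant conformal change of metric, gives $|\nabla^{j} F_{i}|_{g}\leq C\lambda_{i}^{j-1}$ for $1\leq j\leq k$, together with $|F_{i}-h|\leq C\lambda_{i}^{-1}$ on $B_{i}$.

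Finally, I would take a partition of unity $\{\eta_{i}\}$ subordinate to $\{B(x_{i},\rho_{i})\}$ with $|\nabla^{j}\eta_{i}|_{g}\leq C_{j}\lambda_{i}^{j}$ (obtained, e.g., from $\zeta(\mathrm{dist}(\cdot,x_{i})/\rho_{i})$ smoothed by the same local procedure, using (A4($j$)) to differentiate $\lambda\circ r$), and define $H\doteq\sum_{i}\eta_{i}F_{i}$, suitably modified on $K_{0}$. For $x\in M\setminus K_{0}$ and any index $i(x)$ with $x\in B(x_{i(x)},\rho_{i(x)})$, the identity $\sum_{i}\eta_{i}\equiv 1$ implies
\[
H(x)-h(x_{i(x)})=\sum_{i}\eta_{i}(x)\bigl(F_{i}(x)-h(x_{i(x)})\bigr),
\]
and the top-order contribution $\sum_{i}(F_{i}-h(x_{i(x)}))\nabla^{j}\eta_{i}$ is then controlled by $C\lambda_{i}^{-1}\cdot\lambda_{i}^{j}=C\lambda_{i}^{j-1}$, thanks to the mollification estimate and the Lipschitz character of $h$; the remaining product-rule terms are bounded analogously, yielding $|\nabla^{j}H|_{g}\leq C\max\{\lambda(r)^{j-1},1\}$ for $2\leq j\leq k$. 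A last global renormalization recovers also (i) and (ii). The main obstacle is precisely the top-order cancellation just sketched: a naive application of the product rule produces a forbidden $r\cdot\lambda^{j}$ contribution, and one has to exploit both the partition-of-unity relation $\sum_{i}\nabla^{j}\eta_{i}=0$ and the closeness $|F_{i}-h|\lesssim\lambda_{i}^{-1}$ coming from mollification; assumptions (A2) and (A4($j$)) are exactly what makes rigorous the comparisons between $\lambda_{i}$ on overlapping balls and the repeated differentiation of $\lambda\circ r$ without loss of regularity.
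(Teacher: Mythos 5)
Your construction is a genuinely different route from the paper's. The paper argues by induction on $k$: it produces a function $h$ solving the nonlinear elliptic equation $\Delta h=|\nabla h|^2-C_\theta(\theta'(H_{k-1}))^2$ (a generalization of Bianchi--Setti, Theorem \ref{refinedBS}), differentiates the equation $k-2$ times in rescaled harmonic coordinates, and bootstraps via Euclidean Schauder and Calder\'on--Zygmund estimates before setting $H=\theta^{-1}\circ h$. You instead propose a one-shot localization: mollify the Gaffney function on balls of radius $\sim\lambda^{-1}$ in rescaled harmonic charts and glue with a partition of unity. For case (a) your scheme is sound and arguably more elementary: the rescaling identity $|\nabla^j_g u|_g=\lambda_i^{j}|\widetilde\nabla^j u|_{\tilde g_i}$ is correct, the intermediate product-rule terms $\nabla^l\eta_i\otimes\nabla^{j-l}F_i$ with $1\le l\le j-1$ are already of size $\lambda^{l}\cdot\lambda^{j-l-1}=\lambda^{j-1}$ without any cancellation, and you correctly isolate the only dangerous term $\sum_i F_i\,\nabla^j\eta_i$ and kill it with $\sum_i\nabla^j\eta_i=0$ together with $|F_i-h|\lesssim\lambda_i^{-1}$. (The partition of unity with $|\nabla^j\eta_i|\lesssim\lambda_i^j$ up to $j=k$ should be built from Euclidean bumps in the harmonic charts rather than from $\zeta(\mathrm{dist}(\cdot,x_i)/\rho_i)$, since $\mathrm{dist}$ is not smooth; this is routine but it means the partition of unity itself relies on the charts.)

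The genuine gap is case (b). Every step of your argument --- the harmonic charts with uniform $C^{k-1,\alpha}$ control of the metric, the mollification performed inside those charts, and the controlled partition of unity --- requires a uniform lower bound on the injectivity radius of the rescaled metrics $\tilde g_i$, which Anderson's harmonic radius estimate needs as an input. In case (b) there is no injectivity radius hypothesis at all, and $\mathrm{inj}_g$ may decay arbitrarily fast even though $|\mathrm{Sect}|\le D^2\lambda^2$. The paper's workaround is the localized Cartan--Hadamard theorem: it lifts to a local cover $F:B^{\bar g}_R(\bar x)\to B^g_R(x)$ where the injectivity radius at the center is controlled, and transfers \emph{pointwise} derivative estimates for the solution $h\circ F$ of the lifted PDE back down through the local isometry. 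That trick is not available to you: a function mollified upstairs in a chart on the cover does not descend to $M$ (the local cover is not injective, and there is no group action to average over), and an intrinsic Greene--Wu convolution $\int_{T_xM}h(\exp_x v)\chi_\epsilon(|v|)\,dv$ would require bounds on $|\nabla^j\mathrm{Riem}|$ (through derivatives of Jacobi fields) to control derivatives of order $j\ge 3$, which case (b) does not provide --- only $|\nabla^j\mathrm{Ric}|$ is assumed, and this controls the metric only in harmonic gauge. So as written your proof covers case (a) but not case (b).
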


More comments about assumptions (A1) to (A4) will be given in Subsection \ref{subsec_assumptions}. For the moment let us only point out that admissible choices for $\lambda$ which are relevant in the applications are, for instance, constant multiples of
\[
\lambda(r(x))=r(x)^\eta,\ \eta\geq 0,\qquad\text{or}\qquad \lambda(r(x))=
r(x)\prod_{i=1}^{\bar{i}}\ln^{[i]}(r(x))\]
where $\ln^{[i]}$ stands for the $i$-th iterated logarithm (e.g. $\ln^{[2]}(t)=\ln\ln t$, etc.) and $\bar{i}$ is some positive integer. See Lemma \ref{l: lambda} for more details. 
\medskip

Theorem \ref{th_distancefunction} extends the previously known results in several directions. Namely note that:
\begin{itemize}
	\item The Proposition \ref{prop_Tam} due to Tam is still true if we assume only $|\nabla^{j}\mathrm{Ric}|\leq D^{2}$ and either a uniform bound on all the sectional curvatures or the positivity of the injectivity radius;    
	\item as in \cite{IRV-HessCutOff}, we can assume a controlled growth of the curvatures and\slash{}or a controlled decay of the injectivity radius and get an estimate for the growth of the derivatives, which is enough for some important applications, as for instance for proving the density of compactly supported functions in Sobolev spaces;
	\item we can admit more general (non-necessarily sub-polynomial) growth functions. This generalizes also the second order result obtained in \cite{IRV-HessCutOff}, and permits to get consequences which are (in a sense) sharp. See for instance Remark \ref{rmk sharp_sob} and Subsection \ref{subsec_OY}.
\end{itemize} 
 
We present here the strategy of the proof of Theorem \ref{th_distancefunction} in the special easy case where $\lambda(r(x))=r(x)$. Reasoning by induction, suppose that we have a distance-like function $H_{k-1}$ which satisfies $|\nabla^{j} H_{k-1}|\leq r(x)^{j-1}$ outside a compact set for $j=1,\dots,k-1$. According to (a generalization of) a theorem due to D. Bianchi and A. G. Setti, \cite{BianchiSetti}, we can prove the existence of a smooth function $h$ such that
\begin{itemize}
	\item[(i)] $C_h^{-1}r^2(x)\leq h(x) \leq C_hr^2(x)$;
	\item[(ii)] $|\nabla h|\leq C_h r(x)$;
	\item[(iii)] $\Delta h = |\nabla h|^2-C_\theta H_{k-1}^2(x)$,
\end{itemize}
outside a compact set; see Theorem \ref{refinedBS}. Suppose that we already have obtained an estimate for $|\nabla^j h|$ for $j=2,\dots, k-1$ (formally, this is done via a second induction process). Derivating the equation at (iii), we get 
\begin{equation}\label{eq intro}
\nabla^{k-2}\Delta h = \nabla^{k-2}|\nabla h|^2-C_\theta \nabla^{k-2} H_{k-1}^2(x).
\end{equation}
The curvature and injectivity radius assumptions, together with an appropriate local rescaling of the metric, guarantee the existence of a harmonic atlas with respect to which the metric is uniformly controlled in $C^{k-1,\alpha}$. Accordingly, in any local harmonic coordinate chart $\Omega$, equation \eqref{eq intro} writes
$\Delta_0 \nabla^{k-2} h = F$ on $\Omega$, where $\Delta_0$ is the Euclidean Laplacian and $F$ a certain (vector-valued) function which depends on $\|g\|_{C^{k-1,\alpha}(\Omega)}$, $\|h\|_{C^{k-1}(\Omega)}$ and $\|H_{k-1}\|_{C^{k-1}(\Omega)}$. Since all these three norms are controlled, we are in the position to implement standard Euclidean elliptic theory and deduce a 
$C^{2,\alpha}$ control on $\nabla^{k-2} h$, that is, a $C^{k,\alpha}$ control on $\nabla^k h$. The desired function $H$ is finally obtained letting $H=h^2$. 

\subsection{The density property for Sobolev spaces}\label{Densprop}

As alluded to above, distance-like functions guarantee that the underlying manifold $M$ is not too different from the Euclidean space (in a suitable sense). Namely, when $M$ supports a distance-like function with controlled derivatives, certain properties and tools from classic analysis on $\R^n$ can be proved to have a Riemannian counterpart on $M$. In this sense, one of the main applications is to density results of smooth compactly supported functions in Sobolev spaces, 
as probably first observed in \cite{Guneysu}, \cite{GuneysuPigola}.

Given $(M^{m}, g)$ a smooth, complete, possibly non-compact Riemannian manifold without boundary, $k$ an integer, and $p\geq 1$, one can define the Sobolev space $W^{k,p}(M)$ as the space of functions on $M$ all of whose (weak) derivatives of order $0$ to $k$ have finite $L^{p}$ norm. By a generalised Meyers-Serrin-type theorem (see e.g. \cite{GuidettiGuneysuPallar}) this coincides with the completion of the space 
\[
\mathcal{C}_{k}^{p}(M)\doteq \left\{u\in C^{\infty}(M): \int_{M}|\nabla^{j}u|^{p}d\mathrm{vol}<+\infty,\,\,\forall\,j=0,\ldots,k\right\}
\]
with respect to the norm
\[
\ \left\|u\right\|_{k,p}=\sum_{j=0}^{k}\left(\int_{M}|\nabla^{j}u|^{p}d\mathrm{vol}\right)^{\frac{1}{p}}.
\]
Moreover, we can define $W_{0}^{k,p}(M)\subseteq W^{k,p}(M)$ as the closure of the space of smooth compactly supported functions $C_{c}^{\infty}(M)$ with respect to the same norm.
\medskip

In case $M=\R^m$, it is well known that $W_{0}^{k,p}(\mathbb{R}^{m})=W^{k,p}(\mathbb{R}^{m})$ for all $k$ integer and $p\in[1,\infty)$, so that one could expect the same equivalence to hold true also on complete Riemannian manifold $M$. However, quite surprisingly, the problem seems to remain open in general so far; see for instance \cite[p. 49]{HebeyCourant}.

\begin{problem}
Given an arbitrary complete Riemannian manifold $(M,g)$, is it true that
\begin{equation}\label{eq_dens}
W_{0}^{k,p}(M)=W^{k,p}(M)
\end{equation} 
for all integer $k\geq 0$ and $p\in[1,\infty)$?
\end{problem}

Of course, the answer is positive under certain additional assumptions. First, \eqref{eq_dens} is satisfied for all $k$ and $p$ when $M$ is compact; see for instance \cite[Theorem 2.9]{Aubin}. Concerning complete non-compact manifolds, it is a standard fact that $W_{0}^{0,p}(M)=W^{0,p}(M)=L^p(M)$, and with some effort one can also prove that $W_{0}^{1,p}(M)=W^{1,p}(M)$ for all $k$ and $p\in[1,\infty)$; \cite{aubin-bull}. 

Regarding the first non-trivial order, i.e. $k=2$, in our previous work \cite{IRV-HessCutOff} we showed that $W_{0}^{2,p}(M)=W^{2,p}(M)$, $p\in[1,+\infty)$,  for complete manifolds with either a sub-quadratic growth of the norm of the Riemann curvature, or a subquadratic growth of both the norm of the Ricci curvature and the squared inverse of the injectivity radius. Previously known results, obtained in \cite{HebeyCourant}, \cite{Guneysu-Book}, were assuming uniform constant bounds on either the Ricci tensor and the injectivity radius or the Riemann tensor. As far as concerns higher order Sobolev spaces the most up to date result is  the following proposition due to E. Hebey, \cite{HebeyCourant}.

\begin{proposition}[Proposition 3.2 in \cite{HebeyCourant}]\label{PropHebey}
Let $(M, g)$ be a smooth, complete Riemannian manifold with positive injectivity radius, and let $k\geq 3$ be an integer. We assume that for $j=0,\ldots, k-2$, $|\nabla^{j}\mathrm{Ric}|$ is bounded. Then for any $p\in[1,\infty)$, $W_{0}^{k,p}(M)=W^{k, p}(M)$. 
\end{proposition}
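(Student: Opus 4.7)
My strategy combines Theorem \ref{th_distancefunction} with the classical cut-off plus local mollification scheme. First, I invoke Theorem \ref{th_distancefunction}, case (a), with $\lambda\equiv D$ a sufficiently large positive constant which absorbs both the bounds on $|\nabla^j\mathrm{Ric}|$ for $j=0,\dots,k-2$ and the positivity of the injectivity radius; a constant $\lambda$ trivially satisfies (A1)–(A4($j$)) for every $j$. The conclusion is a smooth exhaustion $H\in C^\infty(M)$ with
\[
|\nabla H|\leq 1,\qquad |\nabla^j H|\leq C\quad\text{for } 2\leq j\leq k.
\]

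Next, I fix $\phi\in C_c^\infty(\mathbb{R})$, $0\leq\phi\leq 1$, equal to $1$ on $[0,1]$ and vanishing outside $[0,2]$, and set $\chi_n:=\phi(H/n)$. Since $H$ is an exhaustion, $\chi_n\in C_c^\infty(M)$ and $\chi_n\to 1$ pointwise as $n\to\infty$. Iterating the chain and Leibniz rules (Fa\`a di Bruno) shows that $\nabla^j\chi_n$ is a finite sum of terms of the form $n^{-l}\phi^{(l)}(H/n)$ multiplied by a tensor product $\nabla^{a_1}H\otimes\cdots\otimes\nabla^{a_l}H$ with $1\leq l\leq j$ and $a_1+\cdots+a_l=j$. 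Combined with the bounds of Step 1 this gives the crucial estimate
\[
|\nabla^j\chi_n|\leq \frac{C'_j}{n},\qquad j=1,\dots,k,\ n\geq 1.
\]
Given $u\in W^{k,p}(M)$, the Leibniz rule yields
\[
\nabla^j(\chi_n u)-\chi_n\nabla^j u=\sum_{i=1}^{j}\binom{j}{i}\,\nabla^i\chi_n\otimes\nabla^{j-i}u,
\]
whose right-hand side has $L^p$ norm bounded by $(C''_j/n)\|u\|_{k,p}\to 0$. Together with $\chi_n\nabla^j u\to \nabla^j u$ in $L^p$ (dominated convergence, using $|\chi_n|\leq 1$), this shows $\chi_n u\to u$ in $W^{k,p}(M)$.

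Finally, each $\chi_n u\in W^{k,p}(M)$ is compactly supported, so a standard partition-of-unity plus Euclidean-mollification argument, that is a compactly supported Meyers–Serrin on the manifold which requires no curvature hypothesis (see e.g.\ \cite{GuidettiGuneysuPallar}), approximates $\chi_n u$ in $W^{k,p}$ by functions in $C_c^\infty(M)$. A diagonal extraction then yields $u\in W_0^{k,p}(M)$. All the delicate geometric analysis is encapsulated in Theorem \ref{th_distancefunction}; the main conceptual point is that Step 2 uses the full set of bounds $|\nabla^j H|\leq C$ for $1\leq j\leq k$, which is exactly what was out of reach by previous methods for $k\geq 3$.
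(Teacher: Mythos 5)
Your overall architecture (build a $k$-th order distance-like function with controlled derivatives, form cut-offs $\chi_n=\phi(H/n)$, show $\chi_n u\to u$ in $W^{k,p}$, then mollify) is exactly the strategy this paper uses for its generalization, Theorem \ref{th_main1}. However, there is a genuine gap at your very first step: a constant $\lambda\equiv D$ does \emph{not} satisfy the hypotheses of Theorem \ref{th_distancefunction}. Assumption (A3) requires $t\lambda'(t)/\lambda(t)\geq M_3^{-1}>0$ for large $t$, which forces $\lambda'>0$ and is violated by any constant (the second inequality in (A2) is likewise vacuous, since $\lambda'\equiv 0$ makes the ratio $\lambda'(\delta t)/\lambda'(t)$ undefined). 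So you cannot invoke Theorem \ref{th_distancefunction} to produce an $H$ with \emph{uniformly} bounded $|\nabla^jH|$, $2\leq j\leq k$; the theorem as stated never yields uniform bounds, only bounds of the form $C\max\{\lambda(r)^{j-1},1\}$ with $\lambda$ necessarily (by (A3)) growing. The bounded-geometry existence statement you want is the content of Proposition \ref{prop_Tam} (which assumes bounds on $|\nabla^j\mathrm{Riem}|$, not $|\nabla^j\mathrm{Ric}|$ plus injectivity radius) and of an unproved remark in the introduction; it is not a formal consequence of Theorem \ref{th_distancefunction}.

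The fix is cheap but changes the argument: choose an admissible growing weight, e.g.\ $\lambda(t)=A(1+t^2)^{1/(2(k-1))}$ with $A\geq\max\{D,1\}$. This satisfies (A1)--(A4($j$)), dominates the constant curvature bounds (so $|\nabla^j\mathrm{Ric}|\leq D^2\leq\lambda(r)^{2+j}$ and $\mathrm{inj}_g\geq i_0\geq i_0/\lambda(r)$), and obeys $\lambda^{1-k}\sim C/t\notin L^1([1,\infty))$. Then the Proposition is literally a special case of Theorem \ref{th_main1}. If you insist on running your own cut-off computation, note that with this $\lambda$ your naive $\chi_n=\phi(H/n)$ still works: on $\mathrm{supp}\,\phi^{(l)}(H/n)$ one has $r\leq Cn$, hence $n^{-l}\prod_i|\nabla^{a_i}H|\leq Cn^{-l}\lambda(Cn)^{j-l}\leq Cn^{(j-lk)/(k-1)}\leq C$ for $l\geq1$, $j\leq k$; the terms then vanish in $L^p$ by dominated convergence since they are supported in $\{H\geq n\}$. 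The remainder of your argument (Leibniz expansion, dominated convergence, and the final Meyers--Serrin mollification, which you perform after cutting off rather than before as the paper does) is sound.
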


As a first main application of Theorem \ref{th_distancefunction}, we will prove the density property on complete Riemannian manifolds with a suitable growth condition on the derivatives of the Ricci tensor and either the Riemannian curvature tensor or the inverse of the injectivity radius. The admissible growth rate depends explicitly on the order of the Sobolev space we are dealing with.  Note that, contrary to Proposition \ref{PropHebey}, the following Theorem \ref{th_main1} concerns also manifolds with possibly unbounded geometry. 

\begin{theorem}\label{th_main1}
Let $k\geq 2$. In the assumptions of Theorem \ref{th_distancefunction}, if we further assume that $\lambda^{1-k}\not\in L^1([1,+\infty))$, then we have that $W^{k,p}(M)=W^{k,p}_0(M)$ for all $p\in[1,+\infty)$.
\end{theorem}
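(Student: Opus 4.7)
The strategy follows the classical path: by the generalized Meyers--Serrin result cited after Proposition \ref{PropHebey}, it suffices to show that every $u\in \mathcal{C}^p_k(M)$ can be approximated in $W^{k,p}$-norm by functions of the form $\chi_n u$ with $\chi_n$ smooth and compactly supported. The key requirement on the cut-offs is that $|\nabla^j \chi_n|$ be bounded independently of $n$ (actually, tend to zero uniformly) for $j=1,\dots,k$, so that the Leibniz expansion
\[
\nabla^j(\chi_n u - u) = (\chi_n - 1)\nabla^j u + \sum_{i=1}^{j}\binom{j}{i}\nabla^i \chi_n \otimes \nabla^{j-i} u
\]
converges to $0$ in $L^p$ by dominated convergence: the first summand is pointwise dominated by $2|\nabla^j u|$, and the remaining summands by a multiple of $\sum_{i<j}|\nabla^i u|/n$.

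The construction of $\chi_n$ is the heart of the matter. Let $H$ be the distance-like function furnished by Theorem \ref{th_distancefunction}, and set
\[
f(t)=\int_{1}^{t}\lambda(s)^{1-k}\,ds, \qquad \Phi:=f\circ H,
\]
so that $\Phi$ is a genuine exhaustion of $M$ precisely thanks to the hypothesis $\lambda^{1-k}\notin L^1([1,+\infty))$ together with $H\geq C^{-2}r$. Fix $\alpha\in C^\infty_c(\R)$ with $\alpha\equiv 1$ on $[-1,1]$ and $\alpha\equiv 0$ outside $[-2,2]$, and define $\chi_n(x):=\alpha(\Phi(x)/n)$. One application of the Fa\`a di Bruno formula gives
\[
\nabla^j \chi_n = \sum_{\pi\in P_j}\alpha^{(|\pi|)}(\Phi/n)\,n^{-|\pi|}\prod_{B\in \pi}\nabla^{|B|}\Phi,
\]
so the whole problem is reduced to showing that $|\nabla^j \Phi|$ is uniformly bounded on $M$ for $j=1,\dots,k$, which would give $|\nabla^j\chi_n|\leq C/n$.

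This last uniform bound is the main technical step. A second Fa\`a di Bruno expansion yields
\[
\nabla^j \Phi=\sum_{\pi\in P_j} f^{(|\pi|)}(H)\prod_{B\in \pi}\nabla^{|B|}H,
\]
so by property (iii) of Theorem \ref{th_distancefunction} it suffices to prove $|f^{(s)}(t)|\leq C_s\,\lambda(t)^{s-k}$ for $1\leq s\leq k$. For $s=1$ this is an equality; for $s\geq 2$, I would expand $f'=\lambda^{1-k}$ via Fa\`a di Bruno (viewing it as the composition of $x\mapsto x^{1-k}$ with $\lambda$) to produce terms of the shape $\lambda(t)^{1-k-|\sigma|}\prod_{C\in\sigma}\lambda^{(|C|)}(t)$. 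Assumption (A4($|C|$)) bounds each factor $\lambda^{(|C|)}(t)$ by $M\,\lambda(t)^{|C|}/t$, so the whole term is dominated by $\lambda(t)^{s-k}\cdot(t\lambda(t))^{-|\sigma|}$. Assumptions (A1)--(A3) force $t\lambda(t)$ to be bounded below, closing the estimate.

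The main obstacle is precisely this Fa\`a di Bruno bookkeeping: checking that at each order one correctly collects all the partition-terms and that the powers of $\lambda$ and the factors of $1/t$ combine so as to yield exactly $\lambda(t)^{s-k}$, as predicted by the scaling of $f$. The hypothesis $\lambda^{1-k}\notin L^1$ plays a purely qualitative role (it only serves to make $\Phi$ an exhaustion), whereas all the quantitative work is done by the compounded effect of the assumptions (A4($j$)) through the double composition $\alpha\circ f\circ H$.
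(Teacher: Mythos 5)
Your argument is correct and follows the same overall strategy as the paper: use the distance-like function $H$ of Theorem \ref{th_distancefunction} to build $k$-th order cut-offs whose real profile has $s$-th derivative decaying fast enough to compensate the growth $\lambda^{s-1}$ of $\nabla^s H$, with $\lambda^{1-k}\notin L^1$ being exactly what makes such a profile possible, and then conclude by the Leibniz expansion and dominated convergence (this is the content of Corollary \ref{kCutOff} and of the proof in Subsection \ref{ProofDens}). Where you differ is in the packaging of the one-dimensional construction: the paper builds $\phi_R$ directly with $\phi_R'\sim-\lambda^{1-k}$, gluing smooth transition factors $\zeta$ and fixing the matching constants $q_R,Q_R$ by a continuity argument, and then proves the \emph{uniform} bound $|\phi_R^{(j)}|\leq C\lambda^{1-k}$; you instead pass to the reparametrized exhaustion $\Phi=f\circ H$ with $f(t)=\int_1^t\lambda^{1-k}$ and compose with a fixed dilated bump $\alpha(\cdot/n)$, aiming for the \emph{graded} bound $|f^{(s)}|\leq C\lambda^{s-k}$. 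Your version is tidier: keeping the factors $t^{-|\sigma|}$ coming from (A4($j$)) and absorbing them into $(t\lambda)^{-|\sigma|}$ makes the Fa\`a di Bruno bookkeeping close without strain, and you additionally get $|\nabla^j\chi_n|\leq C/n$ rather than mere uniform boundedness. Two small points should be made explicit to match the paper's level of care: (i) the Fa\`a di Bruno expansion of $\nabla^j\Phi$ produces $f^{(|\pi|)}(H)$ against derivatives of $H$ estimated in terms of $\lambda(r)$, so you must invoke (A2) together with $C^{-2}r\leq H\leq\max\{r,1\}$ to replace $\lambda(H)$ by $\lambda(r)$ (the resulting $\lambda(r)^{j-k}$ is then bounded because $\lambda$ is eventually bounded away from zero by (A1)); and (ii) $f$ must be well defined and smooth on the whole range of $H$, whereas (A1) only guarantees $\lambda>0$ on $[R_1,+\infty)$ — start the integral at a point past $R_1$ and note that the estimates are only needed on the annuli $\{n\leq\Phi\leq 2n\}$, which lie outside a fixed compact set for $n$ large.
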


\begin{remark}\label{rmk sharp_sob}{\rm 
Choosing the function $\lambda$ in Theorem \ref{th_main1} in essentially the best admissible way, we get that $W^{k,p}(M)=W^{k,p}_0(M)$ for all $p\in[1,+\infty)$ for instance if 	\[
\ |\nabla^j\mathrm{Ric}|(x)\leq \left(r(x)\prod_{i=1}^{\bar{i}}\ln^{[i]}(r(x))
\right)^{(2+j)/(k-1)},\ 0\leq j \leq k-2,\] 
and either
\begin{itemize}
			\item[(a)] for some $i_0>0$,
$\mathrm{inj}(x)\geq i_{0}\left(r(x)\prod_{i=1}^{\bar{i}}\ln^{[i]}(r(x))\right)^{-1/(k-1)}>0$, or
	\item[(b)] for some $D>0$,
$|\mathrm{Riem}|(x)\leq D^2\left(r(x)\prod_{i=1}^{\bar{i}}\ln^{[i]}(r(x))\right)^{2/(k-1)}.$
\end{itemize}
Accordingly, this theorem improves also our previous \cite[Theorem 1.4]{IRV-HessCutOff} when $k=2$, permitting to achieve the density result $W_{0}^{2,p}(M)=W^{2, p}(M)$ on manifolds whose curvatures can grow more than quadratically.}\end{remark} 

To get a flavour of the proof of Theorem \ref{th_main1}, consider here the easy case $k=3$ and $\lambda(r) =r^{1/2}$ (so that $\lambda^{1-k}\not\in L^1([1,+\infty))$). Following a standard construction, let $\phi_{R}\in C^\infty_c(\R_+)$ be a family of cut-off functions such that $\phi_R\equiv 1$ on $[0,R]$ and $|\phi^{(j)}_R|\leq CR^{-j}\mathds{1}_{[R,2R]}$. Let $H$ be the 3rd-order distance-like function given by Theorem \ref{th_distancefunction} and define $\chi_{R}\doteq\phi_R\circ H$. Then $\{\chi_R\}_{R>1}$ is a family of 3rd-order cut-off functions, namely $|\nabla^j\chi_R|$ are uniformly bounded as $R\to\infty$ for $j=1,2,3$. Once one has 3rd-order cut-off functions, it is a standard fact that $f\in W^{3,p}(M)$ can be approached by $\chi_R f$ in $W^{3,p}(M)$-norm. In this process, the main term to control is of the form $|\phi_R'(H(x))\nabla^3H(x)|\lesssim R^{-1}\lambda^2 = R^{-1}R=1$. Here, this latter control was deduced from uniform estimates which we have separately on $\phi_R'(H(x))$ and on $\nabla^3H(x)$ in $B_{2R}(o)\setminus B_{R}(o)$. One can look instead for a more careful point-wise control. Namely one imposes that $|\phi_R'(H(x))|$ is smaller and smaller as $H(x)\to\infty$ in order to compensate the growth of $\nabla^3H(x)$. This leads us to a more careful construction of the real cut-offs $\phi_R$, in such a way that $|\phi_R'(H(x))| \sim \frac{1}{\lambda^2(H(x))}\sim \frac{1}{|\nabla^3H(x)|}$. This can be done as soon as $\lambda^{1-k}\not\in L^1(+\infty)$, and permits us to get the results under the sharper assumptions which we described in Remark \ref{rmk sharp_sob}.

\subsection{Sampson-Lichnerowicz Laplacian and the special case $W^{k,2}(M)$}
In the particular case $p=2$, the assumptions of Theorem \ref{th_main1} can be weakened. For $k=2$, L. Bandara \cite{Bandara} proved that $W^{2,2}(M)=W^{2,2}_0(M)$ under the sole lower bound $\mathrm{Ric}(x)\geq -C$ for some constant $C>0$. In \cite{IRV-HessCutOff}, we pointed out that in fact $\mathrm{Ric}(x)\geq -Cr^2(x)$ suffices. The point is that in these weaker assumptions one can still prove the existence of a distance-like function $H$ with $|\Delta H|\leq C r^2$, and hence of Laplacian cut-offs with $|\Delta \chi_n|$ uniformly bounded, \cite{BianchiSetti}. Given $f\in W^{2,2}(M)$, the Weitzenb\"ock formula applied to the Hodge Laplacian acting on the (skew-symmetric) one form $d(\chi_n f)$ permit to control $\|f\nabla^2 \chi_n\|_{L^2}$ in terms of $\|f\Delta \chi_n\|_{L^2}$, and thus to prove that $\chi_n f \to f$ in $W^{2,2}(M)$. Generalizing this approach to higher order Sobolev spaces is non-trivial. Indeed, one is led to apply Bochner techniques to the $(0,k-1)$-tensor $\nabla^{k-1}(\chi_n f)$, which is very far from being skew-symmetric when $k>2$. Accordingly, the Bochner formula for the Hodge Laplacian acting on $k$-forms can not be exploited. However, it turns out that $\nabla^2(\chi_n f)=\mathrm{Hess}\,(\chi_n f)$ is symmetric. This is no more true for $k>3$, but $\nabla^{k-1}(\chi_n f)$ remains almost symmetric, in the sense that it has a symmetric principal term, plus lower order terms which can be controlled. Thus, one can apply an analogous technique to J. H. Sampson's Laplacian $\Delta_{\mathrm{Sym}}$, \cite{Sampson}. This latter is a Laplace operator acting on the space of symmetric $(0,k)$-tensors which 
\begin{itemize} 
	\item[(a)] is a Lichnerowicz Laplacian, i.e. it satisfies a Weitzenb\"ock formula \begin{equation*}
	\Delta_{L}=\Delta_{B}+c\mathfrak{Ric},
	\end{equation*}
	where $c$ is a suitable (in this case negative) constant, $\Delta_{B}$ is the Bochner Laplacian and $\mathfrak{Ric}$ is the Weitzenb\"ock curvature operator (see Subsection \ref{subsec_Weitz} for more details).
	\item[(b)] has a Hodge-type decomposition $\Delta_{\mathrm{Sym}}=D_{S}^{*}D_{S}-D_{S}D_{S}^{*}$, where  $D_{S}$ is the symmetrized covariant derivative, and $D_{S}^{*}$ its formal adjoint.
\end{itemize}
Exploiting this, one can get a control on the $L^2$-norm $\|f\nabla^k \chi_n\|_{L^2}$ in terms of $\|f\Delta \nabla^{k-2}\chi_n\|_{L^2}$. Choosing the cut-offs $\chi_{n}$ to be what we called $k$-th order (rough) Laplacian cut-offs, this latter term can be estimated under assumptions which are weaker than the assumptions of Theorem \ref{th_main1} (in particular we do not need to control the highest order derivatives of the curvature). More precisely, in Section \ref{p2} we will prove the following. 

\begin{theorem}\label{Dens_p=2}
Let $(M, g)$ be a complete Riemannian manifold and $o\in M$ a fixed reference point $r(x)\doteq\mathrm{dist}_{g}(x,o)$. Let $k\geq 3$ be an integer. Let $\lambda$ satisfy assumptions (A1), (A2), (A3), and (A4(j)) for $j=1,\ldots,k-1$ above, and suppose that $\lambda^{1-k}\notin L^{1}([1,+\infty))$. Suppose that 
\begin{equation*}
|\nabla^{j}\mathrm{Riem}|\leq \lambda(r(x))^{2+j},\qquad 0\leq j\leq k-3.
\end{equation*}
Then we have that $W_{0}^{k,2}(M)=W^{k,2}(M)$.
\end{theorem}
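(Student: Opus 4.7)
The plan is to adapt the cut-off-based proof of Theorem \ref{th_main1}, replacing the missing pointwise control on $\nabla^{k}H$ by an $L^{2}$-control derived from the Sampson-Lichnerowicz Weitzenb\"ock formula. First observe that the hypothesis $|\nabla^{j}\mathrm{Riem}|\leq \lambda(r)^{2+j}$ for $0\leq j\leq k-3$ yields in particular $|\mathrm{Sect}|\leq C\lambda^{2}$ and $|\nabla^{j}\mathrm{Ric}|\leq C\lambda^{2+j}$ for $1\leq j\leq k-3$, so Theorem \ref{th_distancefunction}(b) applies at order $k-1$ (with $(k-1)-2 = k-3$). This furnishes a smooth distance-like function $H$ with $|\nabla H|\leq 1$ and $|\nabla^{j}H|\leq C\max\{\lambda(r)^{j-1},1\}$ for $2\leq j\leq k-1$; crucially, $\nabla^{k}H$ is not available.

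Next I would define cut-offs $\chi_{n}:=\phi_{n}\circ H$ where $\phi_{n}\in C_{c}^{\infty}([0,+\infty))$ is chosen in the spirit of the pointwise-weighted construction sketched after Remark \ref{rmk sharp_sob}: each $|\phi_{n}^{(j)}(t)|$ decays in $t$ like an appropriate negative power of $\lambda(t)$ adapted to the growth of $|\nabla^{j}H|$, and the divergence $\lambda^{1-k}\notin L^{1}$ is what permits $\phi_{n}$ to reach the value $1$ on sets exhausting $[0,+\infty)$. The Leibniz rule combined with the bounds on $\nabla^{j}H$ for $j\leq k-1$ then gives pointwise control on $|\nabla^{j}\chi_{n}|$ for $j\leq k-1$ tight enough to conclude by dominated convergence that $\chi_{n}f\to f$ in $W^{k-1,2}(M)$ for every $f\in W^{k,2}(M)$. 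The truly top-order term $\int_{M}f^{2}|\nabla^{k}\chi_{n}|^{2}\,d\mathrm{vol}$, by contrast, contains a piece $\phi_{n}'(H)\nabla^{k}H$ that cannot be handled by pointwise bounds on $H$.

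To circumvent this, I would introduce the symmetrization $T_{n}:=\mathrm{Sym}\bigl(\nabla^{k-1}\chi_{n}\bigr)\in\Gamma\bigl(\mathrm{Sym}^{k-1}(T^{*}M)\bigr)$. Iterated applications of the Ricci identity express $\nabla^{k-1}\chi_{n}-T_{n}$ as a polynomial in $\nabla^{\leq k-3}\mathrm{Riem}$ contracted with $\nabla^{\leq k-3}\chi_{n}$, all controlled by hypothesis. Hence, up to such already controlled lower-order pieces, $|\nabla T_{n}|$ controls $|\nabla^{k}\chi_{n}|$ pointwise. Applying the Weitzenb\"ock identity $\Delta_{\mathrm{Sym}}=\Delta_{B}+c\mathfrak{Ric}$ together with the Hodge-type decomposition $\Delta_{\mathrm{Sym}}=D_{S}^{*}D_{S}-D_{S}D_{S}^{*}$ to $T_{n}$, integrated against $f^{2}T_{n}$, yields schematically
\[
\int f^{2}|\nabla T_{n}|^{2}\,d\mathrm{vol}\leq \int f^{2}|D_{S}^{*}T_{n}|^{2}\,d\mathrm{vol} + |c|\int f^{2}|\mathfrak{Ric}||T_{n}|^{2}\,d\mathrm{vol} + \mathcal{E}_{n},
\]
where $\mathcal{E}_{n}$ collects cross terms involving $\nabla f$ and $\nabla \chi_{n}$. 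The essential gain is that $D_{S}^{*}T_{n}$ is a divergence-type contraction of $\nabla T_{n}$, reducing up to curvature corrections of admissible order to $\Delta(\nabla^{k-2}\chi_{n})$; the latter involves $H$ only through $\nabla^{\leq k-1}H$, and can be estimated using the equation $\Delta h=|\nabla h|^{2}-C_{\theta}H_{k-1}^{2}$ underlying the Bianchi-Setti construction of $H$ together with the curvature bounds.

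The main obstacle will be the bookkeeping of the last estimate: one must (i) enumerate all lower-order terms produced by repeated symmetrization and commutation of covariant derivatives, verifying they are dominated by suitable powers of $\lambda$; (ii) absorb the cross terms $\mathcal{E}_{n}$ via Cauchy-Schwarz and the already-established $W^{k-1,2}$-convergence; (iii) calibrate the profile $\phi_{n}$ so that the right-hand side of the displayed inequality is dominated by an $L^{1}$ function and vanishes pointwise as $n\to\infty$ whenever $f\in W^{k,2}(M)$. Dominated convergence then gives $\chi_{n}f\to f$ in $W^{k,2}(M)$; since each $\chi_{n}f$ is compactly supported and, after a standard Meyers-Serrin mollification, can be approximated in $W^{k,2}$ by elements of $C_{c}^{\infty}(M)$, this proves $W_{0}^{k,2}(M)=W^{k,2}(M)$.
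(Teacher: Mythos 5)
Your overall strategy is the one the paper follows: build $k$-th order \emph{rough Laplacian} cut-offs $\chi_n=\phi_n\circ H$ from the $(k-1)$-th order distance-like function, reduce everything to $\int_M f^2|\nabla^k\chi_n|^2$, symmetrize $\nabla^{k-1}\chi_n$, and run a Bochner argument with the Sampson--Lichnerowicz Laplacian paired against $f^2\,s_{k-1}(\nabla^{k-1}\chi_n)$, controlling $D_S^*$ of the symmetrization through $\Delta\nabla^{k-2}\chi_n$, which in turn is controlled by differentiating the equation $\Delta h=|\nabla h|^2-C_\theta\Theta(H_{k-2})$ and commuting derivatives at the cost of $\nabla^{\le k-3}\mathrm{Riem}$ only. (One phrasing slip: $\Delta\nabla^{k-2}\chi_n$ does \emph{not} involve $H$ only through $\nabla^{\le k-1}H$ --- it contains $\phi_n'(H)\,\Delta\nabla^{k-2}H$, a trace of $\nabla^kH$; this is exactly why the PDE is indispensable, and since you invoke it, this is cosmetic.)

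There is, however, a genuine gap at the pivotal step. Your displayed inequality
\[
\int f^{2}|\nabla T_{n}|^{2}\le \int f^{2}|D_{S}^{*}T_{n}|^{2}+|c|\int f^{2}|\mathfrak{Ric}||T_{n}|^{2}+\mathcal{E}_{n}
\]
does not follow ``schematically'' from $\Delta_{\mathrm{Sym}}=\Delta_B+c\,\mathfrak{Ric}=D_S^*D_S-D_SD_S^*$. Pairing these identities with $f^2T_n$ and integrating by parts yields, up to cross terms,
\[
\int f^{2}\left(|D_{S}T_{n}|^{2}-|\nabla T_{n}|^{2}\right)\;=\;\int f^{2}|D_{S}^{*}T_{n}|^{2}-\int f^{2}\langle\mathfrak{Ric}(T_{n}),T_{n}\rangle+\mathcal{E}_{n},
\]
so what you actually control is the \emph{difference} $|D_ST_n|^2-|\nabla T_n|^2$, not $|\nabla T_n|^2$ itself. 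For a general symmetric tensor this difference can be negative ($|s_k(\nabla T)|\le|\nabla T|$ by Cauchy--Schwarz, while $D_S=k\,s_k\circ\nabla$), and the argument would then give nothing. It closes only because, for the specific tensor $T_n=s_{k-1}(\nabla^{k-1}\chi_n)$, repeated commutation shows $|D_ST_n|^2=k^2|\nabla^k\chi_n|^2+\mathrm{Riem}\text{-corrections}$ and $|\nabla T_n|^2=|\nabla^k\chi_n|^2+\mathrm{Riem}\text{-corrections}$, so that the left-hand side is $(k^2-1)\int f^2|\nabla^k\chi_n|^2$ plus controlled terms, with a strictly positive coefficient. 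This gain of $k^2$ over $1$ --- coming from the normalization of $D_S$ together with the fact that $\nabla^k\chi_n$ is symmetric modulo curvature --- is the heart of the proof and is absent from your outline; it is not part of the ``bookkeeping of lower-order terms'' you defer, but the structural reason the Bochner identity yields any information at all. Once this is inserted (and $\varepsilon,\delta,\eta$ are chosen small), the rest of your plan matches the paper's proof.
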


\subsection{Other applications} 
Beyond the density problem for Sobolev spaces, other results can be obtained by means of Theorem \ref{th_distancefunction}, notably when $k=2$, i.e. for distance-like functions with controlled gradient and Hessian. Some of these applications were already observed in \cite{IRV-HessCutOff}, and include disturbed Sobolev inequalities, $L^2$-Calder\'on-Zygmund inequalities and the full Omori-Yau maximum principle for the Hessian. Since Theorem \ref{th_distancefunction} improves \cite[Theorem 1.5]{IRV-HessCutOff} also when $k=2$, by allowing for a wider class of admissible growth functions $\lambda$, the range of application of the results alluded to above is enlarged in this paper. This will be made explicit respectively in Theorem \ref{th_sob}, Theorem \ref{th_CZ_dist} and Subsection \ref{subsec_OY}.
On the way, we will take also the occasion to point out a couple of new applications of the 2nd-order distance-like functions which were not contained in \cite{IRV-HessCutOff}. The first one is a higher order version of the disturbed Sobolev inequality, i.e. 
\begin{align*}
\|\varphi\|_{L^{pm/(m-kp)}(M)}\leq A \sum_{r=-1}^{k-1}\left\| \left(\max\{1;r(x)\}\right)^{\frac\eta{mp}(r+1)(2m-rp)}|\nabla^{k-r-1}\varphi|\right \|_{L^p(M)},
\end{align*}
for all $\varphi\in C^\infty_c(M)$. Quite surprisingly, compared to the case $k=1$, this does not require any additional assumption; see Proposition \ref{th_sob_HO}. The second new application is the following disturbed global $L^p$ Calder\'on-Zygmund inequality for $p\neq 2$.
\begin{theorem}\label{th_CZp_dist}
Let $(M^m,g)$ be a smooth, complete non-compact Riemannian manifold without boundary. Let $o\in M$, $r(x)\doteq \mathrm{dist}_{g}(x,o)$ and suppose  that  for some $\eta>0$, some $D>0$ and some $i_0>0$,
\[
\ |\mathrm{Sect}|(x)\leq D^2(1+r(x)^2)^{\eta},\quad\mathrm{inj}_{g}(x)\geq \frac{i_0}{D(1+r(x))^{\eta}}>0\quad\mathrm{on}\,\,M.
\]
Then there exist constants $A>0$ depending on $m$, $\eta$, $D$, $i_0$ and the constant $C$ from Theorem \ref{th_distancefunction}, such that for all $\varphi\in C^\infty_c(M)$ it holds
\begin{align*}
\||\mathrm{Hess}\,\varphi|_g\|_{L^p}^p \leq  A \left[ \|H^{2\eta}\varphi\|_{L^p}^p + \|\Delta \varphi\|_{L^p}^p\right],
\end{align*}
where $H\in C^\infty(M)$ is the distance-like function given by Theorem \ref{th_distancefunction}, which satisfies in particular $H(x)\leq \max\{1;r(x)\}$.
\end{theorem}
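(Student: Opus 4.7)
The plan is to reduce the inequality to the standard Euclidean $L^{p}$ Calder\'on--Zygmund estimate by a local metric rescaling, and then to globalise via a bounded-multiplicity cover adapted to the non-uniform geometric scales present here. I take $\l(t)\doteq D(1+t)^{\eta}$, which satisfies the assumptions (A1)--(A3) and (A4($j$)) for $j=1,2$ (cf. Lemma \ref{l: lambda}). Since $\abs{\mathrm{Ric}}(x)\leq (m-1)\abs{\mathrm{Sect}}(x)\leq (m-1)\l(r(x))^{2}$, alternative (b) of Theorem \ref{th_distancefunction} applies with $k=2$ and yields the distance-like function $H$ appearing in the statement, satisfying $\abs{\nabla H}\leq 1$, $C^{-2}r\leq H\leq \max\{1,r\}$ and $\abs{\nabla^{2}H}\leq C\l(r)$. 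Replacing $H$ by $H+1$ if necessary, I may further assume $H\geq 1$ pointwise without losing any of these properties.

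Local step: fix $x_{0}\in M$, set $\mu_{0}\doteq\l(r(x_{0}))$, and consider the homothetic rescaling $\tg\doteq \mu_{0}^{2}g$. By (A2), $\l\circ r$ stays comparable to $\mu_{0}$ on the $g$-ball $B_{c/\mu_{0}}(x_{0})$ for a suitable universal small $c>0$; consequently on the $\tg$-ball $B^{\tg}_{c}(x_{0})$ the $\tg$-sectional curvature is bounded by a universal constant and $\mathrm{inj}_{\tg}(x_{0})\geq i_{0}/D$. A standard harmonic-radius estimate then produces harmonic coordinates on a $\tg$-ball of uniform radius $r_{0}=r_{0}(m,D,\eta,i_{0})$ around $x_{0}$, in which the components of $\tg$ are uniformly $C^{1,\alpha}$-close to those of the Euclidean metric. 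Pulling back the Euclidean interior $L^{p}$ Calder\'on--Zygmund inequality through these coordinates gives
\[
\norm{\mathrm{Hess}_{\tg}\varphi}_{L^{p}(B^{\tg}_{r_{0}/2}(x_{0}),\tg)}^{p}\leq A_{0}\ton{\norm{\Delta_{\tg}\varphi}_{L^{p}(B^{\tg}_{r_{0}}(x_{0}),\tg)}^{p}+\norm{\varphi}_{L^{p}(B^{\tg}_{r_{0}}(x_{0}),\tg)}^{p}}
\]
for every $\varphi\in C^{\infty}_{c}(M)$, with $A_{0}$ independent of $x_{0}$. Because $\tg$ differs from $g$ by a constant conformal factor the Christoffel symbols are preserved, whence $\mathrm{Hess}_{\tg}\varphi=\mathrm{Hess}_{g}\varphi$ as $(0,2)$-tensors, while $\abs{\cdot}_{\tg}=\mu_{0}^{-2}\abs{\cdot}_{g}$ on $(0,2)$-tensors, $\Delta_{\tg}=\mu_{0}^{-2}\Delta_{g}$, and $d\mathrm{vol}_{\tg}=\mu_{0}^{m}d\mathrm{vol}_{g}$. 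Substituting these rescalings and dividing by the common factor $\mu_{0}^{m-2p}$ converts the inequality above into the local $g$-inequality
\[
\norm{\mathrm{Hess}_{g}\varphi}_{L^{p}(B_{r_{0}/(2\mu_{0})}(x_{0}))}^{p}\leq A_{0}\ton{\norm{\Delta_{g}\varphi}_{L^{p}(B_{r_{0}/\mu_{0}}(x_{0}))}^{p}+\mu_{0}^{2p}\norm{\varphi}_{L^{p}(B_{r_{0}/\mu_{0}}(x_{0}))}^{p}}.
\]

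Global step: since $H\geq C^{-2}r$ and $r(y)$ is comparable to $r(x_{0})$ on $B_{r_{0}/\mu_{0}}(x_{0})$, the bound $\mu_{0}^{2p}\lesssim H(y)^{2\eta p}$ holds pointwise on that ball (the modification $H\geq 1$ handles the compact region where $r(x_{0})$ stays bounded). Because $\l$ is slowly varying by (A2), the radii $r_{0}/\l(r(x))$ of neighbouring balls are mutually comparable, and a standard Vitali-type selection produces a countable family $\{x_{i}\}\subset M$ such that the balls $B_{r_{0}/(2\mu(x_{i}))}(x_{i})$ cover $M$ while the enlarged balls $B_{r_{0}/\mu(x_{i})}(x_{i})$ have bounded overlap multiplicity $N=N(m,\eta)$. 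Summing the local inequalities over $i$ and absorbing the multiplicity into the constant yields the announced global estimate.

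The main technical obstacle is securing a harmonic-radius estimate on the rescaled balls with constants depending only on $m,\eta,D,i_{0}$ and not on the centre $x_{0}$; once this $x_{0}$-uniformity is in place, both the Euclidean Calder\'on--Zygmund step in the harmonic chart and the Vitali-type covering are routine adaptations, and the scaling bookkeeping is essentially dictated by the choice of $\l$.
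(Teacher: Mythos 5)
Your argument is correct in outline, but it follows a genuinely different route from the paper. The paper performs a single global conformal change $\tilde g=H^{2s}g$ with a non-constant factor built from the second-order distance-like function of Theorem \ref{th_distancefunction}, proves that $(M,\tilde g)$ has bounded sectional curvature and --- this is its main technical point, handled by a lifting/homotopy contradiction argument --- uniformly positive injectivity radius, then invokes the global Calder\'on--Zygmund inequality of \cite{GuneysuPigola} for $\tilde g$ and unwinds the conformal factor via the substitution $u=H^{-\eta(\frac mp-2)}\varphi$ together with a gradient interpolation inequality. You instead rescale by the \emph{constant} $\lambda(r(x_0))^2$ near each point, pass to harmonic charts of uniform size, apply the Euclidean interior $L^p$ estimate, and glue with a bounded-multiplicity cover; in effect you re-prove a variable-scale version of the G\"uneysu--Pigola theorem directly. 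What this buys: you bypass entirely the injectivity-radius analysis for the conformally deformed metric, and Theorem \ref{th_distancefunction} enters only cosmetically, to phrase the weight as $H^{2\eta}$ rather than $(1+r)^{2\eta}$. What it costs: two steps you label routine carry real content and should be spelled out. First, the bounded-overlap cover at the variable scale $r_0/\lambda(r(x))$ needs uniform two-sided volume bounds for balls of that radius (available from the harmonic charts, or from Bishop--Gromov plus the injectivity radius lower bound), and the multiplicity will depend on $D$ and $i_0$ as well as on $m$ and $\eta$. Second, converting the coordinate Hessian $D^2\varphi$ into $\abs{\mathrm{Hess}_{\tilde g}\,\varphi}_{\tilde g}$ produces a first-order term $\abs{D\varphi}$ that must be absorbed by interior interpolation on the larger ball before summing. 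Finally, replacing $H$ by $H+1$ changes the weight in the conclusion unless $H$ is bounded below by a positive constant; this is harmless because the extension of $H$ inside the compact set in the proof of Theorem \ref{th_distancefunction} can be (and in the paper implicitly is) chosen strictly positive, but you should say so rather than silently altering the statement.
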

Compared to the $L^2$-case alluded to above, the main difficulty here is to control the injectivity radius of the manifold at hand under a conformal deformation of the metric.  
  
\subsection{Structure of the paper}The rest of the paper is structured as follows.
In Section \ref{SectNotations}, for  the reader's convenience, we list some notations which will be used throughout the paper. Section \ref{SectHighOrdDistLike} is devoted to the construction of higher order distance-like functions described in  Theorem \ref{th_distancefunction}. These will be then used in Section \ref{SectCut&Dens} to construct the cut-off functions needed for the proof of the density property for Sobolev spaces stated in Theorem \ref{th_main1} which will be proved in the final part of the section. In Section \ref{p2} we focus on the special case $p=2$. We construct the  $k$-th order rough Laplacian cut-off functions, introduce Sampson's Lichnerowicz Laplacian acting on the space of smooth sections of symmetric $k$-covariant tensors and its Weitzenb\"ock formula, and give a proof of the density property stated in Theorem \ref{Dens_p=2}. The other applications of the higher order distance-like functions we mentioned above will be presented in Section \ref{SectSharpAppl}, which contains in particular a proof of Theorem \ref{th_CZp_dist}. For the sake of completeness, we end the paper with two appendices. In Appendix \ref{App_comm}, we give a proof of some commutation formulas which we use in Section \ref{p2}, while in Appendix \ref{AppB} we explicitly prove the Weitzenb\"ock formula for Sampson's Laplacian.
\section{Some notational conventions}\label{SectNotations}
The following are some notations and conventions which will be used throughout the paper. 
\smallskip

For any $\beta>0$, the Euclidean ball of radius $\beta$ centered at the origin will be denoted by $\B_{\beta}$. Given a smooth enough function $\lambda:\mathbb{R}\to\mathbb{R}$, its $j$-th derivative will be denoted by $\lambda^{(j)}$. Moreover, using standard notation,  given a smooth enough function $F:\mathbb R^m\to \mathbb R$  we  will denote by $DF$ the Euclidean gradient of $F$ and, given a multi-index $\gamma=(\gamma_1,\dots,\gamma_q)\in (\mathbb R^{m})^q$, we will denote $|\gamma|=q$ and $\partial_\gamma f=\partial^q_{\gamma_1\cdots \gamma_q}f\doteq\partial_{\gamma_1}\cdots\partial_{\gamma_q}f$. 
\smallskip

Given $(M^m, g)$ an $m$-dimensional complete Riemannian manifold, we will denote by $\nabla_{g}$, $\Delta_{g}\doteq\mathrm{div_{g}}\nabla_{g}$ and $\nabla^{j}_{g}$, respectively, the  corresponding Riemannian gradient, the (negative definite) Laplace-Beltrami operator and the natural $j$-th covariant derivative, thus specifying in the subscript the metric we are considering. Given a smooth enough  $f: M\to\mathbb{R}$, the Hessian of $f$, i.e. $\nabla_g^2f$, will be also denoted by $\mathrm{Hess}_{g}f$. An analogous convention will be used for various concepts associated to the metric such as the geodesic distance $d_{g}(\cdot, \cdot)$, open geodesic balls $B_{r}^{g}(x)$ of radius r centered at a point $x \in M$, the injectivity radius $\mathrm{inj}_{g}(x)$ at a point $x\in M$, the global injectivity radius $\mathrm{inj}_{g}(M)$, and the volume measure $d\mathrm{vol}_{g}$. Likewise, we will denote the Riemann curvature tensor of $(M,g)$ by $\mathrm{Riem}_{g}$, the Ricci tensor by $\mathrm{Ric}_{g}$ and the Sectional curvature of a two dimensional subspace $\pi\subset T_{x}M$ by $\mathrm{Sect}_{g}(\pi)$.  However, we will omit the subscritpt metric when the meaning is clear, as for instance for the fiber norms induced by $g$ (as in the expression $|\nabla_g f|$), as well as in all the Section \ref{p2} and in the Appendix \ref{App_comm} and \ref{AppB}, where we are considering a unique Riemannian metric $g$ on $M$.

Note also that when writing an inequality of the form $|\mathrm{Sect}_{g}|(x)\leq a(x)$  on $M$ we will mean that all the sectional curvatures at the point $x$ are dominated, in absolute value, by $a(x)$ for every $x\in M$.

In some parts of the paper, especially in Section \ref{p2} and in the appendices, some of the formulae which we will need can be significantly simplified using the following "$*$" notation, used e.g. in \cite{Topping}. We denote by $A*B$ any tensor field which is a real linear combination of tensor fields, each formed by starting with the tensor field $A\otimes B$, using the metric to switch the type of any number of $T^{*}M$ components to $TM$ components, or vice versa, taking any number of contractions, and switching any number of components in the product. 

Finally, all over the paper, $C$ will denote real constants, whose explicit value can possibly change from line to line. When it will seem appropriate we will specify some dependences of these constants at the subscript.

\section{Higher order distance-like functions}\label{SectHighOrdDistLike}

\subsection{On the assumptions}\label{subsec_assumptions}

Let $\lambda:[0,+\infty)\to[0,+\infty)$ be a $C^\infty$ function. In Section \ref{Intro}  we introduced the following assumptions:
\begin{itemize}
\item[(A1)] There exists a constant $R_1>0$ such that $\lambda$ is strictly positive and non-decreasing on $[R_1,+\infty)$. 
\item[(A2)] For every $\delta >0$, there exist constants $R_2=R_2(\delta)>0$ and $M_2=M_2(\delta)>1$ such that 
\[\forall\,t>R_2,\quad M_2(\delta)^{-1}\leq \frac{\lambda(\delta t)}{\lambda(t)}\leq M_2(\delta),\quad \text{and}\quad M_2(\delta)^{-1}\leq \frac{\lambda'(\delta t)}{\lambda'(t)}\leq M_2(\delta).
\]
\item[(A3)] There exist constants $R_{3}>0$ and $M_{3}>1$ such that 
\[\forall\,t>R_{3},\quad M_3^{-1}\leq \frac{t\lambda'(t)}{\lambda(t)}\leq M_3.
\]
Moreover, for every integer $j\geq 1$ we have defined the property
\item[(A4($j$))] There exist constants $R_{4(j)}>0$ and $M_{4(j)}>1$ such that 
\[\forall\,t>R_{4(j)},\quad  \frac{t\lambda^{(j)}(t)}{\lambda^j(t)}\leq M_{4(j)}.
\]

\end{itemize}

Let $\theta(t)\dot = t\lambda(t)$. We have the validity of the following
\begin{lemma}\label{lem_A7}
Assumption (A3) implies that for all $t>R_{3}$,
\[
(1+M_3^{-1})\lambda(t) \leq \theta'(t)\leq (1+M_3)\lambda(t).
\]
\end{lemma}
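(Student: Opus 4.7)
The plan is essentially a one-line computation, so I will just outline it cleanly. First I would differentiate $\theta(t)=t\lambda(t)$ by the product rule to get
\[
\theta'(t) = \lambda(t) + t\lambda'(t).
\]
Next, I would invoke assumption (A3) directly: for $t>R_3$ it gives the two-sided bound
\[
M_3^{-1}\lambda(t) \,\leq\, t\lambda'(t) \,\leq\, M_3\,\lambda(t),
\]
where positivity of $\lambda$ on $[R_1,+\infty)$ (from (A1), assuming $R_3\geq R_1$, which one can always arrange by enlarging $R_3$) ensures we may rearrange the inequality freely without sign issues. Adding $\lambda(t)$ throughout and substituting into the expression for $\theta'(t)$ yields the claim
\[
(1+M_3^{-1})\lambda(t) \,\leq\, \theta'(t) \,\leq\, (1+M_3)\lambda(t).
\]

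There is no real obstacle: the lemma is a direct algebraic consequence of the definition of $\theta$ combined with the logarithmic-derivative bound (A3). The only mild subtlety is ensuring $\lambda(t)>0$ when translating the ratio bound of (A3) into the linear bound on $t\lambda'(t)$, and this is handled by (A1).
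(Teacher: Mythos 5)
Your proof is correct and is exactly the computation the paper intends (the paper states the lemma as an immediate consequence of $\theta'(t)=\lambda(t)+t\lambda'(t)$ and (A3), without writing out a proof). The positivity point you raise is already implicit in (A3) itself, since the ratio $t\lambda'(t)/\lambda(t)$ appearing there forces $\lambda(t)>0$ for $t>R_3$.
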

Let $j\geq 1$ be an integer. Since $\theta^{(j)}= t\lambda^{(j)}(t)+j\lambda^{(j-1)}(t)$, we have 
\begin{lemma}\label{lem_A8}
Assumptions (A4($j-1$)) and (A4($j$)) imply that there exist constants $R_{5(j)}>0$ and $M_{5(j)}>1$ such that \[\forall\,t>R_{5(j)},\quad  \theta^{(j)}(t)\leq M_{5(j)}\lambda^j(t).\]
\end{lemma}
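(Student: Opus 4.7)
The plan is to use directly the identity $\theta^{(j)}(t) = t\lambda^{(j)}(t) + j\lambda^{(j-1)}(t)$ already recorded just above the statement, and estimate each of the two summands by a constant multiple of $\lambda^j(t)$. Setting $M_{5(j)}$ to be the sum of the two resulting constants and $R_{5(j)}$ to be the maximum of the corresponding thresholds should then give the conclusion.

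For the first summand, assumption (A4($j$)) gives at once $t\lambda^{(j)}(t)\leq M_{4(j)}\lambda^j(t)$ for all $t>R_{4(j)}$, which is already of the desired form. For the second summand $j\lambda^{(j-1)}(t)$, the case $j=1$ is trivial since $\lambda^{(j-1)}=\lambda=\lambda^j$ and no hypothesis is needed. When $j\geq 2$, I would rewrite (A4($j-1$)) as $\lambda^{(j-1)}(t)\leq M_{4(j-1)}\lambda^{j-1}(t)/t$ for $t>R_{4(j-1)}$ and then try to absorb the factor $1/t$ into an additional power of $\lambda$; concretely, this reduces to showing that the ratio $\lambda^{j-1}(t)/(t\,\lambda^j(t))=1/(t\lambda(t))$ is bounded at infinity.

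This final comparison is the main (and essentially only) obstacle, and it is handled by assumption (A1): since $\lambda$ is strictly positive and non-decreasing on $[R_1,+\infty)$, one has $\lambda(t)\geq \lambda(R_1)>0$ for every $t\geq R_1$, so that $t\lambda(t)\to +\infty$. In particular there exists $\bar R\geq R_1$ such that $t\lambda(t)\geq 1$ for all $t\geq \bar R$, whence $j\lambda^{(j-1)}(t)\leq jM_{4(j-1)}\lambda^j(t)$ on $[\bar R,+\infty)$. Combining the two bounds and choosing $R_{5(j)}\doteq \max\{\bar R,\,R_{4(j-1)},\,R_{4(j)}\}$ (with the convention that $R_{4(0)}$ is simply dropped when $j=1$) together with $M_{5(j)}\doteq M_{4(j)}+jM_{4(j-1)}$ (resp.\ $M_{5(1)}\doteq M_{4(1)}+1$), which is clearly greater than $1$, completes the proof. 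I do not expect any hidden difficulty: everything beyond invoking (A4($j$)) and (A4($j-1$)) reduces to the elementary observation that (A1) forces $t\lambda(t)$ to be eventually bounded away from zero.
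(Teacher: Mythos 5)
Your proof is correct and follows exactly the route the paper intends: the paper states the Leibniz identity $\theta^{(j)}(t)=t\lambda^{(j)}(t)+j\lambda^{(j-1)}(t)$ and leaves the rest implicit, and your completion — bounding the first summand directly by (A4($j$)) and the second by (A4($j-1$)) after absorbing the factor $1/t$ via the elementary consequence of (A1) that $t\lambda(t)$ is eventually $\geq 1$ — is precisely the missing detail. No gaps.
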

Now, let $j\geq 0$. Define $\Theta(t)\dot=(\theta'(t))^2=(t\lambda'(t)+\lambda(t))^2$. Note that $\Theta^{(j)}$ is a linear combination of terms of the form $\theta^{(j_1)}\theta^{(j_2)}$, with $j_1$ and $j_2$ positive integers satisfying $j_1+j_2=j+2$. Then we have also the following
\begin{lemma}\label{lem_A6}
Suppose that assumptions (A4($s$)), $s=1,\dots,j+1$ are satisfied. Then there exist constants $R_{6(j)}>0$ and $M_{6(j)}>1$ such that \[\forall\,t>R_{6(j)},\quad  \Theta^{(j)}(t)\leq M_{6(j)}\lambda^{j+2}(t).\]
\end{lemma}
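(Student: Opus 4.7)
The plan is to combine the Leibniz expansion of $\Theta^{(j)}=((\theta')^2)^{(j)}$ (already sketched by the authors immediately before the statement) with the estimates produced by Lemma~\ref{lem_A8}, bounding each factor $\theta^{(j_\ell)}$ separately and then multiplying. Concretely, first one writes
\begin{equation*}
\Theta^{(j)}(t) = \sum_{i=0}^{j}\binom{j}{i}\,\theta^{(i+1)}(t)\,\theta^{(j-i+1)}(t),
\end{equation*}
so that $\Theta^{(j)}(t)$ is a linear combination of products $\theta^{(j_1)}(t)\,\theta^{(j_2)}(t)$ with $j_1+j_2=j+2$ and $j_1,j_2\geq 1$; in particular $\max(j_1,j_2)\leq j+1$.

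Next, I would bound each factor $\theta^{(j_\ell)}(t)$ by a constant multiple of $\lambda^{j_\ell}(t)$. When $j_\ell\geq 2$, this is precisely the content of Lemma~\ref{lem_A8}, whose hypotheses are $(\mathrm{A4}(j_\ell-1))$ and $(\mathrm{A4}(j_\ell))$: since $2\leq j_\ell\leq j+1$, both are among the $(\mathrm{A4}(s))$, $s=1,\dots,j+1$, assumed here. When $j_\ell=1$, Lemma~\ref{lem_A8} is not directly applicable (since $(\mathrm{A4}(0))$ is not defined), so one handles this case by hand: from $\theta'(t)=t\lambda'(t)+\lambda(t)$ and $(\mathrm{A4}(1))$, which gives $t\lambda'(t)\leq M_{4(1)}\lambda(t)$ for $t>R_{4(1)}$, one obtains $\theta'(t)\leq (1+M_{4(1)})\lambda(t)$ directly.

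Finally, set $R_{6(j)}$ equal to the maximum of $R_{4(1)}$ and of the thresholds $R_{5(s)}$ produced by Lemma~\ref{lem_A8} for $s=2,\dots,j+1$. Multiplying the two factor estimates term by term yields, for every index $i$,
\begin{equation*}
\theta^{(i+1)}(t)\,\theta^{(j-i+1)}(t)\leq C_{i,j}\,\lambda^{(i+1)+(j-i+1)}(t)=C_{i,j}\,\lambda^{j+2}(t)
\end{equation*}
for $t>R_{6(j)}$; summing with the binomial weights gives the required constant $M_{6(j)}$. The only non-routine point is the bookkeeping needed to handle the boundary case $j_\ell=1$ outside of Lemma~\ref{lem_A8}, which is immediate from $(\mathrm{A4}(1))$; the rest of the argument is a purely algebraic manipulation of the Leibniz expansion.
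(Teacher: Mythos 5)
Your proof is correct and follows essentially the same route as the paper, which merely records the Leibniz expansion of $\Theta^{(j)}$ into products $\theta^{(j_1)}\theta^{(j_2)}$ with $j_1+j_2=j+2$ and leaves the application of Lemma \ref{lem_A8} to each factor implicit. Your explicit treatment of the boundary case $j_\ell=1$ via (A4(1)) (which is the upper half of (A3); compare Lemma \ref{lem_A7}) is a detail the paper glosses over, and it is the right way to stay within the stated hypotheses, which do not include (A3).
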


\begin{lemma}\label{l: lambda}
\begin{itemize}
\item[(i)] The function $\lambda(t)\dot = t^\eta$ with $\eta>0$ satisfies assumptions (A1), (A2), (A3) and (A4($j$)) for $j\geq 2$.
\item[(ii)] Suppose that
\[
\lambda(t)=
 \alpha t\prod_{j=1}^{\bar{j}}\ln^{[j]}(t)\]
for $t$ large enough, where $\alpha>0$ is a constant, $\bar{j}$ a positive integer and  we recall that $\ln^{[j]}$ stands for the $j$-th iterated logarithm. Then $\lambda$ satisfies assumptions (A1), (A2), (A3) and (A4($j$)) for $j\geq 2$.
\end{itemize}
\end{lemma}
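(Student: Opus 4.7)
My plan is to check the four assumption families for each of the two candidate functions, noting that part (i) is an immediate calculation while part (ii) requires systematic bookkeeping of the derivatives of iterated logarithms.

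\emph{Part (i).} For $\lambda(t)=t^\eta$ with $\eta>0$, the verifications are direct. Positivity and strict monotonicity for $t>0$ give (A1); the homogeneity relations $\lambda(\delta t)/\lambda(t)=\delta^\eta$ and $\lambda'(\delta t)/\lambda'(t)=\delta^{\eta-1}$ give (A2); the identity $t\lambda'(t)/\lambda(t)\equiv \eta$ gives (A3). Finally, for $j\geq 2$ one has $\lambda^{(j)}(t)=\eta(\eta-1)\cdots(\eta-j+1)\,t^{\eta-j}$, so
\[
\frac{t\lambda^{(j)}(t)}{\lambda^j(t)}=\eta(\eta-1)\cdots(\eta-j+1)\,t^{(1-j)(1+\eta)},
\]
which tends to $0$ at infinity since $j\geq 2$ and $\eta>0$; this gives (A4($j$)).

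\emph{Part (ii).} Write $L_i(t)=\ln^{[i]}(t)$ and recall the elementary identity
\[
L_i'(t)=\frac{1}{t\,L_1(t)\cdots L_{i-1}(t)},
\]
which will be the basic building block. Logarithmic differentiation of $\lambda(t)=\alpha t\prod_{i=1}^{\bar j} L_i(t)$ gives
\[
\frac{t\lambda'(t)}{\lambda(t)}=1+\sum_{i=1}^{\bar j}\frac{1}{L_1(t)\cdots L_i(t)},
\]
a quantity that tends to $1$ as $t\to\infty$ and lies in a compact subinterval of $(0,+\infty)$; this simultaneously yields (A1), (A3), and the quantitative bound on $t\lambda'(t)/\lambda(t)$ needed later. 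For (A2) I would observe that $L_i(\delta t)/L_i(t)\to 1$ as $t\to\infty$ for every fixed $\delta>0$, by iterating $\ln(\delta t)=\ln t+\ln\delta$. Hence $\lambda(\delta t)/\lambda(t)\to \delta$; combined with the explicit expression for $t\lambda'(t)/\lambda(t)$ above, one deduces the analogous two-sided convergence for $\lambda'(\delta t)/\lambda'(t)$.

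The real work is (A4($j$)) for $j\geq 2$. My strategy is to prove by induction on $j$ the pointwise estimate
\[
|\lambda^{(j)}(t)|\leq C_j\,\frac{\lambda(t)}{t^j}\qquad \text{for $t$ large,}
\]
with (A3) giving the base case $j=1$. The induction step relies on the observation that differentiating any expression of the form $\lambda(t)\cdot t^{-a_0}\prod_i L_i(t)^{-a_i}$ produces, via the Leibniz rule and the formula for $L_i'$ above, a finite sum of expressions of the same form but with $a_0$ increased by $1$ (and possibly some shifts in the other exponents), all of which are of order $\lambda(t)/t^{j+1}$ up to bounded logarithmic factors. Once this pointwise estimate is in hand, (A4($j$)) follows from
\[
\frac{t\lambda^{(j)}(t)}{\lambda^j(t)}\leq C_j\,\frac{\lambda(t)/t^{j-1}}{\lambda(t)^j}=\frac{C_j}{t^{j-1}\lambda(t)^{j-1}}\longrightarrow 0,
\]
since $\lambda(t)\to\infty$ and $j-1\geq 1$. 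The principal obstacle is the combinatorial bookkeeping in the induction step: keeping track of which exponents $a_i$ increase under differentiation is slightly tedious, but no subtlety is hidden in their precise values, since we only need the correct order of magnitude $\lambda(t)/t^j$ rather than a sharp constant.
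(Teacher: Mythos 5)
Your proof is correct and follows essentially the same route as the paper: direct verification of (A1)--(A3) from the explicit formulas, with (A4($j$)) in part (i) reduced to the sign of the exponent $(1-j)(1+\eta)$ (the paper instead splits into the cases $\eta\leq 1$ and $\eta>1$, but the content is identical). The only real divergence is in (A4($j$)) for part (ii), where the paper merely remarks that $\lim_{t\to\infty}\lambda^{(j)}(t)=0$ and $\lambda^j(t)\geq \alpha^j t^j\geq t$ for large $t$, whereas you establish the stronger inductive pointwise bound $|\lambda^{(j)}(t)|\leq C_j\,\lambda(t)/t^j$; this is more work than strictly necessary, but it is sound (the class of expressions $\lambda(t)\,t^{-a_0}\prod_i L_i^{-a_i}$ with $a_i\geq 0$ is indeed closed under differentiation with $a_0\mapsto a_0+1$) and in fact supplies the justification for the decay of $\lambda^{(j)}$ that the paper leaves implicit.
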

\begin{proof} (i) (A1), (A2), (A3) are trivially satisfied. Concerning (A4($j$)) for $j\geq 2$, we have
\[
\frac{t\lambda^{(j)}(t)}{\lambda^j(t)}=C\frac{t^{\eta-j+1}}{t^{j\eta}}
\]
If $\eta \leq 1$, then $t^{\eta-j+1}\leq 1$ when $t>1$, whence the thesis. For $\eta>1$, we have $t^{\eta+1}\leq t^{\eta j}$ when $t>1$, which permits to conclude.\\

(ii) (A1) is trivial. Concerning (A2) note that
\begin{align*}
\frac{\lambda(\delta t)}{\lambda(t)}= \delta \prod_{j=1}^{\bar{j}} \frac{\ln^{[j]}(\delta t)}{\ln^{[j]}(t)},
\end{align*}
and $\frac{\ln^{[j]}(\delta t)}{\ln^{[j]}(t)} \to 1$ as $t\to\infty$ for every $j$. Similarly, $\lambda'(t)=\alpha+\alpha\sum_{q=1}^{\bar{j}}\prod_{j=q}^{\bar{j}}\ln^{[j]}(t)$, so that 
$\lambda'(t) \sim_{t\to\infty} \alpha\prod_{j=1}^{\bar{j}}\ln^{[j]}(t)$, and
\[
\lim_{t\to\infty} \frac{\lambda'(\delta t)}{\lambda'(t)} = \lim_{t\to\infty}  \frac{\prod_{j=1}^{\bar{j}}\ln^{[j]}(\delta t)}{\prod_{j=1}^{\bar{j}}\ln^{[j]}(t)}=1.
\]
Concerning (A3), we have that 
\[
\frac{t\lambda'(t)}{\lambda(t)} = \frac{\alpha t\left(1+\sum_{q=1}^{\bar{j}}\prod_{j=q}^{\bar{j}}\ln^{[j]}(t) \right)}{\lambda(t)} \sim_{t\to\infty}\frac{\alpha t\prod_{j=1}^{\bar{j}}\ln^{[j]}(t)}{\lambda(t)} =1
\]
Concerning (A4($j$)) it is enough to remark that $\lim_{t\to\infty}\lambda^{(j)}(t)=0$ and that $\lambda^j(t)\geq \alpha^j t^j\geq t$ when $t$ is large enough.
\end{proof}
\subsection{A generalization of a result by Bianchi-Setti}

Schoen and Yau proved in \cite{SY} the existence of families of Laplacian cut-off functions on manifolds with lower bounded Ricci curvature. In \cite[Theorem 2.1]{BianchiSetti}, Bianchi and Setti generalized this result in two directions. On the one hand they got a better control on the Laplacian of the cut-offs when the negative part of the Ricci curvature vanishes at infinity. On the other hand they showed that one can get Laplacian cut-offs even when the negative part of the Ricci curvature explodes at infinity, provided that its growth is suitably controlled, i.e. $\mathrm{Ric}_g\geq - C r(x)^2$. In the following theorem, we extend this latter result to the assumption $\mathrm{Ric}_g\geq - \lambda^2(r(x))$ for a larger class of growth functions $\lambda$. This in particular will permit to sharpen applications to the density problem for Sobolev spaces, see Subsection \ref{ProofDens}.

\begin{theorem}\label{refinedBS}
Let $\lambda:[0,+\infty)\to[0,+\infty)$ be a $C^\infty$ function satisfying assumptions (A1), (A2), (A3). Let $(M^m, g)$ be a complete Riemannian manifold and $o\in M$ a fixed reference point, $r(x)\doteq \mathrm{dist}_{g}(x,o)$.
Suppose that for some $R_0>0$,
\[
\ \mathrm{Ric}_{g}(x)\geq -\lambda^2(r(x)),\quad\mathrm{on}\,\,M\setminus B_{R_0}(o).
\]
Let $\tilde r\in C^\infty (M \setminus {o})$ be an exhaustion function on $M$ such that for some $C_\tr>1$
\[\begin{cases}
C_\tr^{-1}r(x)\leq \tr(x) \leq C_\tr r(x)\\
\tr(x)=r(x),\quad\text{on }B_2(o)\setminus B_{1/2}(o)\\
|\nabla_g \tr|\leq 1.
\end{cases}
\]
Then there exists $h\in C^{\infty}(M)$ such that 
\begin{itemize}
\item[(i)] $C_h^{-1}r(x)\lambda(r(x))\leq h(x) \leq C_h\max\{1;r(x)\lambda(r(x))\}$;
\item[(ii)] $|\nabla_g h|\leq C_h\lambda(r(x))$ on $M$;
\item[(iii)] $\Delta_g h = |\nabla_g h|^2-C_\theta (\theta'(\tr(x)))^2$ on $M\setminus B_{R_\theta}(o)$ 
\end{itemize}
for some constants $C_\theta>0$, $R_\theta >0$ and $C_h>1$. Here $\theta$ is the function defined on $[0,+\infty)$ by $\theta(t)\dot =t\lambda(t)$.
\end{theorem}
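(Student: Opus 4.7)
The plan is to follow the strategy of \cite{BianchiSetti} and to reduce the construction of $h$ to a question about a linear Schrödinger-type equation. The key observation is that the change of variable $u=e^{-h}$ linearises (iii): since $\Delta_g h-|\nabla_g h|^2=-\Delta_g u /u$, condition (iii) is equivalent to
\[
\Delta_g u = C_\theta (\theta'(\tr))^2\, u\qquad \text{on } M\setminus B_{R_\theta}(o),
\]
with $u>0$. Producing $h$ with (i)--(iii) is thus equivalent to producing a positive solution $u$ of this linear equation whose logarithmic asymptotics at infinity matches $u\asymp e^{-c\,r\lambda(r)}=e^{-c\theta(r)}$, together with a matching gradient bound $|\nabla_g u|/u\leq C\lambda(r)$.

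The second step is a generalised Laplacian comparison: the hypothesis $\mathrm{Ric}_g\geq -\lambda^2(r)$, combined with assumptions (A1)--(A3), yields via Riccati/ODE comparison with the model equation $g''-\lambda^2 g=0$ an estimate of the form $\Delta_g r(x)\leq C\lambda(r(x))$ in the barrier sense, for $r(x)$ sufficiently large. Here (A2) and (A3) are used precisely to ensure that the logarithmic derivative $g'/g$ of the model solution stays comparable to $\lambda$. Using this estimate together with $|\nabla_g\tr|\leq 1$, $\tr\asymp r$ and the two-sided bound $\theta'(t)\asymp \lambda(t)$ given by Lemma \ref{lem_A7}, one can verify that radial ansatz functions of the form $\phi_a(x)=e^{-a\theta(r(x))}$ are super-solutions (respectively sub-solutions) of $\Delta_g u-C_\theta(\theta'(\tr))^2 u=0$ outside a sufficiently large ball, provided the constant $a$ is chosen large (resp.\ small) depending on $m$, $M_3$ and $C_\tr$. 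With these barriers at hand, the function $u$ is produced by a compact exhaustion procedure: solve the Dirichlet problem for the linear equation on $B_n(o)$ with positive boundary values trapped between the two barriers; the comparison principle forces every $u_n$ to remain trapped between the barriers in all of $B_n(o)$; standard Harnack and Schauder estimates then provide a subsequential smooth limit $u$ defined on all of $M$ with the desired two-sided exponential asymptotics.

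Setting $h=-\log u$ outside a large ball and gluing it smoothly to an arbitrary positive $C^\infty$ function inside produces the required $h\in C^\infty(M)$; (iii) holds by construction, and (i) follows from the two-sided barrier control on $u$ combined with $\theta(t)=t\lambda(t)$. The gradient bound (ii) is obtained by applying a Cheng--Yau type gradient estimate to the positive solution $u$ of the linear equation at the natural scale $1/\lambda(r(x))$ (the curvature scale under the given Ricci bound), which gives $|\nabla_g u|/u\leq C\lambda(r(x))$ and hence $|\nabla_g h|\leq C\lambda(r)$. The delicate point, and the main obstacle I anticipate, is to handle the barrier computation in the regime where $\lambda$ is not sub-polynomial: since assumption (A4($j$)) is \emph{not} available in this statement, we have no pointwise control on $\theta''$, and one cannot simply differentiate $\phi_a(x)=e^{-a\theta(r)}$ directly. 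The fix is either to integrate by parts at the level of the comparison argument, or to replace $\phi_a$ by a suitable mollification whose error terms can be absorbed using the scale-invariance of $\lambda$ encoded in (A2)--(A3); this is exactly where the sharpness of assumptions (A2)--(A3) pays off, and it is what distinguishes the present proof from a routine transcription of \cite{BianchiSetti}.
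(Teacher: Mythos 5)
Your first step (linearising via $u=e^{-h}$ and reducing to a positive solution of $\Delta_g u=C_\theta(\theta'(\tr))^2u$) and your last step (the Cheng--Yau/Li--Yau gradient estimate at scale $1/\lambda(r)$ to get (ii) and hence the upper bound in (i)) coincide with what the paper does. The gap is in the middle: the decisive estimate is the \emph{decay} $u(x)\leq Ce^{-\alpha r(x)\lambda(r(x))}$, equivalently the lower bound $h(x)\geq C^{-1}r(x)\lambda(r(x))$ that makes $h$ an exhaustion function, and your radial supersolution argument cannot produce it. For a radial ansatz $\Phi=F(r)$ with $F'<0$ one has, a.e. and in the barrier sense, $\Delta_g\Phi=F''(r)+F'(r)\Delta_g r$, and to conclude $\Delta_g\Phi\leq C_\theta(\theta'(\tr))^2\Phi$ you must bound $F'(r)\Delta_g r$ from above, i.e. bound $\Delta_g r$ from \emph{below}. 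A lower bound on $\Delta_g r$ requires an upper curvature bound; it does not follow from $\mathrm{Ric}_g\geq-\lambda^2(r)$, and near the cut locus $\Delta_g r$ is not even bounded below distributionally (its singular part has the wrong sign for your purpose, since $F'<0$). The Laplacian comparison you invoke only gives $\Delta_g r\leq C\lambda(r)$, which is the right inequality for the \emph{sub}solution (lower barrier) but useless for the supersolution. This is precisely why Bianchi--Setti, and the paper, do not use barriers for the decay: they derive an integral (Agmon-type) estimate $\int(\theta'(\tr))^2e^{\gamma\theta(\tr)}\omega^2\,d\mathrm{vol}_g<\infty$ by testing the equation, convert it into a pointwise bound on $\omega^2(x)$ using the Bishop--Gromov volume lower bound $\mathrm{vol}_g(B_{1/4}(x))\geq C e^{-\alpha r(x)\lambda(r(x))}$ (Lemma \ref{lem_SY}, the Schoen--Yau trick) together with a local Harnack inequality coming from the gradient estimate, and then choose the constant $C_e$ in the equation large enough that the exponent $\kappa\gamma\zeta-\alpha$ beats the volume loss. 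Without replacing your supersolution step by an argument of this type (or by some other mechanism that does not need a lower bound on $\Delta_g r$), the proof does not close.

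A secondary remark: the difficulty you flag with $\theta''$ in the barrier computation is real but minor and fixable --- replace the exponent $\theta(r)$ by $\int_{R_1}^{r}\lambda(s)\,ds$, which is comparable to $\theta(r)$ by (A3) and Lemma \ref{lem_A7} and whose second derivative $\lambda'$ \emph{is} controlled by (A3); no mollification is needed. Your exhaustion scheme also needs a normalisation (e.g. $u_n\equiv 1$ on $\partial B_1(o)$, as in the paper's \eqref{eq-omega}) plus a Harnack chain to pin $u_n$ down on the inner boundary of the annulus where the barriers are applied, but these are routine once the supersolution issue is resolved.
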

Beyond the more restrictive choice for the growth function $\lambda$, in the original version of Theorem \ref{refinedBS} given in \cite[Theorem 2.1]{BianchiSetti}, the first-order distance-like function $\tilde r$ was required to satisfy $\|\tilde r -r\|_{L^\infty(M)}<\epsilon$ for an $\epsilon$ arbitrarily small. This is the case for instance if $\tilde r$ is a distance-like function provided by Gaffney's result \cite{Gaffney}. Instead, our Theorem \ref{refinedBS} works for general first-order distance-like functions. This represents a minor technical improvement, which will reveal however essential in proving Theorem \ref{th_distancefunction}.

\begin{proof}[Proof (of Theorem \ref{refinedBS})]
The proof mimics the proof of Theorem 2.1 in \cite{BianchiSetti}. The main difference is to modify the equation of which $\log h$ will be a solution, according to our more general set of assumptions. We will sketch the relevant changes. Also, if $\lambda$ is bounded then the result is contained in \cite[Theorem 2.1]{BianchiSetti}. We can thus assume from now on that $\lambda$ is unbounded and $\lambda(t)\to\infty$ as $t\to\infty$.
\smallskip

We start pointing out the following lemma. 
\begin{lemma}\label{lem_SY}
Let $\lambda:[0,+\infty)\to[0,+\infty)$ be a $C^\infty$ function satisfying assumptions (A1) and (A2). Let $(M^m, g)$ be a complete Riemannian manifold and $o\in M$ a fixed reference point, $r(x)\doteq \mathrm{dist}_{g}(x,o)$. Suppose that for some $R_0>0$,
\[
\ \mathrm{Ric}_{g}(x)\geq -\lambda^2(r(x)),\quad\mathrm{on}\,\,M\setminus B_{R_0}(o).
\]
Then there exists $\alpha>1$, $C_{SY}>0$ and $R_\alpha>0$ such that for every $x\in M\setminus B_{R_\alpha}(o)$ it holds
\[
\mathrm{vol}_g(B_{1/4}(x))\geq C_{SY}e^{-\alpha r(x)\lambda(r(x))}.\]
\end{lemma}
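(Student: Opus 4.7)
The plan is to apply a standard Bishop--Gromov volume comparison argument, where the only subtlety is handling the position-dependent Ricci lower bound $-\lambda^2(r(\cdot))$ instead of a uniform one. Write $R\doteq r(x)$ and assume $R$ large.

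First, I would observe that on the enlarged ball $B_{R+1}(x)$, the position-dependent Ricci bound can be made uniform. Indeed, for $y\in B_{R+1}(x)$, the triangle inequality gives $r(y)\leq 2R+1$, so by the monotonicity of $\lambda$ in assumption (A1),
\[
\mathrm{Ric}_g(y)\geq -\lambda^2(r(y))\geq -\lambda^2(2R+1)\qquad\text{for all } y\in B_{R+1}(x).
\]
Set $K\doteq \lambda(2R+1)/\sqrt{m-1}$.

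Second, I would apply the relative Bishop--Gromov comparison on $B_{R+1}(x)$ with this $K$. Denoting by $V_K(r)$ the volume of the ball of radius $r$ in the simply connected space form of constant sectional curvature $-K^2$, this gives
\[
\frac{\mathrm{vol}_g(B_{1/4}(x))}{V_K(1/4)}\;\geq\;\frac{\mathrm{vol}_g(B_{R+1}(x))}{V_K(R+1)}.
\]
Since $o\in B_R(x)$, the triangle inequality yields $B_1(o)\subset B_{R+1}(x)$, hence $\mathrm{vol}_g(B_{R+1}(x))\geq \mathrm{vol}_g(B_1(o))\doteq V_o>0$, a fixed positive constant depending only on $(M,g)$ and $o$.

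Third, I would bound the two model volumes. Since $V_K(r)$ is increasing in $K$, one has $V_K(1/4)\geq V_0(1/4)=\omega_m(1/4)^m$. On the other hand, using $\sinh(t)\leq e^t$, a direct integration gives
\[
V_K(R+1)=\omega_{m-1}\int_0^{R+1}\bigl(\sinh(Ks)/K\bigr)^{m-1}ds\;\leq\; C_m\max\bigl\{(R+1)^m,\,e^{(m-1)K(R+1)}\bigr\}.
\]
Combining the previous three steps, for $R$ large enough,
\[
\mathrm{vol}_g(B_{1/4}(x))\;\geq\; C\, e^{-(m-1)K(R+1)},
\]
where $C>0$ depends only on $m$ and $V_o$.

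Finally, I would use assumption (A2) with $\delta=3$ (together with (A1)) to compare $\lambda(2R+1)$ with $\lambda(R)$: for $R\geq 1$ large,
\[
\lambda(2R+1)\leq \lambda(3R)\leq M_2(3)\,\lambda(R).
\]
Since $R+1\leq 2R$, this yields
\[
(m-1)K(R+1)\leq 2\sqrt{m-1}\,M_2(3)\,R\lambda(R),
\]
and choosing $\alpha\doteq \max\{2,\,2\sqrt{m-1}\,M_2(3)\}>1$ and $C_{SY}\doteq C$ delivers the claim for all $x$ with $r(x)\geq R_\alpha$ for a suitable $R_\alpha$ (depending on $m$, on the constants $R_1,R_2(3),M_2(3)$, and on the $R_0$ in the Ricci assumption).

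There is no real obstacle: the argument is a direct calibration of Bishop--Gromov, and the role of (A2) is precisely to let us express the otherwise ambient-dependent exponent $K(R+1)$ in the clean form $R\lambda(R)$.
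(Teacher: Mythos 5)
Your argument is correct and is essentially the paper's proof: the same relative Bishop--Gromov comparison on $B_{r(x)+1}(x)$ with the uniformized lower bound $\mathrm{Ric}_g\geq-\lambda^2(2r(x)+1)$, the inclusion of a fixed ball about $o$ into $B_{r(x)+1}(x)$, and assumption (A2) to convert $\lambda(2r(x)+1)$ into $M_2\,\lambda(r(x))$; you merely spell out the ``standard computation'' bounding $V_K(R+1)$ that the paper leaves implicit. The one detail to add is that the Ricci hypothesis is only assumed on $M\setminus B_{R_0}(o)$, so on the compact part $\overline{B_{R_0}(o)}$ one notes that $\mathrm{Ric}_g$ is bounded below by continuity and that $\lambda^2(2r(x)+1)$ eventually dominates that bound for $r(x)$ large --- exactly the one-line remark the paper makes before applying the comparison.
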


The original idea is due to Schoen and Yau, \cite{SY}, while in \cite[Proposition 2.11]{BianchiSetti} the authors consider bounds given by (possibly negative) powers of $r(x)$. 

\begin{proof}[Proof (of Lemma \ref{lem_SY})]
Set $G_x\dot = \lambda^2(2r(x)+1)$. By continuity, since $\lambda(t)\to\infty$ as $t\to\infty$, if $r(x)$ is large enough, then $\mathrm{Ric}_g\geq - G_x$ on $B_{2r(x)+1}(o)$. By the Bishop-Gromov comparison,
\begin{align*}
\mathrm{vol}_g(B_{1/4}(x))\geq& \mathrm{vol}_g(B_{r(x)+1}(x))\frac{V_{G_x}(1/4)}{V_{G_x}(r(x)+1)}\geq \mathrm{vol}_g(B_{1}(x))\frac{V_{G_x}(1/4)}{V_{G_x}(r(x)+1)}
\\
\geq &\mathrm{vol}_g(B_{1}(x))\frac{V_0(1/4)}{V_{G_x}(r(x)+1)},
\end{align*}
where $V_{G_x}(t)$ represents the volume of a ball of radius $t$ in the $m$-dimensional simply connected space-form of constant negative curvature $-G_x$, and $V_0(1/4)$ is the the volume of a ball of radius $1/4$ in $\R^m$. A standard computation using assumption (A2) shows that there exist constants $C_1>0$, $R_\alpha>0$ and $\alpha>1$ independent of $x$ such that
\begin{align*}
V_{G_x}(r(x)+1)\leq C_1 e^{\alpha r(x)\lambda(r(x))},
\end{align*}
as soon as $r(x)>R_\alpha$. Setting $C_{SY}=C_1^{-1}\mathrm{vol}_g(B_{1}(x))V_0(1/4)$ concludes the proof of the lemma.
\end{proof}

Recall that $\theta(t)=t\lambda (t)$. Proceeding as in \cite[Theorem 2.1]{BianchiSetti} we can produce a function $\omega\in C^\infty(M\setminus B_1(o))$ solution of
\begin{align}\label{eq-omega}
\begin{cases}
\Delta_g \omega= C^2_e(\theta'(\tr(x)))^2\omega,\\
\omega|_{\partial B_1}=1,\\
0<\omega|_{M\setminus\bar B_1(o)}<1,
\end{cases}
\end{align}
with $C_e$ a large enough constant to be fixed later. Computing as in \cite[page 6]{BianchiSetti} we get that for every $0<\gamma<\sqrt{C_e^2}/2$ and every $x\in M\setminus B_2(o)$, 
\begin{equation}\label{loc-int}
\int_{B_{1/4}(x)}(\theta'(\tr(y)))^2 e^{\gamma \theta(\tr(y))}\omega^2 d\textrm{vol}_g(y)\leq \int_{M\setminus B_1(o)}(\theta'(\tr(y)))^2 e^{\gamma \theta(\tr(y))}\omega^2 d\textrm{vol}_g(y) \leq \frac{A}{C_e^2-4\gamma^2},
\end{equation}
where $A=e^{\lambda(1)}\int_{\partial B_1(o)}|\nabla_g\omega|d\textrm{vol}_{m-1}$ .
We need the following Li-Yau gradient estimate.
\begin{lemma}\label{lem_LY}
There exists a constant $C_{LY}>0$ and $R_{LY}>0$ such that for every $R>R_{LY}$ and for every $x\in B_{R+1}(o)\setminus B_{R_{LY}}(o)$, 
\[
\frac{|\nabla_g\omega|^2}{\omega^2}\leq C_{LY}^2 C^2_e\lambda^2(R).
\]
\end{lemma}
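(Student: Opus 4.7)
The plan is to carry out a localized Li--Yau gradient estimate for the logarithm $u\doteq\log\omega$, paying attention only to the way the constants scale with $\lambda(R)$.

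First I would reduce the statement to an estimate for $u=\log\omega$. From \eqref{eq-omega} one has
\[
\Delta_g u + |\nabla_g u|^2=\frac{\Delta_g\omega}{\omega}=V,\qquad V\doteq C_e^2(\theta'(\tr))^2,
\]
and $u<0$ on $M\setminus\bar B_1(o)$. Fix $R>R_{LY}$ (to be chosen large) and $x\in B_{R+1}(o)\setminus B_{R_{LY}}(o)$. Set $Q\doteq|\nabla_g u|^2$. The goal becomes $Q(x)\le C_{LY}^2C_e^2\lambda^2(R)$.

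Next I would record uniform pointwise bounds on $B_2(x)$. Since $r(y)\in[R_{LY}-2,R+3]$ for $y\in B_2(x)$, assumptions (A1)--(A2) on $\lambda$ together with Lemma~\ref{lem_A7} yield two constants $C_1,C_2>0$ such that, on $B_2(x)$,
\[
V\le C_1C_e^2\lambda^2(R),\qquad \mathrm{Ric}_g\ge -C_2\lambda^2(R),
\]
provided $R_{LY}$ is large enough to ensure $B_2(x)\subset M\setminus B_{R_0}(o)$ and to make the ratios $\lambda(r(y))/\lambda(R)$ bounded by (A2). I would also need a cut-off $\phi\in C^\infty_c(B_2(x))$ with $\phi\equiv 1$ on $B_1(x)$, $0\le\phi\le 1$, and satisfying
\[
|\nabla_g\phi|^2+|\phi\Delta_g\phi|\le C_3(1+\lambda(R)),
\]
which is produced by composing a standard $C^2$ cut-off on $[0,+\infty)$ with $d_g(x,\cdot)$ and using the Laplacian comparison theorem under the Ricci lower bound just recorded.

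The core step is the Bochner identity applied to $u$:
\[
\tfrac12\Delta_g Q=|\mathrm{Hess}_gu|^2+\langle\nabla_g\Delta_gu,\nabla_gu\rangle+\mathrm{Ric}_g(\nabla_gu,\nabla_gu).
\]
Using $|\mathrm{Hess}_gu|^2\ge(\Delta_gu)^2/m=(V-Q)^2/m$, the Ricci bound, and
\[
\langle\nabla_g\Delta_gu,\nabla_gu\rangle=\langle\nabla_gV,\nabla_gu\rangle-\langle\nabla_gQ,\nabla_gu\rangle,
\]
I would look at the function $F\doteq\phi^2Q$ at a point $y_0\in B_2(x)$ where it attains its maximum. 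At $y_0$ the conditions $\nabla_gF(y_0)=0$ and $\Delta_gF(y_0)\le 0$, expanded via the two displays above, reduce (after Young's inequality to absorb the cross terms $|\nabla_g\phi||\nabla_gu|Q$ and $\phi|\nabla_gV||\nabla_gu|$) to a quadratic inequality of the schematic form
\[
\frac{\phi^2(y_0)Q(y_0)^2}{m}\le A\,\phi^2(y_0)Q(y_0)+B,
\]
with $A,B$ linear combinations of $C_e^2\lambda^2(R)$ coming from $V$, $\mathrm{Ric}_-$, $|\nabla_gV|$ (controlled via Lemma~\ref{lem_A7} and (A3)) and the cut-off derivatives. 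Solving this quadratic yields $\phi^2(y_0)Q(y_0)\le C_4C_e^2\lambda^2(R)$ for some $C_4$ independent of $x$ and $R$; restricting to $B_1(x)$ where $\phi\equiv 1$ gives $Q(x)\le C_{LY}^2C_e^2\lambda^2(R)$, which is the stated estimate.

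The main technical obstacle is bookkeeping: one must verify that every term produced by Bochner's formula and by Young's inequality--especially $|\nabla_g V|=2C_e^2|\theta'(\tr)\theta''(\tr)\nabla_g\tr|$, whose control requires (A3), Lemma~\ref{lem_A7} and $|\nabla_g\tr|\le 1$--contributes at most a constant multiple of $C_e^2\lambda^2(R)$, and not a worse power of $\lambda(R)$. Once this is in place, the Li--Yau machinery goes through without further surprises.
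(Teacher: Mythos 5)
Your route is genuinely different from the paper's: the paper disposes of this lemma in two lines by quoting a ready-made localized Li--Yau gradient estimate (Theorem 2.8 of \cite{BianchiSetti}), which directly yields $|\nabla_g\omega|^2/\omega^2\leq C'_{LY}\max\{\lambda^2(R);C_e^2(\theta'(R))^2\}$, and then converts $\theta'(R)$ into $\lambda(R)$ via Lemma \ref{lem_A7}. You instead re-derive that estimate from scratch (Bochner formula for $u=\log\omega$, cut-off, maximum principle for $\phi^2Q$). The outline of your derivation is the standard one and most of the bookkeeping you describe is fine.

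There is, however, one genuine gap, precisely at the point you flag as ``the main technical obstacle.'' You claim that $|\nabla_g V|=2C_e^2|\theta'(\tr)\theta''(\tr)||\nabla_g\tr|$ is controlled using (A3), Lemma \ref{lem_A7} and $|\nabla_g\tr|\leq 1$. Those facts control only $\theta'$; they say nothing about $\theta''(t)=t\lambda''(t)+2\lambda'(t)$, and in particular nothing about $\lambda''$. To close your quadratic inequality you need the contribution of $\langle\nabla_gV,\nabla_gu\rangle$ after Young's inequality, which is of order $|\nabla_gV|^{4/3}$, to be dominated by $(C_e^2\lambda^2(R))^2$; since $\theta'\sim\lambda$, this forces $|\theta''(\tr)|\lesssim\lambda^2(\tr)$, which is exactly Lemma \ref{lem_A8} with $j=2$ and hence requires (A4(1)) and (A4(2)). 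But Lemma \ref{lem_LY} lives inside the proof of Theorem \ref{refinedBS}, whose only standing hypotheses on $\lambda$ are (A1)--(A3); under these alone $\lambda''$ is completely unconstrained (one can perturb $\lambda$ by small high-frequency oscillations preserving (A1)--(A3) while making $\lambda''$, hence $|\nabla_gV|$, arbitrarily large). So as written your argument proves the lemma only under the additional assumption (A4(2)), i.e.\ a strictly weaker statement than the one needed for Theorem \ref{refinedBS}. To repair it you must either use a version of the gradient estimate whose hypotheses on the potential you can actually verify from (A1)--(A3) (this is what the appeal to \cite{BianchiSetti} is doing), or find a choice of auxiliary function in the maximum-principle argument that eliminates the $\langle\nabla_gV,\nabla_gu\rangle$ term rather than estimating it pointwise; note that the natural candidate $F=\phi^2(Q-V)$ merely trades $\nabla_gV$ for $\Delta_gV$, which is no better.
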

\begin{proof}
A special case of Theorem 2.8 in \cite{BianchiSetti} ensures the existence of a constant $C_{LY}'>0$ such that
\[
\frac{|\nabla_g\omega|^2}{\omega^2}\leq C'_{LY}\max\{\lambda^2(R); C_e^2(\theta'(R))^2 \}.
\]
By Lemma \ref{lem_A7}, $\theta'(R)\leq (1+M_3) \lambda(R)$ for $R$ large enough, which proves the lemma.
\end{proof} 
Reasoning as in \cite[page 6]{BianchiSetti}, thanks to Lemma \ref{lem_LY}, for every $x\in M\setminus B_3(o)$ and every $y\in B_{1/4}(x)$, we have 
\begin{equation}\label{loc-diff}
\omega(y)\geq\omega(x)e^{-\frac{1}{4}C_{LY}C_e\lambda(r(x))}.
\end{equation}
Note also that there exists $\delta>0$, $\kappa>0$ and $R_\delta>0$ such that for every $x\in M\setminus B_{R_\delta}(o)$ and every $y\in B_{1/4}(x)$,
\begin{equation}\label{loc-th}
(\theta'(\tr(y)))^2 e^{\gamma \theta(\tr(y))}\geq \delta(\theta'(\tr(x)))^2 e^{\kappa\gamma \theta(\tr(x))}.
\end{equation}
This follows from assumption (A2) and the fact that $|\tr(x)-\tr(y)|<1/4$ (because $|\nabla_g \tr|\leq 1$). Inserting \eqref{loc-diff} and \eqref{loc-th} in \eqref{loc-int} gives
\[
\omega^2(x)\delta(\theta'(\tr(x)))^2 e^{\kappa\gamma \theta(\tr(x))}e^{-\frac{1}{2}C_{LY}C_e\lambda(r(x))}\mathrm{vol}_g(B_{1/4}(x)) \leq \frac{A}{C_e^2-4\gamma^2}.
\]
From Lemma \ref{lem_SY}, we deduce that
\begin{align*}
\omega^2(x)\leq C_{SY}^{-1}\delta^{-1}\frac{A}{C_e^2-4\gamma^2}(\theta'(\tr(x)))^{-2}e^{-\kappa\gamma\theta(\tr(x))+\frac{1}{2}C_{LY}C_e\lambda(r(x))+\alpha r(x)\lambda(r(x))}
\end{align*}
whenever $r(x)>R_1 \dot =\max \{R_\alpha;R_\delta;R_{LY};3\}$. Recalling assumption (A1) and (A2), we get
\[
\theta(\tr(x))\geq \theta (C_\tr^{-1} r(x)) \geq \zeta \theta(r(x)),
\]
with $\zeta= C_\tr^{-1}M_2(C_\tr^{-1})^{-1}>0$ independent of $x$. Hence
\[
-\kappa\gamma\theta(\tr(x))+\frac{1}{2}C_{LY}C_e\lambda(r(x))+\alpha r(x)\lambda(r(x))\leq -\left[\kappa\gamma\zeta-\alpha\right] r(x)\lambda(r(x))+\frac{1}{2}C_{LY}C_e \lambda(r(x)).
\]
At this point we can make a choice of $\gamma > \frac{3\alpha}{\kappa\zeta}$, so that $\kappa\gamma\zeta-\alpha>2\alpha$, and a choice of $C_e> 2\gamma$. There exists a radius $R_2>0$ such that 
\[ -\left[\kappa\gamma\zeta-\alpha\right] r(x)\lambda(r(x))+\frac{1}{2}C_{LY}C_e \lambda(r(x))\leq \alpha r(x)\lambda(r(x))
\]
for all $x\in M\setminus B_{R_2}(o)$. Because of assumption (A1) and Lemma \ref{lem_A7}, $\theta'(t)$ is lower bounded. Hence we have obtained that 
\begin{align*}
\omega^2(x)\leq C_{SY}^{-1}\delta^{-1}\frac{A}{C_e^2-4\gamma^2}e^{-\alpha r(x)\lambda(r(x))}
\end{align*}
on $M\setminus B_{R_3}(o)$ for some $R_3\geq R_2$.
Define 
\[h(x)\dot = (\phi(x)-1)\ln(\omega(x))+\phi(x),\]
where $\phi\in C^\infty(M)$ is such that $\phi(x)\equiv 1$ in $B_2(o)$ and $\phi(x)\equiv 0$ in $M\setminus B_3(o)$. We have that on $M\setminus B_{R_3}(o)$
\begin{align*}
h(x)= -\ln(\omega(x))\geq \ln \left(C_{SY}\delta\frac{C_e^2-4\gamma^2}{A}\right) + \alpha r(x)\lambda(r(x)),
\end{align*}
from which we deduce the existence of a radius $R_4>R_3$ such that 
\begin{align}\label{ubdh}
h(x)\geq \frac\alpha 2 r(x)\lambda(r(x))
\end{align}
on $M\setminus B_{R_4}(o)$. On the other hand , since $h(o)=1$, thanks to Lemma \ref{lem_LY} we can compute
\begin{align}\label{lbdh}
h(x)=&h(o)+\int_0^{r(x)}|\nabla_g h(\sigma(s))|ds \leq 1+ R_{LY}\max_{B_{R_{LY}}}|\nabla_g h| + \int_{R_{LY}}^{r(x)} C_{LY}C_e \lambda(r(x)) ds\\
\leq& \alpha' r(x)\lambda(r(x)),\nonumber
\end{align}
for some constant $\alpha'$ independent of $x$. By definition of the smooth function $h$ and up to choose $C_h>\max\{\alpha',2/\alpha, C_{LY}C_e\}$ large enough, as a consequence of \eqref{ubdh}, \eqref{lbdh} and Lemma \ref{lem_LY}, we have thus obtained the validity of the points (i) and (ii) in the statement of Theorem \ref{refinedBS}. The point (iii) follows from \eqref{eq-omega} and a direct computation. 
\end{proof}

\subsection{Proof of Theorem \ref{th_distancefunction}: case (a)}\label{casea}
We are going to prove Theorem \ref{th_distancefunction} in the set of assumptions (a), i.e.
\begin{theorem}\label{th_distancefunctionRic}
Let $(M^m, g)$ be a complete Riemannian manifold and $o\in M$ a fixed reference point, $r(x)\doteq \mathrm{dist}_{g}(x,o)$.  Let $k\in \mathbb N^{+}$.  
If $k\geq 2$, suppose in addition that for some $i_0>0$,
\[
\ |\nabla_g^j\mathrm{Ric}_{g}|(x)\leq \lambda(r(x))^{2+j},\ 0\leq j \leq k-2,\quad\mathrm{inj}_{g}(x)\geq \frac{i_0}{\lambda(r(x))}>0\quad\mathrm{on}\,\,M,
\]
where the function $\lambda$ satisfies assumptions (A1), (A2), (A3), (A4($j$)) for $j=1,\dots,k$.
Then there exists an exhaustion function $H=H_k\in C^{\infty}(M)$ such that for some positive constant $C_k>1$ independent of $x$, we have on $M$ that 
\begin{itemize}
\item[(i)]$C_k^{-2}r(x)\leq H(x)\leq \max\left\{r(x), 1\right\}$;
\item[(ii)] for $1\leq j \leq k$, $|\nabla_g^{j} H|(x)\leq C_k^{j-1}\max\{\lambda(r(x))^{j-1},1\}$.
\end{itemize}
\end{theorem}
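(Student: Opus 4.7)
The plan is an outer induction on $k$. The base case $k=1$ is Gaffney's classical smooth approximation of $r$ by mollification in local charts, which does not need the assumptions (A1)--(A4). Assume the theorem proved at order $k-1$, so that we dispose of $H_{k-1}\in C^\infty(M)$ with $|\nabla_g^j H_{k-1}|\lesssim \max\{\lambda(r)^{j-1},1\}$ for $j=1,\dots,k-1$. I first apply Theorem \ref{refinedBS} with reference function $\tilde r=H_{k-1}$, which is admissible since $|\nabla_g H_{k-1}|\leq 1$ and $H_{k-1}\asymp r$ (if needed after a modification in a compact set to arrange $\tilde r=r$ on $B_2(o)\setminus B_{1/2}(o)$). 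The hypothesis $|\mathrm{Ric}_g|\leq \lambda^2(r)$ of case (a) implies in particular the Ricci lower bound required by Theorem \ref{refinedBS}. This yields a smooth $h$ with $h\asymp r\lambda(r)$, $|\nabla_g h|\lesssim \lambda(r)$, and
\[
\Delta_g h=|\nabla_g h|^2-C_\theta (\theta'(H_{k-1}))^2 \quad\text{outside a compact set,}
\]
where $\theta(t)=t\lambda(t)$.

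The heart of the proof is a secondary induction on $j\in\{2,\dots,k\}$ aimed at $|\nabla_g^j h|(x)\lesssim \lambda(r(x))^j$. Fix $x_0$ with $r(x_0)$ large and rescale the metric to $\tilde g=\lambda(r(x_0))^2 g$ on a neighbourhood of $x_0$. The case (a) assumptions translate, in the $\tilde g$-metric, into uniform $C^{k-2}$ bounds on $\mathrm{Ric}_{\tilde g}$ and a uniform positive lower bound on $\mathrm{inj}_{\tilde g}(x_0)$; standard results (e.g. Anderson, Hebey--Herzlich) then produce a harmonic chart $\Omega$ around $x_0$ in which $\tilde g$ is controlled in $C^{k-1,\alpha}$ and uniformly close to the Euclidean metric. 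Using the inductive hypothesis on $\nabla_g^{j'} h$ and $\nabla_g^{j'}H_{k-1}$ for $j'<j$, together with Lemmas \ref{lem_A7} and \ref{lem_A6} to bound the derivatives of the $\theta'$-term, differentiating the Laplacian identity $j-2$ times in these coordinates yields a Euclidean elliptic equation of the form
\[
\tilde g^{ab}\partial_a\partial_b(\partial^{j-2}h)=\tilde F
\]
with $\tilde F$ controlled in $C^\alpha(\Omega)$. Schauder estimates then provide a pointwise bound on $\partial^j h$ at $x_0$ in the $\tilde g$-metric, and undoing the rescaling produces the required $|\nabla_g^j h|(x_0)\lesssim \lambda(r(x_0))^j$.

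Finally, to extract a genuine first-order distance-like function, I compose $h$ with a smoothed inverse of $\theta$: since $h\asymp r\lambda(r)=\theta(r)$ and $\theta$ is monotone increasing for $t$ large, the function $H\doteq\theta^{-1}(h)$ satisfies $H\asymp r$. The derivative bounds on $H$ then follow from the Fa\`a di Bruno chain rule: each $\nabla_g^j H$ is a polynomial in $\nabla_g^{j'} h$ ($j'\leq j$) and in the derivatives $(\theta^{-1})^{(j')}(h)$, and assumptions (A3)--(A4($j$)) ensure that these latter are controlled by the expected negative powers of $\lambda$, yielding $|\nabla_g^j H|\lesssim \max\{\lambda(r)^{j-1},1\}$.

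The main difficulty I anticipate is the careful bookkeeping within the rescaled harmonic-chart argument: one must verify that every term produced when the differentiated Laplacian equation is expanded in coordinates (products of derivatives of $\tilde g$, of $h$, of $H_{k-1}$, and of Christoffel symbols), once the rescaling is undone, yields precisely the power of $\lambda$ predicted by the statement rather than an excess factor accumulating through the induction. A related delicate point is the final chain-rule computation for $H=\theta^{-1}(h)$, which requires controlling the higher derivatives of $\theta^{-1}$ through (A4($j$)) combined with the elementary lemmas of Subsection \ref{subsec_assumptions}.
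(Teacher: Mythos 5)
Your outline reproduces the paper's proof essentially step for step: outer induction on $k$ starting from Gaffney, Theorem \ref{refinedBS} applied with $\tilde r=H_{k-1}$ to produce $h$ solving $\Delta_g h=|\nabla_g h|^2-C_\theta(\theta'(H_{k-1}))^2$ outside a compact set, rescaling by $\lambda(r(x)+1)^2$ together with Anderson's harmonic-radius estimates, an inner induction closed by interior Schauder estimates for $\partial^{j-2}\hat h$, and finally $H=\theta^{-1}\circ h$ with the Fa\`a di Bruno expansion controlled through Lemmas \ref{lem_A7}--\ref{lem_A8}.

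There is, however, one step that your sketch asserts rather than justifies, and it is precisely where the paper has to work hardest: the claim that the right-hand side $\tilde F$ of the differentiated equation is ``controlled in $C^\alpha(\Omega)$''. The top-order contributions to $\tilde F$ (for instance $\hat g^{ab}\,\partial^{j-2}\partial_a\hat h\,\partial_b\hat h$, and the commutator terms $\partial\hat g^{ab}\,\partial^{j-3}\partial^2_{ab}\hat h$) contain $(j-1)$-th partial derivatives of $\hat h$. Their H\"older seminorm cannot be obtained by differentiating once more, since that would require exactly the $L^\infty$ bound on $\partial^{j}\hat h$ that the Schauder step is supposed to produce; the inner inductive hypothesis only gives $L^\infty$ control up to order $j-1$. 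The paper closes this circle by estimating $[\partial^{j-1}\hat h]_{C^{0,\alpha}}$ through the Euclidean Sobolev embedding $W^{1,p}\hookrightarrow C^{0,\alpha}$ with $p=m/(1-\alpha)$ combined with the interior $L^p$ Calder\'on--Zygmund inequality, which reduces everything to $\|\Delta_0\partial^{j-2}\hat h\|_{L^p}\leq C\|\hat f\|_{L^\infty}$ --- a quantity the induction does control (see \eqref{Sobol}--\eqref{CZ}). Without this (or an equivalent $L^p$-to-H\"older bootstrap), the Schauder step in your inner induction does not close; the rest of your plan, including the rescaling bookkeeping and the $\theta^{-1}$ chain rule, matches the paper and goes through as you describe.
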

The proof is done by induction on $k$. For $k=1$, the function $\lambda$ is not involved in the statement, and this is the content of the well-known theorem by Gaffney, \cite{Gaffney,GWAnnSci}. Assume now that for some $k\geq 2$ and some $i_0>0$
\[
\ |\nabla_g^j\mathrm{Ric}_{g}|(x)\leq \lambda(r(x))^{2+j},\ 0\leq j \leq k-2,\quad\mathrm{inj}_{g}(x)\geq \frac{i_0}{\lambda(r(x))}>0\quad\mathrm{on}\,\,M.
\]
As induction hypothesis we suppose that the result holds true for $k-1$, i.e. there exists a distance-like function $H_{k-1}\in C^{\infty}(M)$ such that for some positive constant $C_{k-1}>1$ independent of $x$, we have on $M$ that 
\begin{itemize}
\item[(i)]$C_{k-1}^{-2}r(x)\leq H_{k-1}(x)\leq \max\left\{r(x), 1\right\}$;
\item[(ii)] for $1\leq j \leq k-1$, $|\nabla_g^{j} H_{k-1}|(x)\leq C_{k-1}^{j-1}\max\{\lambda(r(x))^{j-1},1\}$.
\end{itemize}
By Theorem \ref{refinedBS} we have the following

\begin{proposition}\label{BS}
In the assumptions of Theorem \ref{th_distancefunctionRic}, there exists a function $h=h_k\in C^{\infty}(M)$ and a constant $C_h>1$ such that 
\begin{itemize}
\item[(i)] $C_h^{-1}r(x)\lambda(r(x))\leq h(x) \leq C_h\max\{1;r(x)\lambda(r(x))\}$;
\item[(ii)] $|\nabla_g h|_g\leq C_h\lambda(r(x))$ on $M$;
\item[(iii)] $\Delta_g h = |\nabla_g h|^2_g-C_\theta (\theta'(H_{k-1}(x)))^2$ on $M\setminus B_{R_\theta}(o)$ for some constants $C_\theta>0$ and $R_\theta >0$.
\end{itemize}
\end{proposition}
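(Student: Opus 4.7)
The plan is to apply Theorem \ref{refinedBS} to a suitably constructed first-order distance-like function $\tilde r$ that coincides with $H_{k-1}$ outside a compact set. First, the $j=0$ case of the curvature hypothesis of Theorem \ref{th_distancefunctionRic} gives $|\mathrm{Ric}_g|(x)\leq \lambda(r(x))^2$, and hence $\mathrm{Ric}_g(x)\geq -\lambda^2(r(x))$ as a symmetric bilinear form. The assumptions (A1), (A2), (A3) on $\lambda$ are already part of the hypotheses of Theorem \ref{th_distancefunctionRic}, so the only thing left to check before invoking Theorem \ref{refinedBS} is the existence of a suitable exhaustion function $\tilde r$.

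The inductive hypothesis provides $H_{k-1}\in C^\infty(M)$ with $C_{k-1}^{-2}r(x)\leq H_{k-1}(x)\leq \max\{r(x),1\}$ and $|\nabla_g H_{k-1}|\leq 1$; this already gives the comparability and the Lipschitz bound required of $\tilde r$. The only missing property is that $\tilde r \equiv r$ on $B_2(o)\setminus B_{1/2}(o)$. Since $\mathrm{inj}_g(o)>0$, the function $r$ is smooth on a punctured ball around $o$; up to a harmless rescaling of the specific radii $2$ and $1/2$ (which play no essential role in the proof of Theorem \ref{refinedBS}), one can then glue $r$ near $o$ with $H_{k-1}$ outside a larger ball via a partition of unity, producing $\tilde r\in C^\infty(M\setminus\{o\})$ which is $C_\tr$-comparable to $r$, satisfies $|\nabla_g\tilde r|\leq 1$, and coincides with $H_{k-1}$ outside some compact set $K$. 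Both $r$ (near $o$) and $H_{k-1}$ (away from $o$) are $1$-Lipschitz and mutually comparable on the interpolation annulus, so the gluing preserves these two properties up to harmless constants.

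Applying Theorem \ref{refinedBS} with this choice of $\tilde r$ then yields a function $h\in C^\infty(M)$ and a constant $C_h>1$ fulfilling properties (i) and (ii) of Proposition \ref{BS} verbatim, together with
\[
\Delta_g h \;=\; |\nabla_g h|^2 \;-\; C_\theta \bigl(\theta'(\tilde r(x))\bigr)^2
\]
on $M\setminus B_{R_\theta}(o)$, for some $R_\theta>0$ and $C_\theta>0$. Enlarging $R_\theta$ if necessary so that $B_{R_\theta}(o)\supset K$, the identity $\tilde r=H_{k-1}$ on the complement transforms the right-hand side into $|\nabla_g h|^2 - C_\theta(\theta'(H_{k-1}(x)))^2$, which is exactly property (iii).

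The only technical obstacle is the local modification used to define $\tilde r$, but this is a standard partition-of-unity construction; once in place, the proposition is a clean application of Theorem \ref{refinedBS}, and the crucial non-trivial input it encapsulates for the inductive step of Theorem \ref{th_distancefunctionRic} is the appearance of $\theta'(H_{k-1})$, rather than $\theta'(r)$, on the right-hand side of (iii).
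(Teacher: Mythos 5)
Your proposal is correct and is essentially the paper's own argument: Proposition \ref{BS} is obtained there simply by invoking Theorem \ref{refinedBS} with the first-order distance-like function taken to be (a modification of) $H_{k-1}$, which is precisely why the authors stress that their version of the Bianchi--Setti result accepts \emph{general} $\tilde r$ rather than one $L^\infty$-close to $r$. The gluing near $o$ that you spell out is the intended (and harmless) technical step, since the normalization $|\nabla_g\tilde r|\leq 1$ and the annulus condition enter the proof of Theorem \ref{refinedBS} only through uniform constants.
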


Recall that a local coordinate system $\left\{x^{i}\right\}_{i=1}^m$ is said to be harmonic if for any $i$, $\Delta_{g}x^{i}=0$. 
As a consequence of \cite{Anderson} we have the validity of the following 

\begin{proposition}\label{HarmRadEst}
Let $\alpha\in(0,1)$, $Q>1$, $\delta>0$. Let $(M^m,g)$ be a smooth Riemannian manifold, and $\Omega$ an open subset of $M$. Set
\[
\ \Omega(\delta)=\left\{x\in M\quad\mathrm{s.t.}\quad d_{g}(x,\Omega)<\delta\right\}.
\]
Suppose that
\[
\ |\nabla_{g}^j\mathrm{Ric}_{g}|(x)\leq 1,\ 0\leq j\leq k-2\quad\mathrm{and}\quad\mathrm{inj}_{g}(x)\geq i\quad\mathrm{for\,\, all}\quad x\in\Omega(\delta),
\]
then there exists a positive constant $C_{HR}=C_{HR}(m,Q,k,\alpha,\delta, i)$, such that  on the geodesic ball $B_{C_{HR}}(x)$ of center $x$ and radius $C_{HR}$, there is a centered harmonic coordinate chart such that the metric tensor is $C^{k-1,\alpha}$ controlled in these coordinates. In particular, if $g_{ij}$, $i,j=1,\ldots,m$, are the components of $g$ in these coordinates, then
\begin{enumerate}
\item $Q^{-1}\delta_{ij}\leq g_{ij}\leq Q\delta_{ij}$ as bilinear forms;
\item $\sum_{1\leq|\gamma|\leq k-1}r_{H}^{|\gamma|}\sup_y\left|\partial_{\gamma}g_{ij}(y)\right|\leq Q-1$
\end{enumerate}
\end{proposition}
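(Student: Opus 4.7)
My plan is to deduce the proposition by combining Anderson's lower bound on the $C^{1,\alpha}$-harmonic radius with an elliptic bootstrap argument to promote the control to $C^{k-1,\alpha}$. The argument proceeds in three main steps.

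First, I would apply the main theorem of \cite{Anderson}, which under the sole assumptions $|\mathrm{Ric}_g|\leq 1$ and $\mathrm{inj}_g\geq i$ on the $\delta$-neighborhood $\Omega(\delta)$ provides a radius $r_1=r_1(m,Q,\alpha,\delta,i)>0$ such that for every $x\in\Omega$ a harmonic coordinate chart is defined on the geodesic ball $B_{r_1}(x)$ and satisfies the $C^{1,\alpha}$ quantitative control (1) together with the $|\gamma|=1$ part of (2). This is the base case of the bootstrap and uses only the $j=0$ hypothesis on $\mathrm{Ric}_g$.

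Next, I would exploit the classical identity valid in harmonic coordinates,
\[
g^{ab}\partial_a\partial_b g_{ij} \;=\; -2\mathrm{Ric}_{ij} + Q_{ij}(g,\partial g),
\]
where $Q_{ij}$ is a quadratic form in $\partial g$ whose coefficients are rational functions of the entries of $g$. Since the first step yields uniform $C^{1,\alpha}$ bounds on $g$ and $g^{-1}$ and since $|\mathrm{Ric}_g|\leq 1$, interior Schauder estimates, applied on concentric balls shrunk by a universal factor, produce a $C^{2,\alpha}$ control of the metric. Then I would iterate: differentiating the displayed equation $\ell$ times (for $\ell=1,\ldots,k-2$) yields an elliptic equation for $\partial^\ell g_{ij}$ whose right-hand side involves $\nabla^\ell\mathrm{Ric}_g$ (bounded by hypothesis for $\ell\leq k-2$) together with lower-order derivatives of $g$ already controlled at the previous step. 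The standard interior Schauder theory then promotes the control to $C^{\ell+2,\alpha}$ on a slightly shrunken ball, and after $k-2$ such iterations one reaches $C^{k-1,\alpha}$ control, the total shrinkage factor being uniform in $x$ and depending only on $m$, $Q$, $k$, $\alpha$, $\delta$ and $i$. A final rescaling of the coordinates allows one to arrange the quantitative estimates in the exact normalized form (1) and (2).

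The main technical obstacle is the careful book-keeping of the Schauder constants and of the nested shrinking radii across the $k-2$ iterations, so that the final $C^{k-1,\alpha}$-harmonic radius $C_{HR}$ still depends only on the listed parameters. A cleaner alternative, which I would use as a shortcut if space permits, is to invoke directly a higher-order version of Anderson's theorem (along the lines developed, e.g., by Hebey-Herzlich), in which the $C^{k-1,\alpha}$-harmonic radius is estimated in one shot in terms of $m$, $k$, $\alpha$, $\delta$, $i$ and the $C^{k-2}$-bound on $\mathrm{Ric}_g$; Proposition \ref{HarmRadEst} is then essentially a specialization of that statement to our normalized hypotheses.
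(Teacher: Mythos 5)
Your proposal is correct and coincides with what the paper does: Proposition \ref{HarmRadEst} is stated there without proof, simply as a consequence of \cite{Anderson}, and the intended justification is exactly your two-step argument (Anderson's $C^{1,\alpha}$ harmonic radius bound from $|\mathrm{Ric}_g|\leq 1$ and $\mathrm{inj}_g\geq i$, followed by a Schauder bootstrap on the harmonic-coordinate equation $g^{ab}\partial_a\partial_b g_{ij}=-2\mathrm{Ric}_{ij}+Q_{ij}(g,\partial g)$ using the bounds on $\nabla^j\mathrm{Ric}_g$ for $j\leq k-2$). The only caveat is minor bookkeeping: to reach $C^{k-1,\alpha}$ one differentiates the equation up to $k-3$ times, and the H\"older bound on $\partial^{k-3}\mathrm{Ric}_{ij}$ is what consumes the top hypothesis $|\nabla^{k-2}\mathrm{Ric}_g|\leq 1$, which matches the stated assumptions.
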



By assumption, we have that
\[
\ |\nabla_{g}^j\mathrm{Ric}_{g}|(x)\leq \lambda(r(x))^{2+j},\ 0\leq j \leq k-2,\quad\mathrm{inj}_{g}(x)\geq \frac{i_0}{\lambda(r(x))}>0\quad\mathrm{on}\,\,M.
\]

Fix $x\in M\setminus B_2^g(o)$ and set $$\lambda_1=\lambda_1(x)\doteq\lambda(r(x)+1).$$ Since $\lambda$ is increasing, for all $y\in B_1^g(x)$ we have 
\[
\ |\nabla_{g}^j\mathrm{Ric}_{g}|(y)\leq \lambda_1^{2+j},\ 0\leq j\leq k-2,\quad\mathrm{and}\quad\mathrm{inj}_{g}(y)\geq \frac{i_0}{\lambda_1}.
\]
We define a new rescaled metric $g_\lambda\doteq \lambda_1^2g$ on $M$, so that for all $y\in B_{\lambda_1}^{g_\lambda}(x)$ we have 
\[
\ |\nabla_{g_{\lambda}}^j\mathrm{Ric}_{g_{\lambda}}|(y)\leq 1,\ 0\leq j\leq k-2,\quad\mathrm{and}\quad\mathrm{inj}_{g_{\lambda}}(y)\geq \lambda_{1}\mathrm{inj}_{g}(y)\geq = i_0.
\]

According to Proposition \ref{HarmRadEst} ( applied with $\delta=1/2$) there exists a constant $C_{HR}=C_{HR}(m,Q,i_0,k)$ such that on $B_{C_{HR}}^{g_\lambda}(x)$ there exists a centered harmonic chart $\varphi_H=(y^1,\dots,y^m):B_{C_{HR}}^{g_\lambda}(x)\to U\subset \mathbb R^m$ such that $\varphi_{H}(x)=0\in\R^m$ and, setting $\hat g_\l=g_\lambda \circ  \varphi_H^{-1}$, it holds

\begin{enumerate}
\item[$(1_{HC})$] $Q^{-1}\delta_{ij}\leq \ton{\hat g_\l}_{ij}\leq Q\delta_{ij}$ as bilinear forms;
\item[$(2_{HC})$] $\sum_{1\leq|\gamma|\leq k-1}C_{HR}^{|\gamma|}\sup_y\left|\partial_{\gamma}\ton{\hat g_\l}_{ij}(y)\right|\leq Q-1$,
\end{enumerate}
Since 
\[
\ \partial_{q}\hat g_\l^{ij}=-\partial_{q}\ton{\hat g_\l}_{lk}\hg_\l^{il}\hg_\l^{kj},
\]
we have also that
\begin{itemize}
\item[$(1'_{HC})$] $Q^{-1}\delta^{ij}\leq \hat g_\l^{ij}\leq Q\delta^{ij}$;
\item[$(2'_{HC})$] $\sum_{1\leq|\gamma|\leq k-1}\sup_y\left|\partial_{\gamma}\hg_\l^{ij}(y)\right|\leq C(Q)$,
\end{itemize}
for some constant $C(Q)$, depending only on $Q$.
\medskip

For $\beta\doteq C_{HR}/Q$, on $\mathbb B_{\beta}$ we set $\hat{g}=g\circ\varphi_{H}^{-1}$, $\hat h =h\circ \varphi_H^{-1}$ and $\hat H =H_{k-1}\circ \varphi_H^{-1}$.
\medskip

We need the following fundamental lemma.

\begin{lemma}\label{lem_main}
Let $i$ and $j$ be integers in $\{1,\dots,m\}$. Let $0\leq q \leq k-1$ and let $\gamma=(\gamma_1,\dots,\gamma_{q-1})\in \{1,\dots,m\}^{q-1}$ be a multi-index (possibly empty for $q=0,1$). There exists a positive constant $C>0$ such that 
\begin{equation}\label{1_lem_main}
\forall \,v\in\B_{\beta},\qquad
|\partial_i\hat h|(v)\leq C,
\end{equation}
and, if $q\geq 1$,
\begin{equation}\label{2_lem_main}
\forall \,v\in\B_{4^{-q}\beta},\qquad \left|\partial^2_{ij}\partial^{q-1}_{\gamma_1\cdots\gamma_{q-1}}\hat h\right| (v)\leq C.
\end{equation}
\end{lemma}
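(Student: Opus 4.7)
\textit{Proof proposal.} The plan is to prove \eqref{1_lem_main} and \eqref{2_lem_main} jointly by induction on $q$, bootstrapping through a single elliptic equation for $\hat h$. The conformal identity $\Delta_{g_{\lambda}} = \lambda_{1}^{-2}\Delta_{g}$ (valid since $\lambda_{1}$ is a \emph{constant}), combined with the fact that in a harmonic chart of $g_{\lambda}$ the Laplace--Beltrami operator reduces to $\Delta_{g_{\lambda}} = \hat g_{\lambda}^{ij}\partial^{2}_{ij}$, turns the PDE of Proposition \ref{BS}(iii) into the master equation
\[
\hat g_{\lambda}^{ij}\,\partial^{2}_{ij}\hat h \;=\; \hat g_{\lambda}^{ij}\,\partial_{i}\hat h\,\partial_{j}\hat h \;-\; \lambda_{1}^{-2}\,C_{\theta}\,\bigl(\theta'(\hat H)\bigr)^{2} \qquad \text{on } \B_{\beta}.
\]
The case $q=0$ of \eqref{1_lem_main} is immediate from Proposition \ref{BS}(ii): on $\varphi_{H}^{-1}(\B_{\beta})\subset B_{1}^{g}(x)$ one has $|\nabla_{g} h|_{g}\le C_{h}\lambda_{1}$ (using monotonicity of $\lambda$), and pulling back through the chart gives $\hat g^{ij}\partial_{i}\hat h\,\partial_{j}\hat h \le C_{h}^{2}\lambda_{1}^{2}$; since $\hat g^{ij}=\lambda_{1}^{2}\hat g_{\lambda}^{ij}$ and $\hat g_{\lambda}^{ij}\ge Q^{-1}\delta^{ij}$ by $(1'_{HC})$, this yields $|D\hat h|\le Q^{1/2}C_{h}$ on $\B_{\beta}$.

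For the inductive step I strengthen \eqref{2_lem_main} to include a uniform H\"older seminorm, so that interior Schauder estimates can be iterated. Assuming inductively that $\hat h,\,\partial\hat h,\dots,\partial^{q}\hat h$ are $C^{0,\alpha}$-controlled on the nested balls $\B_{4^{-(j-1)}\beta}$, I apply $\partial^{q-1}_{\gamma}$ to the master equation. Leibniz's rule produces
\[
\hat g_{\lambda}^{ij}\,\partial^{q+1}_{ij\gamma}\hat h \;=\; \tilde F_{q},
\]
where $\tilde F_{q}$ is a polynomial expression in $\partial^{\le q-1}\hat g_{\lambda}^{ij}$ (controlled via $(2_{HC})$), in $\partial^{\le q}\hat h$ (by the inductive hypothesis), and in the derivatives $\partial^{\le q-1}\hat H$ together with $\Theta^{(\ell)}(\hat H)$ for $\ell\le q-1$, where $\Theta(t)=(\theta'(t))^{2}$. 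To bound the last contribution I expand $\partial^{q-1}_{\gamma}\Theta(\hat H)$ via Fa\`a di Bruno: each term has the form $\Theta^{(|m|)}(\hat H)\prod_{j}(\partial^{j}\hat H)^{m_{j}}$ with $\sum_{j} j\,m_{j}\le q-1\le k-2$. Lemma \ref{lem_A6} gives $\Theta^{(|m|)}(\hat H)\le M_{6(|m|)}\lambda_{1}^{|m|+2}$; meanwhile the outer induction on $k$ applied to $H_{k-1}$ combined with the rescaling identity $|\nabla^{j}_{g_{\lambda}}H_{k-1}|_{g_{\lambda}}=\lambda_{1}^{-j}|\nabla^{j}_{g}H_{k-1}|_{g}\le C_{k-1}^{j-1}\lambda_{1}^{-1}$ and the bounded Christoffels of $\hat g_{\lambda}$ produces the key estimate $|\partial^{j}\hat H|\le C\lambda_{1}^{-1}$ for $1\le j\le k-1$. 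Hence each term is controlled by $C\,\lambda_{1}^{|m|+2}\cdot\lambda_{1}^{-|m|}=C\,\lambda_{1}^{2}$, and after multiplication by the prefactor $\lambda_{1}^{-2}$ the full contribution of $(\theta'(\hat H))^{2}$ to $\tilde F_{q}$ is uniformly bounded (and $C^{0,\alpha}$, since all the ingredients are).

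Standard interior Schauder estimates applied to $v=\partial^{q-1}_{\gamma}\hat h$, which solves a uniformly elliptic equation with $C^{0,\alpha}$ coefficients and $C^{0,\alpha}$ right-hand side on the annulus between $\B_{4^{-(q-1)}\beta}$ and $\B_{4^{-q}\beta}$, then give $\|v\|_{C^{2,\alpha}(\B_{4^{-q}\beta})}\le C$, closing the induction. The main obstacle is not the elliptic regularity itself, but the bookkeeping required to keep constants uniform in $x$ (equivalently, in $\lambda_{1}$): each time one differentiates $\Theta(\hat H)$, the dangerous factor $\lambda_{1}^{|m|+2}$ must be exactly compensated by the $\lambda_{1}^{-|m|}$ coming from the derivatives of $\hat H$ and by the overall $\lambda_{1}^{-2}$ prefactor in the master equation. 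This precise cancellation is precisely what the combined assumptions $(A3)$, $(A4(j))$ on $\lambda$ (through Lemma \ref{lem_A6}) and the scale-invariant harmonic coordinate bounds $(1_{HC})$--$(2_{HC})$ are designed to deliver.
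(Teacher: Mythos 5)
Your overall strategy is the paper's: differentiate the equation of Proposition \ref{BS}(iii) in the rescaled harmonic chart, observe the exact cancellation $\Theta^{(j)}(\hat H)\prod\partial^{\bullet}\hat H \lesssim \lambda_1^{j+2}\cdot\lambda_1^{-j}=\lambda_1^{2}$ against the $\lambda_1^{-2}$ prefactor (this is indeed the heart of the matter, and you identify it correctly), and close by interior Schauder estimates on shrinking balls. Where you diverge is the bookkeeping of H\"older norms: you strengthen the induction hypothesis to carry $C^{0,\alpha}$ control of $\partial^{\le q}\hat h$, whereas the paper's induction carries only $L^\infty$ bounds and recovers the missing H\"older seminorm of the top-order terms $\partial^{\bar q+1}\hat h$ at each step via the Sobolev embedding $W^{1,p}\hookrightarrow C^{0,\alpha}$ combined with the Euclidean Calder\'on--Zygmund inequality applied to $\partial^{\bar q}\hat h$. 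Your variant is legitimate and arguably cleaner in the inductive step, but it shifts all the difficulty to the base case, and that is where your argument has a genuine gap: to start the induction you need $[\partial\hat h]_{C^{0,\alpha}(\B_\beta)}\le C$ uniformly in $x$, and this does \emph{not} follow from the $L^\infty$ gradient bound \eqref{1_lem_main}. It requires exactly the $W^{2,p}$-regularity/Sobolev-embedding step that the paper deploys (the right-hand side of the undifferentiated equation is only $L^\infty$, so Schauder does not apply directly at the first step); you cannot simply "assume inductively" $C^{0,\alpha}$ control at level $q=1$.

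A second concrete point you do not address is the inhomogeneous term $\|v\|_{L^\infty}$ in the Schauder estimate when $v=\partial^{q-1}\hat h$ with $q=1$, i.e.\ $v=\hat h$ itself: by Proposition \ref{BS}(i), $\|\hat h\|_{L^\infty(\B_\beta)}\sim r(x)\lambda(r(x))$ is unbounded in $x$, so the constant produced by a naive application of the Schauder estimate would not be uniform. This is precisely what the paper's subtraction of the Taylor polynomial $\tilde h(v)=\hat h(v)-\sum_{j\le\bar q}D^j\hat h(w)(v,\dots,v)/j!$ is designed to fix (it replaces $\|\partial^{\bar q}\hat h\|_{L^\infty}$ by an oscillation, controlled by $\|D^{\bar q+1}\hat h\|_{L^\infty}$ via the mean value theorem). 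Both gaps are repairable with standard tools --- and indeed with the very tools the paper uses --- but as written your base case does not close, and the rest of the induction rests on it.
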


Before proving Lemma \ref{lem_main}, let us point out the following computational lemma which will be used repeatedly.
\begin{lemma}\label{lem_conf}
Let $F\in C^\infty(M)$. Set $\hat F\doteq F\circ\varphi_{H}^{-1}$ and let $v\in\B_{\beta}$. Let $d\leq k$. Then there exists a real positive constant $C_1$ independent of $x$ such that 
\begin{enumerate}
\item If $\abs{\p^n_{e_1\cdots e_n}\hat F}(v) \leq C_0$ for some constant $C_0>0$ (possibly depending on $x$), for all $n\in\{1,\dots,d\}$ and for all multi-index $(e_1,\dots,e_n)\in \{1,\dots,m\}^n$, then $\abs{\nabla_{\hg_\l}^n\hat F}_{\hg_\l}(v)\leq C_1C_0$ for all $n\in\{1,\dots,d\}$.
\item If $\abs{\nabla_{\hg_\l}^n\hat F}_{\hg_\l}(v) \leq C_0$ for some constant $C_0>0$ (possibly depending on $x$) and for all $n\in\{1,\dots,d\}$, then $\abs{\p^n_{e_1\cdots e_n}\hat F}(v)\leq C_1C_0$ for all $n\in\{1,\dots,d\}$ and for all multi-index $(e_1,\dots,e_n)\in \{1,\dots,m\}^n$.
\item $\l_1^{d}\abs{\nabla_{\hg_\l}^d\hat F}_{\hg_\l}= \abs{\nabla_{\hg}^d\hat F}_{\hg}$.
\item $ \abs{\nabla_{\hg}^d\hat F}_{\hg}(v)=\abs{\nabla_{g}^d F}_{g}(\varphi_H^{-1}(v))$ .
\end{enumerate}
\end{lemma}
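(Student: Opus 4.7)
The plan is to handle the structural points (4) and (3) first, and then obtain (1) and (2) by standard manipulation of covariant derivatives in the harmonic chart.

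Point (4) is essentially tautological: by construction $\varphi_H^{-1}\colon(\B_\beta,\hg)\to(\varphi_H^{-1}(\B_\beta),g)$ is an isometry, and $\hat F=F\circ\varphi_H^{-1}$ is the pullback of $F$. Since covariant differentiation commutes with isometric pullback and fiber norms on $(0,d)$-tensors are preserved, (4) follows. For (3), the key observation is that $\hg_\l=\l_1^2\hg$ is a \emph{constant} conformal rescaling: $\l_1=\l_1(x)$ does not depend on the point $v\in\B_\beta$. A direct check from the formula $\Gamma^k_{ij}=\tfrac{1}{2}g^{kl}(\p_i g_{jl}+\p_j g_{il}-\p_l g_{ij})$ shows that Christoffel symbols are invariant under constant conformal rescalings, hence the Levi-Civita connections of $\hg_\l$ and $\hg$ coincide, and $\nabla_{\hg_\l}^d\hat F=\nabla_{\hg}^d\hat F$ as $(0,d)$-tensors. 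Raising the $d$ indices with $\hg_\l^{ij}=\l_1^{-2}\hg^{ij}$ yields $\abs{\nabla_{\hg_\l}^d\hat F}_{\hg_\l}=\l_1^{-d}\abs{\nabla_{\hg}^d\hat F}_{\hg}$, which is exactly (3).

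For (1) and (2), I would use the inductive expansion
\[
(\nabla_{\hg_\l}^n\hat F)_{i_1\cdots i_n}=\p^n_{i_1\cdots i_n}\hat F + R_n,
\]
where $R_n$ is a universal polynomial combination of partial derivatives of $\hat F$ of order at most $n-1$ and of Christoffel symbols of $\hg_\l$ together with their derivatives up to order $n-2$. By $(1_{HC})$--$(2'_{HC})$, the components of $\hg_\l^{\pm 1}$ and the Christoffel symbols with their derivatives up to order $k-2$ are uniformly bounded on $\B_\beta$ by constants depending only on $m,Q,k,\alpha,i_0$, and \emph{not} on $x$. Direction (1) then follows by bounding every term of $R_n$ by a multiple of $C_0$ and raising the indices with $\hg_\l^{ij}$. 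Direction (2) is obtained by solving the identity for the leading term $\p^n_{i_1\cdots i_n}\hat F$ and inducting on $n$, starting from the trivial base $n=1$ in which $(\nabla_{\hg_\l}\hat F)_i=\p_i\hat F$. The only delicate aspect is ensuring that the final constant $C_1$ is independent of $x$, hence of $\l_1$; this is guaranteed by the universal character of the bounds coming from Proposition \ref{HarmRadEst}, so that no factor of $\l_1$ can slip into $C_1$ through the Christoffel symbols of $\hg_\l$ or their derivatives.
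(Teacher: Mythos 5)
Your proposal is correct and follows essentially the same route as the paper: points (3) and (4) are handled by the constant-conformal-rescaling/coordinate-representation observations, and (1)--(2) by expanding $\nabla^n_{\hg_\l}\hat F$ in coordinates as the top-order partial derivative plus a universal combination of lower-order partials and Christoffel symbols, all uniformly controlled by the harmonic-chart bounds $(1_{HC})$--$(2'_{HC})$ independently of $x$. The paper organizes (1)--(2) as a single induction on $d$ and passes between components and fiber norms using $Q^{-1}\delta\leq\hg_\l\leq Q\delta$, but this is the same argument.
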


\begin{proof}
As for (1) and (2) we proceed by induction on $d$. For $d=0$, the result is trivial with $C_1=1$. Suppose that we have proved the assertions for some $d-1$ and let us prove it for $d$. Clearly the only nontrivial case is $n=d$.

$(1)$. By definition
\[
\abs{\nabla_{\hg_\l}^d\hat F}_{\hg_\l}^2=\ton{\nabla_{\hg_\l}^d\hat F}_{e_1\dots e_d}\ton{\nabla_{\hg_\l}^d\hat F}_{f_1\dots f_d}\hg_\l^{e_1f_1} \cdots \hg_\l^{e_df_d}
\]
The expression in coordinates of $\ton{\nabla_{\hg_\l}^d\hat F}_{e_1\dots e_d}$ can be written as a linear combination of terms, each of which is given by the product of factors of the form $\p_{\alpha}\hat{F}$ with factors of the form $\p_{\beta}(^{\hat{g}_{\lambda}}\Gamma_{ij}^k)$, where the multi-indexes $\alpha$ and $\beta$ have orders, respectively, $|\alpha|\leq d$ and $|\beta|\leq d-2$. Note that $\p^{\beta}(^{\hat{g}_{\lambda}}\Gamma_{ij}^k)$ contains partial derivatives of $\hg_\lambda$ and $\hg_\lambda^{-1}$ of order at most $d-1$, which are uniformly bounded as soon as $d\leq k$ by the properties of the harmonic chart $\varphi_{H}$.  

$(2)$. Reasoning as for the previous implication, we get 
\begin{align*}
\abs{\p^d_{e_1\cdots e_n}\hat F}(v)&=\abs{\ton{D^d\hat F}_{e_1\dots e_d}}(v)\\
&\leq \abs{\ton{\nabla_{\hg_\l}^d\hat F}_{e_1\dots e_d}}(v)+C\left(\abs{\nabla_{\hg_\l}^{d-1}\hat F}+\dots+\abs{\nabla_{\hg_\l}^{1}\hat F}\right) \\
&\leq \abs{\nabla_{\hg_\l}^d\hat F}_{\hg_\l}\abs{\p_{e_1}}_{\hg_\l} \cdots \abs{\p_{e_d}}_{\hg_\l}+C\\
&\leq C.
\end{align*}

$(3)$. Note that
\[\ton{\nabla_{\hg_\l}^d\hat F}_{e_1\dots e_d}=\ton{\nabla_{\hg}^d\hat F}_{e_1\dots e_d}.\]
Accordingly,
\begin{align*}
\abs{\nabla_{\hg_\l}^d\hat F}_{\hg_\l}^2
&=\ton{\nabla_{\hg_\l}^d\hat F}_{e_1\dots e_d}\ton{\nabla_{\hg_\l}^d\hat F}_{f_1\dots f_d}\hg_\l^{e_1f_1} \cdots \hg_\l^{e_df_d}\\
&= \ton{\nabla_{\hg}^d\hat F}_{e_1\dots e_d}\ton{\nabla_{\hg}^d\hat F}_{f_1\dots f_d}\l_1^{-2}\hg^{e_1f_1} \cdots \l_1^{-2}\hg^{e_df_d} \\
&=\l_1^{-2d} \abs{\nabla_{\hg}^d\hat F}_{\hg}^2.
\end{align*}

$(4)$. By definition of $\hg$.
\end{proof}

\begin{proof}[Proof (of Lemma \ref{lem_main})]

The property \eqref{1_lem_main} is a direct consequence of (ii) in Proposition \ref{BS} and of Lemma \ref{lem_conf}. In particular, Lemma \ref{lem_main} is verified when $q=0$. It remains to prove \eqref{2_lem_main} for $q\geq 1$. Let us proceed by induction on $q$.
Suppose that the lemma is proved for all the integers $q$ between $0$ and ${\bar q}$, with $0\leq {\bar q} \leq k-2$. Let us prove it for ${\bar q}+1$. 

Define $\Theta(t)\dot=(\theta'(t))^2=(t\lambda'(t)+\lambda(t))^2$. By Proposition \ref{BS} (iii), $h$ satisfies 
\[
\Delta_gh(x)=|\nabla_g h|^2(x)-C_\theta \Theta(H_{k-1}(x))
\]
on $M\setminus B_{R_\theta}(o)$, which reads in harmonic coordinates as
\[
\lambda_1^{2}\hat g_\l^{ij}(v)\partial^2_{ij}\hat h (v) = \lambda_1^{2}\hg_\l^{ij}(v)\partial_i\hh(v)\partial_j\hh(v)-C_\theta \Theta(\hH(v)),\qquad \forall v\in \mathbb B_\beta.
\]
Applying the differential operator $\partial^{\bar q}_{\gamma_1\cdots\gamma_{\bar q}}$ to both sides, and multiplying by $\lambda_1^{-2}$, we get on $\B_{\beta}$
\begin{align}\label{eq_f}
\hg_\l^{ij}\partial^2_{ij}\left(\p^{\bar q}_{\gamma_1\cdots\gamma_{\bar q}}\hh\right)
=& \sum \left(\partial^n_{a_1\cdots a_n}\hg_\l^{ij}\right)\left(\p^l_{b_1\cdots b_l}\partial_i\hh\right)\left(\p^{{\bar q}-n-l}_{c_1\cdots c_{{\bar q}-n-l}}\partial_j\hh\right)
-C_\theta \lambda_1^{-2} \p^{\bar q}_{\gamma_1\cdots\gamma_{\bar q}}(\Theta(\hH))\\
&- \sum \left(\partial^n_{a_1\cdots a_n} \p^2_{ij} \hh\right)\left( \p^{{\bar q}-n}_{b_1\cdots b_{{\bar q}-n}}\hg_\l^{ij}\right) \nonumber\\
\doteq& \hat f.\nonumber
\end{align}
Here the first summation is taken over all the (possibly empty) subsets $\{a_s\}_{s=1}^n\subset\{\gamma_s\}_{s=1}^{\bar q}$ and $\{b_s\}_{s=1}^l\subset\{\gamma_s\}_{s=1}^{\bar q}\setminus \{a_s\}_{s=1}^n$, and we have set $\{c_s\}_{s=1}^{{\bar q}-n-l}\doteq\{\gamma_s\}_{s=1}^{\bar q}\setminus (\{a_s\}_{s=1}^n \cup \{b_s\}_{s=1}^l)$.
Similarly the second summation is taken over all the non-empty subsets $\{b_s\}_{s=1}^{{\bar q}-n}\subset\{\gamma_s\}_{s=1}^{\bar q}$ and we have set $\{a_s\}_{s=1}^{n}\doteq\{\gamma_s\}_{s=1}^{\bar q}\setminus \{b_s\}_{s=1}^{{\bar q}-n}$.
Fix $w\in \B_{4^{-{\bar q}}\beta}$. We need to prove that $\left|\partial^2_{ij}\partial^{{\bar q -1}}_{\gamma_1\cdots\gamma_{{\bar q-1}}}\hat h\right|(w) \leq C$ for some constant $C$ independent of $x$ and $w$. 
Define $\th\in C^\infty(\B_{\beta})$ by 
\[
\tilde h (v)\doteq\hh (v) - \sum_{j=0}^{\bar q} \frac{D^j \hh(w)(v,\dots,v)}{j!}.
\]
Note that $\th-\hh$ is a polynomial of degree ${\bar q}$, so that 
\[
\hg_\l^{ij}\p^2_{ij}\p^{\bar q}_{\gamma_1\cdots\gamma_{\bar q}}\th =\hg_\l^{ij}\p^2_{ij}\p^{\bar q}_{\gamma_1\cdots\gamma_{\bar q}}\hh = \hat f.
\]
Rescaling to $\B_{4^{-{\bar q}}\beta}(w)$ the elliptic Schauder estimates given for instance in \cite[Theorem 5.21]{HanLin}, we know that there exists $C>0$, depending on ${\bar q}$ (hence on $k$), but not on $w$, such that 
\begin{align}\label{schauder}
\abs{\p^2_{ij}\p^{\bar q}_{\gamma_1\cdots\gamma_{\bar q}}\hh(w)}
=&\abs{\p^2_{ij}\p^{\bar q}_{\gamma_1\cdots\gamma_{\bar q}}\th(w)}\\
 \leq& C\left\{\norm{\p^{\bar q}_{\gamma_1\cdots\gamma_{\bar q}}\th}_{L^\infty(\B_{4^{-{\bar q}}\beta}(w))} + \norm{\hat f}_{L^\infty(\B_{4^{-{\bar q}}\beta}(w))} + \qua{\hat f}_{C^{0,\alpha}(\B_{4^{-{\bar q}}\beta}(w))}\right\}.\nonumber
\end{align}
Note that $\B_{4^{-{\bar q}}\beta}(w)\subset \B_{4^{1-{\bar q}}\beta}(0)$. We will estimate the three terms at RHS of \eqref{schauder} separately.

\textbf{1st term.} \textit{Estimating $\norm{\p^{\bar q}_{\gamma_1\dots\gamma_{\bar q}}\th}_{L^\infty(\B_{4^{-{\bar q}}\beta}(w))}$.}

Let $v\in\B_{4^{-{\bar q}}\beta}(w)$. Note that
\[
\p^{\bar q}_{\gamma_1\dots\gamma_{\bar q}}\sum_{j=0}^{\bar q} \frac{D^j\hh(w)(v,\dots,v)}{j!} = 
\p^{\bar q}_{\gamma_1\dots\gamma_{\bar q}} \frac{D^{\bar q} \hh(w)(v,\dots,v)}{{\bar q}!} = D^{\bar q}\hh(w)(\partial_{\gamma_1},\dots,\partial_{\gamma_{\bar q}}).
\]
Hence
\begin{align*}
\abs{\p^{\bar q}_{\gamma_1\dots\gamma_{\bar q}}\th}(v) 
&= \abs{D^{\bar q}\th(v)(\partial_{\gamma_1},\dots,\partial_{\gamma_{\bar q}})}\\
&=\abs{D^{\bar q}\hh(v)(\partial_{\gamma_1},\dots,\partial_{\gamma_{\bar q}})-D^{\bar q}\hh(w)(\partial_{\gamma_1},\dots,\partial_{\gamma_{\bar q}})}\\
&\leq 2\beta\norm{D^{{\bar q}+1}\hh}_{L^\infty(\B_{4^{-{\bar q}}\beta}(w))}\\
& \leq C
\end{align*}
by the inductive hypothesis of the lemma.

\textbf{2nd term.} \textit{Estimating $\norm{\hat f}_{L^\infty(\B_{4^{-{\bar q}}\beta}(w))}$.}

Concerning the first addend in $\hat f$, since  $n\leq k-1$, by the property $(2'_{HC})$ of the harmonic chart we can compute
\begin{align}\label{Linfty2}
&\norm{\left(\partial^n_{a_1\cdots a_n}\hg_\l^{ij}\right)\left(\p^l_{b_1\cdots b_l}\partial_i\hh\right)\left(\p^{{\bar q}-n-l}_{c_1\cdots c_{{\bar q}-n-l}}\partial_j\hh\right)}_{L^\infty(\B_{4^{1-{\bar q}}\beta}(w))}\\
\leq& C \norm{\p^l_{b_1\cdots b_l}\partial_i\hh}_{L^\infty(\B_{4^{1-{\bar q}}\beta}(w))}
\norm{\p^{{\bar q}-n-l}_{c_1\cdots c_{{\bar q}-n-l}}\partial_j\hh}_{L^\infty(\B_{4^{1-{\bar q}}\beta}(w))}\nonumber\\
\leq& C, \nonumber
\end{align}
where we have used the inductive hypothesis and the fact that $l+1\leq {\bar q}+1$ and ${\bar q}-n-l+1 \leq {\bar q}+1$.

We now consider the second addend of $\hat{f}$ and estimate $\norm{C_\theta \lambda_1^{-2} \p^{\bar q}_{\gamma_1\cdots\gamma_{\bar q}}(\Theta(\hH))}_{L^\infty(\B_{4^{1-{\bar q}}\beta}(w))}$. If $\bar q =0$, then 
\[
\norm{C_\theta \lambda_1^{-2} \p^{\bar q}_{\gamma_1\cdots\gamma_{\bar q}}(\Theta(\hH))}_{L^\infty(\B_{4^{1-{\bar q}}\beta}(w))}=\norm{C_\theta \lambda_1^{-2} \Theta(\hH)}_{L^\infty(\B_{4^{1-{\bar q}}\beta}(w))}\leq C
\]
as a direct consequence of Lemma \ref{lem_A6}. If $\bar q \geq 1$, then $\p^{\bar q}_{\gamma_1\cdots\gamma_{\bar q}}(\Theta(\hH))$ can be developed as a linear combinations of terms of the form
\[
\Theta^{(j)}(\hH) \prod_{s=1}^j\p^{e_s-e_{s-1}}_{\gamma_{\sigma(e_{s-1}+1)}\cdots \gamma_{\sigma(e_s)}} \hH,
\]
where $e_0=0$, $j\in\{1,\dots,{\bar q}\}$, $\{e_s\}_{s=1}^j$ is an increasing subset of $\{1,\dots,{\bar q}\}$ with $e_j={\bar q}$, and $\sigma\in \Pi_{\bar q}$ is a permutation of $\{1,\dots,{\bar q}\}$.  By the inductive hypothesis of Theorem \ref{th_distancefunctionRic},  
\[\abs{\nabla_g^{e_s-e_{s-1}} H_{k-1}}_g(v) \leq C\lambda(r(x))^{e_s-e_{s-1}-1}.
\]
Lemma \ref{lem_conf} and assumption (A2) then imply that
\[
\abs{\p^{e_s-e_{s-1}}_{\gamma_{\sigma(e_{s-1}+1)}\cdots \gamma_{\sigma(e_s)}} \hH}(v) \leq C\lambda_1^{-1},
\]
and, using also Lemma \ref{lem_A6},  
\begin{align*}
\norm{ \p^{\bar q}_{\gamma_1\cdots\gamma_{\bar q}}(\hH^{2\eta})}_{L^\infty(\B_{4^{1-{\bar q}}\beta}(w))}
&\leq C\max_{j,\{e_s\}_{s=1}^j,\sigma} \norm{\Theta^{(j)}(\hH) \prod_{s=1}^j\p^{e_s-e_{s-1}}_{\gamma_{\sigma(e_{s-1}+1)}\cdots \gamma_{\sigma(e_s)}} \hH}_{L^\infty(\B_{4^{1-{\bar q}}\beta}(w))}\\
&\leq C\max_j \lambda(\hH)^{2+j}\prod_{s=1}^j \lambda_1^{-1}\\
&\leq C\max_j \lambda_1^{2+j-j}\\
&\leq C\lambda_1^{2} .
\end{align*}
Accordingly 
\begin{equation}\label{Linfty1}
\norm{C_\theta \lambda_1^{-2} \p^{\bar q}_{\gamma_1\cdots\gamma_{\bar q}}(\Theta(\hH))}_{L^\infty(\B_{4^{1-{\bar q}}\beta}(w))}\leq C
\end{equation}
Finally, concerning the third addend in $\hat f$, since $n<{\bar q}$ and using again harmonic radius estimates we get 
\begin{align}\label{Linfty3}
\norm{\left(\partial^n_{a_1\cdots a_n} \p^2_{ij} \hh\right)\left( \p^{{\bar q}-n}_{b_1\cdots b_{{\bar q}-n}}\ton{\hg_\l}_{ij}\right)
}_{L^\infty(\B_{4^{1-{\bar q}}\beta}(w))} 
&\leq C \norm{\partial^n_{a_1\cdots a_n} \p^2_{ij} \hh}_{L^\infty(\B_{4^{1-{\bar q}}\beta}(w))} \\
&\leq C.\nonumber
\end{align}
Combining \eqref{Linfty2}, \eqref{Linfty1} and \eqref{Linfty3} we thus get
\begin{equation}\label{eq_hf}
\norm{\hat f
}_{L^\infty(\B_{4^{1-{\bar q}}\beta}(w))} \leq C.
\end{equation}

\textbf{3rd term.} \textit{Estimating $\qua{\hat f}_{C^{0,\alpha}(\B_{4^{-{\bar q}}\beta}(w))}$.}

First, note that for any function $F\in C^1 (\B_{4^{-{\bar q}}\beta}(w))$ it holds clearly that
\begin{equation}\label{C1a}
\qua{F}_{C^{0,\alpha}(\B_{4^{-{\bar q}}\beta}(w))}\leq (2^{1-2{\bar q}}\beta)^{1-\alpha}\norm{F}_{C^1(\B_{4^{-{\bar q}}\beta}(w))}.
\end{equation} 
Reasoning as for \eqref{Linfty1}, we get that, for any $s\in\{1,\dots,m\}$,
\[
\norm{C_\theta\l_1^{-2}\partial_s\partial^{\bar q}_{\gamma_1\cdots\gamma_{\bar q}}(\Theta(\hH))}_{L^\infty(\B_{4^{-{\bar q}}\beta}(w))}\leq C,
\]
from which, together with \eqref{C1a},
\begin{equation}\label{Ca1}
\qua{C_\theta\l_1^{-2} \p^{\bar q}_{\gamma_1\cdots\gamma_{\bar q}}((\Theta(\hH)))}_{C^{0,\alpha}(\B_{4^{-{\bar q}}\beta}(w))}\leq C.
\end{equation}
Moreover, as long as either $n\in\{1,\dots,{\bar q}\}$ or $n=0$ and $l\in\{1,\dots,{\bar q}-1\}$ we have that
\begin{align*}
&\norm{\p_s\qua{\left(\partial^n_{a_1\cdots a_n}\hg_\l^{ij}\right)\left(\p^l_{b_1\cdots b_l}\partial_i\hh\right)\left(\p^{{\bar q}-n-l}_{c_1\cdots c_{{\bar q}-n-l}}\partial_j\hh\right)}}_{L^\infty(\B_{4^{-{\bar q}}\beta}(w))}\\
\leq&\norm{\left(\partial^{n+1}_{sa_1\cdots a_n}\hg_\l^{ij}\right)\left(\p^l_{b_1\cdots b_l}\partial_i\hh\right)\left(\p^{{\bar q}-n-l}_{c_1\cdots c_{{\bar q}-n-l}}\partial_j\hh\right)}_{L^\infty(\B_{4^{-{\bar q}}\beta}(w))}\\
&+\norm{\left(\partial^n_{a_1\cdots a_n}\hg_\l^{ij}\right)\left(\p^{l+1}_{sb_1\cdots b_l}\partial_i\hh\right)\left(\p^{{\bar q}-n-l}_{c_1\cdots c_{{\bar q}-n-l}}\partial_j\hh\right)}_{L^\infty(\B_{4^{-{\bar q}}\beta}(w))}\\
&+\norm{\left(\partial^n_{a_1\cdots a_n}\hg_\l^{ij}\right)\left(\p^l_{b_1\cdots b_l}\partial_i\hh\right)\left(\p^{{\bar q}-n-l+1}_{sc_1\cdots c_{{\bar q}-n-l}}\partial_j\hh\right)}_{L^\infty(\B_{4^{-{\bar q}}\beta}(w))}\\
\leq & C
\end{align*}
for any $s\in\{1,\dots,m\}$, where each addend has been estimated reasoning as for \eqref{Linfty2}. Note that we used here the condition $n+1\leq {\bar q}+1\leq k-1$ which permits to have the needed control in the harmonic chart. Combining with  \eqref{C1a}, we get 
\begin{equation}\label{Ca2}
\qua{\left(\partial^n_{a_1\cdots a_n}\hg_\l^{ij}\right)\left(\p^l_{b_1\cdots b_l}\partial_i\hh\right)\left(\p^{{\bar q}-n-l}_{c_1\cdots c_{{\bar q}-n-l}}\partial_j\hh\right)}_{C^{0,\alpha}(\B_{4^{-{\bar q}}\beta}(w))}
\leq  C,
\end{equation}
Similarly, for any $s\in\{1,\dots,m\}$ and any $n\in\{1,\dots,{\bar q}-2\}$, it holds
\begin{align*}
&\norm{\p_s\qua{\left(\partial^n_{a_1\cdots a_n} \p^2_{ij} \hh\right)\left( \p^{{\bar q}-n}_{b_1\cdots b_{{\bar q}-n}}\hg_\l^{ij}\right)
}}_{L^\infty(\B_{4^{-{\bar q}}\beta}(w))}\\
\leq &\norm{\left(\partial^{n+1 }_{sa_1\cdots a_n} \p^2_{ij} \hh\right)\left( \p^{{\bar q}-n}_{b_1\cdots b_{{\bar q}-n}}\hg_\l^{ij}\right)
}_{L^\infty(\B_{4^{-{\bar q}}\beta}(w))}
+ \norm{\left(\partial^{n}_{a_1\cdots a_n} \p^2_{ij} \hh\right)\left( \p^{{\bar q}-n+1}_{sb_1\cdots b_{{\bar q}-n}}\hg_\l^{ij}\right)
}_{L^\infty(\B_{4^{-{\bar q}}\beta}(w))}\\
\leq & C,
\end{align*}
from which
\begin{equation}\label{Ca3}
\qua{\left(\partial^n_{a_1\cdots a_n} \p^2_{ij} \hh\right)\left( \p^{{\bar q}-n}_{b_1\cdots b_{{\bar q}-n}}\hg_\l^{ij}\right)}_{C^{0,\alpha}(\B_{4^{-{\bar q}}\beta}(w))}
\leq  C.
\end{equation}
Hence, in order to control $\qua{\hat f}_{C^{0,\alpha}(\B_{4^{-{\bar q}}\beta}(w))}$, it remains to estimate the LHS of \eqref{Ca2} for $l={\bar q}$ and $n=0$, and the LHS of \eqref{Ca3} for $n={\bar q}-1$. Namely, we have to bound
\begin{equation*}
\qua{\hg_\l^{ij}\left(\p^{\bar q}_{b_1\cdots b_{\bar q}}\partial_i\hh\right)}_{C^{0,\alpha}(\B_{4^{-{\bar q}}\beta}(w))}+\qua{\left(\partial^{{\bar q}-1}_{a_1\cdots a_{{\bar q}-1}} \p^2_{ij} \hh\right)\left( \p^{1}_{b_1}\hg_\l^{ij}\right)}_{C^{0,\alpha}(\B_{4^{-{\bar q}}\beta}(w))}.
\end{equation*}
By the properties of the harmonic chart, it is enough to obtain an upper bound for terms of the form
\begin{equation*}
\qua{\p^{{\bar q}+1}_{b_1\cdots b_{{\bar q}+1}}\hh}_{C^{0,\alpha}(\B_{4^{-{\bar q}}\beta}(w))},
\end{equation*}
for some indexes $b_1,\dots,b_{{\bar q}+1}\in\{1,\dots,m\}$. Letting $p=\frac{m}{1-\alpha}$, by the Euclidean Sobolev embeddings (see e.g \cite[p. 109]{Adams}), we have 
\begin{equation}
\label{Sobol}
[\p^{{\bar q}+1}_{b_1\cdots b_{{\bar q}+1}}\hh]_{C^{0,\alpha}(\B_{4^{-{\bar q}}\beta}(w))}\leq K_3\left\{\|\p^{{\bar q}+1}_{b_1\cdots b_{{\bar q}+1}}\hh\|^p_{L^p(\B_{4^{-{\bar q}}\beta}(w))}+\sum_{s=1}^m\|\partial_s \p^{{\bar q}+1}_{b_1\cdots b_{{\bar q}+1}}\hh\|^p_{L^p(\B_{4^{-{\bar q}}\beta}(w))}\right\}^{1/p},
\end{equation}
with $K_3$ a positive constant depending only on $m$, $\alpha$, $\beta$ and $k$ (via ${\bar q}$).
Concerning the first term at the RHS of \eqref{Sobol}, by the inductive hypothesis we have 
\begin{align}\label{Sob_pot}
\|\p^{{\bar q}+1}_{b_1\cdots b_{{\bar q}+1}}\hh\|^p_{L^p(\B_{4^{-{\bar q}}\beta}(w))} \leq \|\p^{{\bar q}+1}_{b_1\cdots b_{{\bar q}+1}}\hh\|^p_{L^\infty(\B_{4^{-{\bar q}}\beta}(w))} \beta^m\kappa_m\leq C,
\end{align}
where $\kappa_m$ is the volume of the $m$-dimensional unit ball $\B_1\subset \R^m$.
Concerning the second term at the RHS of \eqref{Sobol}, we note that 
$|\partial_s \p^{{\bar q}+1}_{b_1\cdots b_{{\bar q}+1}}\hh|\leq |D^2 \p^{{\bar q}}_{b_1\cdots b_{{\bar q}}}\hh|$, where, given a $C^2$ function $F$ on $\B_\beta$, $D^2F$ denotes the Euclidean Hessian matrix of $F$ and $|D^2F|^2\dot =\sum_{i,j=1}^m (\p_i\p_jF)^2$. According to the Euclidean Calder\'on-Zygmund inequality, \cite[Theorem 9.11]{GT}, we thus have
\begin{align}\label{CZ}
\|\partial_s \p^{{\bar q}+1}_{b_1\cdots b_{{\bar q}+1}}\hh\|_{L^p(\B_{4^{-{\bar q}}\beta}(w))}
&\leq \|D^2 \p^{{\bar q}}_{b_1\cdots b_{{\bar q}}}\hh\|_{L^p(\B_{4^{-{\bar q}}\beta}(w))} \\
&\leq C_1\left(\|\Delta_0\p^{{\bar q}}_{b_1\cdots b_{{\bar q}}}\hh\|_{L^p(\B_{4^{1-{\bar q}}\beta}(w))}+\|\p^{{\bar q}}_{b_1\cdots b_{{\bar q}}}\hh\|_{L^p(\B_{4^{1-{\bar q}}\beta})}\right),\nonumber
\end{align}
where $\Delta_0=\sum_i \partial_i\partial_i$ is the Euclidean Laplacian and the positive constant $C_1$ depends on $m$, $\alpha$ (via $p$), $\beta$ and $k$ (via ${\bar q}$). We have that \begin{equation}\label{eq_poten}
\|\p^{{\bar q}}_{b_1\cdots b_{{\bar q}}}\hh\|_{L^p(\B_{4^{1-{\bar q}}\beta})}\leq C
\end{equation}
by the inductive hypothesis. Concerning the other addend in \eqref{CZ}, using the properties of the harmonic chart, we get 
\begin{align*}
|\Delta_0\p^{{\bar q}}_{b_1\cdots b_{{\bar q}}}\hh| = |\sum_{s=1}^m\p^2_{ss}\p^{{\bar q}}_{b_1\cdots b_{{\bar q}}}\hh|  \leq C |\hg_\lambda^{ij}\p^2_{ij}\p^{{\bar q}}_{b_1\cdots  b_{{\bar q}}}\hh|=C|\hat f_b|,
\end{align*}
where 
$\hat f_b$ is defined as $\hat f$ in \eqref{eq_f}, up to replace the indexes $\gamma_1,\dots,\gamma_{\bar q}$ in the definition of $\hat f$ with $b_1,\dots,b_{\bar q}$. Accordingly, computing as for \eqref{eq_hf}, we get 
\begin{align*}
\|\Delta_0\p^{{\bar q}}_{b_1\cdots b_{{\bar q}}}\hh\|_{L^p(\B_{4^{1-{\bar q}}\beta}(w))}
&\leq C\|\hat f_b\|_{L^p(\B_{4^{1-{\bar q}}\beta}(w))}\\
&\leq C\|\hat f_b\|_{L^\infty(\B_{4^{1-{\bar q}}\beta}(w))}\\
&\leq C.
\end{align*}
Inserting this latter and \eqref{eq_poten} in \eqref{CZ}, and combining with \eqref{Sobol} and \eqref{Sob_pot}, we obtain 
\begin{align*}
\qua{\p^{{\bar q}+1}_{b_1\cdots b_{{\bar q}+1}}\hh}_{C^{0,\alpha}(\B_{4^{-{\bar q}}\beta}(w))}\leq C.
\end{align*}
Together with \eqref{Ca1}, \eqref{Ca2} and \eqref{Ca3}, this gives 
\[
\qua{\hat f}_{C^{0,\alpha}(\B_{4^{-{\bar q}}\beta}(w))}\leq C.
\]
Hence, all the three addends in the RHS of \eqref{schauder} are upper bounded by a positive constant $C$, so that we have shown the validity of \eqref{2_lem_main} with $q={\bar q}+1$. This concludes the proof of Lemma \ref{lem_main}.
\end{proof}

We can now come back to the proof of Theorem \ref{th_distancefunctionRic}. 
Note that by assumption (A1) $\theta(t)\dot=t\lambda(t)$ is smooth, increasing on $[R_1,+\infty)$ and $\lim_{t\to +\infty}\theta(t)=+\infty$, hence $\theta^{-1}:[\theta(R_1),+\infty)\to [R_1,+\infty)$ is well-defined, smooth and increasing. Define $H=H_k\in C^\infty(M\setminus B_{R_H}(o))$ by $H=\theta^{-1}\circ h$, where $R_H$ is chosen large enough so that $h\geq \theta(R_1)$ on $M\setminus B_{R_H}(o)$. First, since $\theta^{-1}$ and $\lambda$ are increasing, from $C_h^{-1}\theta(r)\leq h \leq C_h \theta(r)$ we deduce that 
\[
H(x)\geq \theta^{-1}(C_h^{-1}\theta(r(x)))\geq \theta^{-1}(C_h^{-1}r(x)\lambda(C_h^{-1}r(x)))=C_h^{-1}r(x),\]
and similarly $H(x)\leq C_hr(x)$. To finish the proof of Theorem \ref{th_distancefunction} we are going to prove by induction on $j$ that, for $1\leq j \leq k$, we have that
\[
\left|\nabla_g^{j} H\right|(x)\leq C^{j-1}\max\{\lambda(r(x))^{j-1},1\}.
\]

We first consider the case $j=1$. Then
$
|\nabla_g h| (x) = \theta'(H(x))|\nabla_g H| (x)
$, so that, using also Lemma \ref{lem_A7},
\begin{align*}
|\nabla_g H| (x) &= \frac{|\nabla_g h| (x)}{\theta'(H(x))}\\
&\leq \frac{C_h \lambda(r(x))}{\theta'(H(x))}\\
&\leq C.
\end{align*}

Similarly, suppose that for some $1\leq \bar j \leq k$ and all $1\leq j \leq \bar j-1$, $\left|\nabla_g^{j} H\right|(x)\leq C^{j-1}\max\{\lambda(r(x))^{j-1},1\}$.
We have that $\nabla_g^{\bar j}h(x)- \nabla_g^{\bar j}H(x)\theta'(H)$ can be written as a linear combination of terms of the form $\theta^{(s)}(H)\otimes_{i=1}^s\nabla_g^{e_i}H$, with $2\leq s \leq \bar j$ and $\sum_{i=1}^se_i={\bar j}$.
By inductive hypothesis, using also Lemma \ref{lem_A8}, we have that
\begin{align*}
|\theta^{(s)}(H)\bigotimes_{i=1}^s\nabla_g^{e_i}H|&\leq |\theta^{(s)}(H)|\prod_{i=1}^s|\nabla_g^{e_i}H|\\
&\leq |\theta^{(s)}(H)|\prod_{i=1}^s \lambda (r(x))^{e_i-1}\\
&\leq C \lambda(H(x))^s\lambda(r(x))^{\bar j -s}\\
&\leq C\lambda(r(x))^{\bar j}.
\end{align*}
In particular, by Lemmas \ref{lem_main} and \ref{lem_conf},
\begin{align*}
|\nabla_g^{\bar j} H(x) | &\leq C\frac{|\nabla_g^{\bar j} h (x)| + \sum_{s=2}^{\bar j}|\theta^{(s)}(H)\bigotimes_{i=1}^s\nabla_g^{e_i}H|}{\theta'(H(x))}\\
&\leq C\frac{\lambda_1^{\bar j}}{\lambda(r(x))}\\
&\leq C \lambda(r(x))^{\bar j-1}.
\end{align*}
We have thus proved that there exists a function $H\in C^\infty (M\setminus R_H)$ such that for any $x$ outside a compact set $K$ of $M$ 
\begin{itemize}
\item[(i)]$C_h^{-1}r(x)\leq H(x)\leq C_hr(x)$;
\item[(ii)] for $1\leq j \leq k$, $\left|\nabla_g^{j} H\right|(x)\leq C \lambda(r(x))^{j-1}$.
\end{itemize}
Replacing $H$ with $H/C_h$, and choosing a suitable smooth continuation of $H|_{M\setminus K}$ inside the compact $K$, we get that, up to possibly increase the value of $C$, it holds
\begin{itemize}
\item[(i)]$C^{-2}r(x)\leq H(x)\leq \max\left\{r(x), 1\right\}$;
\item[(ii)] for $1\leq j \leq k$, $\left|\nabla_g^{j} H\right|(x)\leq C^{j-1}\max\{\lambda(r(x))^{j-1},1\}$;
\end{itemize}
as desired. 
\subsection{Proof of Theorem \ref{th_distancefunction}: case (b)}
In this section, we are going to prove Theorem \ref{th_distancefunction} in the set of assumptions (b), i.e.

\begin{theorem}\label{th_distancefunctionSect}
Let $(M^m, g)$ be a complete Riemannian manifold and $o\in M$ a fixed reference point, $r(x)\doteq \mathrm{dist}_{g}(x,o)$.  Let $k\in \mathbb N^{+}$.  
If $k\geq 2$, suppose in addition that for some $D>0$,
\[
\ |\nabla_g^j\mathrm{Ric}_{g}|(x)\leq \lambda(r(x))^{2+j},\ 0\leq j \leq k-2,\quad|\mathrm{Sect}_{g}|(x)\leq D\lambda(r(x))^{2} \quad\mathrm{on}\,\,M,
\]
where the function $\lambda$ satisfies assumptions (A1), (A2), (A3), (A4($j$)) for $j=1,\dots,k$.
Then there exists an exhaustion function $H=H_k\in C^{\infty}(M)$ such that for some positive constant $C_H>1$ independent of $x$, we have on $M$ that 
\begin{itemize}
\item[(i)]$C_H^{-2}r(x)\leq H(x)\leq \max\left\{r(x), 1\right\}$;
\item[(ii)] for $1\leq j \leq k$, $\left|\nabla_g^{j} H\right|(x)\leq C_H^{j-1}\max\{\lambda(r(x))^{j-1},1\}$.
\end{itemize}
\end{theorem}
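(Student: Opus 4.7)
The proof follows closely the structure of the proof of case (a) (Theorem \ref{th_distancefunctionRic}), and I would proceed by induction on $k$, the case $k=1$ being Gaffney's theorem. The \emph{global} part of the construction is essentially unchanged: since $|\mathrm{Sect}_g|(x)\leq D\lambda(r(x))^2$ implies $\mathrm{Ric}_g(x)\geq -(m-1)D\lambda(r(x))^2$, Theorem \ref{refinedBS} applies directly (with $\lambda$ replaced by the constant multiple $\sqrt{(m-1)D}\,\lambda$, which satisfies (A1)--(A3) with the same constants up to rescaling), and produces a function $h$ satisfying (i), (ii), (iii) of Proposition \ref{BS}, namely $|\nabla_g h|\leq C_h\lambda(r(x))$ and $\Delta_g h=|\nabla_gh|^2-C_\theta(\theta'(H_{k-1}(x)))^2$ outside a compact set. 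As in case (a), I would then set $H\doteq\theta^{-1}\circ h$ and it remains only to control the higher covariant derivatives of $h$.

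The unique point where case (a) genuinely used the injectivity radius hypothesis was in Proposition \ref{HarmRadEst}, which provided a harmonic chart of uniform size around each rescaled base point, with $C^{k-1,\alpha}$ control on the metric. To replace this step under the sectional curvature assumption alone, for each fixed $x\in M$ with $r(x)$ large I would pass to the tangent space. Setting $K_x\doteq D\lambda(r(x)+1)^2$, Rauch's comparison theorem yields that the conjugate radius at $x$ is at least $\pi/\sqrt{K_x}$, so $\exp_x$ is a local diffeomorphism on the Euclidean ball $\B_{\pi/(2\sqrt{K_x})}\subset T_xM$. The pullback metric $\tilde g_x\doteq\exp_x^*g$ on this ball is a smooth Riemannian metric with (i) the same sectional curvature bound, (ii) the same bounds $|\nabla^j\mathrm{Ric}_{\tilde g_x}|\leq \lambda^{2+j}$ for $1\leq j\leq k-2$ (pulled back pointwise), and (iii) injectivity radius at $0$ at least $\pi/(2\sqrt{K_x})\sim 1/\lambda(r(x))$. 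Moreover, since $\exp_x$ is a local isometry, the covariant derivatives of $h$ at $x$ coincide with the covariant derivatives of $\tilde h_x\doteq h\circ\exp_x$ at $0$ in the metric $\tilde g_x$.

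Thus, on the pullback $(\B_{\pi/(2\sqrt{K_x})},\tilde g_x)$ all the assumptions of case (a) are in force \emph{locally around the origin}, with the natural scale $1/\lambda(r(x))$. I would then apply the rescaling $g_\lambda=\lambda_1^2\tilde g_x$ and the harmonic chart construction of Proposition \ref{HarmRadEst} in this rescaled pullback, and run verbatim the elliptic regularity argument of Lemma \ref{lem_main} (Schauder estimates on the polynomial-subtracted function, combined with the Sobolev embedding and Euclidean Calder\'on--Zygmund inequalities for the top-order term). The resulting pointwise bounds on $|\p^q\tilde h_x|(0)$ translate, via Lemma \ref{lem_conf}, into the required control on $|\nabla_g^q h|(x)$. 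The inductive step from $|\nabla_g^j h|$ to $|\nabla_g^j H|$ is then identical to the one carried out at the end of Subsection \ref{casea}, using only Lemmas \ref{lem_A7}--\ref{lem_A6} and the inductive estimates on $H_{k-1}$.

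The main obstacle in this strategy is a subtle one: Proposition \ref{HarmRadEst} requires a lower bound on the injectivity radius on an entire $\delta$-neighborhood, not just at a single point. In the pullback $(\B_{\pi/(2\sqrt{K_x})},\tilde g_x)$, the inequality $\mathrm{inj}_{\tilde g_x}(0)\geq\pi/(2\sqrt{K_x})$ comes for free from Rauch, but a uniform lower bound $\mathrm{inj}_{\tilde g_x}(y)\gtrsim 1/\lambda(r(x))$ for $y$ in a fixed sub-ball requires extra work. The natural route is to invoke the volume lower bound provided by G\"unther's inequality (valid at $0$ thanks to the already established injectivity bound there) together with Bishop--Gromov to transfer it to nearby points, and then feed the resulting volume lower bound into the Cheeger--Gromov--Taylor injectivity radius estimate applied to the pullback. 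This is precisely the place where the sectional curvature hypothesis (stronger than the Ricci hypothesis of case (a)) becomes essential and cannot be weakened, and handling it rigorously is the real content of the proof.
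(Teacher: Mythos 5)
Your proposal is correct and follows the same overall architecture as the paper's proof: induction on $k$, Theorem \ref{refinedBS} applied under the Ricci lower bound implied by the sectional bound, passage to a local isometric cover on which the injectivity radius is controlled so that the case-(a) machinery (rescaling, Proposition \ref{HarmRadEst}, Lemma \ref{lem_main}) applies verbatim to $h\circ F$, and finally $H=\theta^{-1}\circ h$. The one point where you diverge is exactly the point you flag as the "real content": how to get the injectivity radius lower bound at \emph{all} points of a fixed sub-ball of the cover, not just at the center. You build the cover by hand as $(\B_{\pi/(2\sqrt{K_x})},\exp_x^*g)$ and propose to obtain the neighborhood bound via G\"unther's volume estimate at the origin, Bishop--Gromov to propagate the volume lower bound, and Cheeger--Gromov--Taylor to convert it back into an injectivity radius bound; this is viable (with some care about relative volume comparison on the incomplete pullback ball) but heavier than necessary. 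The paper instead invokes the localized Cartan--Hadamard theorem of G\"uneysu--Pigola (\cite[Lemma 2.7]{GuneysuPigola_17}), which packages the cover as a ball $B_R^{\bar g}(\bar x)$ in a complete manifold $(\bar M,\bar g)$ together with the estimate $\mathrm{inj}_{\bar g}(\bar y)\geq d_{\bar g}(\bar y,\partial B_R^{\bar g}(\bar x))\geq R/2$ for every $\bar y\in B_{R/2}^{\bar g}(\bar x)$ -- i.e., the all-points bound you are missing is already part of the cited lemma (it is a Klingenberg-type consequence of the upper sectional bound and the absence of conjugate points, not a volume argument). So your route would work but replaces a one-line citation with a genuine volume-comparison detour; if you carry it out, make sure the balls used in Bishop--Gromov and in Cheeger--Gromov--Taylor stay compactly inside the pullback ball, and note that after the rescaling by $\lambda_1^2$ all the resulting constants are scale-invariant, which is what makes the final bound uniform in $x$.
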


We proceed as in \cite[Subsection 4.2]{IRV-HessCutOff}. Note that the first part of the proof of Theorem \ref{th_distancefunctionRic} does not require the control on the injectivity radius. In particular, we can suppose by induction that there exists a distance-like function $H_{k-1}\in C^{\infty}(M)$ such that for some positive constant $C_{k-1}>1$ independent of $x$ and $o$, we have on $M$ that 
\begin{itemize}
\item[(i)]$C_{k-1}^{-2}r(x)\leq H_{k-1}(x)\leq \max\left\{r(x), 1\right\}$;
\item[(ii)] for $1\leq j \leq k-1$, $\left|\nabla_g^{j} H_{k-1}\right|_g(x)\leq C_{k-1}^{j-1}\max\{\lambda(r(x))^{j-1},1\}$.
\end{itemize}
By Theorem \ref{refinedBS} we get also in the present assumptions the validity of Proposition \ref{BS}. In particular there exists $h=h_k\in C^{\infty}(M)$ such that 
\begin{itemize}
\item[(i)] $C_h^{-1}r(x)\lambda(r(x))\leq h(x) \leq C_h\max\{1;r(x)\lambda(r(x))\}$;
\item[(ii)] $|\nabla_g h|_g\leq C_h\lambda(r(x))$ on $M$;
\item[(iii)] $\Delta_g h = |\nabla_g h|^2_g-C_\theta (\theta'(H_{k-1}(x)))^2$ on $M\setminus B_{R_\theta}(o)$ for some constants $C_\theta>0$ and $R_\theta >0$.
\end{itemize}
Fix $R_{0}\in\mathbb{R}^{+}$ such that $\lambda(2R_0+1)>\frac{\pi}{2R_0}$. This is always possible since $\lambda$ is strictly positive and non-decreasing. If $x\in M$ satisfies $r(x)>1+R_0$, then on $B_{R_0}(x)$
\[
\ |\mathrm{Sect}_{g}|\leq D^2\lambda^2(R_0+r(x))\doteq D^2K^2_{x}.
\]
By a localized version of the Cartan-Hadamard theorem (see e.g. \cite[Lemma 2.7]{GuneysuPigola_17}) we have that, for every $0<R<\min\left\{\frac\pi{DK_{x}},R_0\right\}$, there exists  a smooth complete Riemannian manifold $(\bar{M},\bar{g})$, $\bar{x}\in \bar{M}$ and a smooth surjective local isometry
\[
\ F\doteq F_{g,x,R}:B_{R}^{\bar{g}}(\bar{x})\to B_{R}^{g}(x)
\]
such that
\begin{itemize}
\item $F(\bar{x})=x$;
\item $\mathrm{inj}_{\bar{g}}(\bar{x})\geq R$;
\item $|\mathrm{Sect}_{\bar{g}}|\leq D^2K^2_{x}$ on $B_{R}^{\bar{g}}(\bar{x})$;
\item $F(B_{r}^{\bar{g}}(\bar{x}))=B_{r}^{g}(x)$, for all $0<r<R$.
\end{itemize}
In particular, for every $\bar{y}\in B^{\bar{g}}_{R/2}(\bar{x})$ ,we have that
\begin{equation}\label{Sect1}
|\mathrm{Sect}_{\bar{g}}|(\bar{y})\leq D^2K^2_{x},\quad|\nabla_{\bar{g}}^j \mathrm{Ric}_{\bar{g}}|(\bar{y})\leq K^{2+j}_{x},\quad
\mathrm{inj}_{\bar{g}}(\bar{y})\geq d_{\bar{g}}\left(\bar{y},\partial B_{R}^{\bar{g}}(\bar{x})\right)\geq \frac{R}{2}.
\end{equation}
Letting  $\lambda_{R_0}^2\doteq (m-1)K^2_{x}$, we set
\[
\ \bar{g}_{\lambda}=\lambda_{R_0}^2\bar{g}.
\]
Then, by \eqref{Sect1},
\begin{equation*}
|\mathrm{Ric}_{\bar{g}_{\lambda}}|(\bar{y})\leq D^2,\quad|\nabla_{\bar{g}_{\lambda}}^j \mathrm{Ric}_{\bar{g}_{\lambda}}|(\bar{y})\leq 1,\quad
\mathrm{inj}_{\bar{g}_{\lambda}}(\bar{y})\geq \lambda_{R_0}\frac{R}{2}.
\end{equation*}
We can choose $R=\frac{\pi}{2DK_{x}}<R_0$, obtaining that
\[
\ \mathrm{inj}_{\bar{g}_{\lambda}}(\bar{y})\geq \frac{\sqrt{m-1}\pi}{4D}\doteq i_{0}.
\]
Proposition \ref{HarmRadEst} hence yields that there exists a constant $C_{HR}=C_{HR}(m,Q,i_0,k)$ independent of $x$ such that on $B_{C_{HR}}^{\bar g_\lambda}(x)\subset\bar M$ there exists a centered harmonic chart $\varphi_H=(y^1,\dots,y^m):B_{C_{HR}}^{\bar g_\lambda}(x)\to U\subset \mathbb R^m$ such that $\varphi_{H}(x)=0$ and, setting $\hat g_\l=\bar g_\lambda \circ  \varphi_H^{-1}$, it holds

\begin{enumerate}
\item[$(1_{HC})$] $Q^{-1}\delta_{ij}\leq \ton{\hat g_\l}_{ij}\leq Q\delta_{ij}$ as bilinear forms;
\item[$(2_{HC})$] $\sum_{1\leq|\gamma|\leq k-1}C_{HR}^{|\gamma|}\sup_y\left|\partial_{\gamma}\ton{\hat g_\l}_{ij}(y)\right|\leq Q-1$,
\item[$(1'_{HC})$] $Q^{-1}\delta^{ij}\leq \hat g_\l^{ij}\leq Q\delta^{ij}$;
\item[$(2'_{HC})$] $\sum_{1\leq|\gamma|\leq k-1}\sup_y\left|\partial_{\gamma}\hg_\l^{ij}(y)\right|\leq C(Q)$,
\end{enumerate}
for some constant $C(Q)$, depending only on $Q$.

For a fixed $x$, we can define $\bar{h}:B_{R}^{\bar{g}}(\bar{x})\to\mathbb{R}$ by $\bar{h}=h\circ F$ and $\hat{h}:\B_{\beta}\to\mathbb{R}$ by $\hat h =\bar{h}\circ \varphi_H^{-1}$.

Since $F$ is a local isometry, at this stage, we can estimate the covariant derivatives of $\hat h$ exactly as we did in the proof of Theorem \ref{th_distancefunctionRic}. In order to get our conclusion, we deduce from these estimates a control on the covariant derivatives of $\bar h$, hence of $h$, and finally of $H=\theta^{-1}\circ h$ outside a compact set of $M$. 

\section{$k$-th order cut-offs and application to the density problem}\label{SectCut&Dens}
In the following corollary we notice that higher order exhaustion functions, as the ones which we obtained in the proof of Theorem \ref{th_distancefunction}, permit to construct $k$-th order cut-off functions. These, in turn, will allow us to conclude the proof of the density result Theorem \ref{th_main1}.

\begin{corollary}\label{kCutOff}
Let $(M, g)$ be a complete Riemannian manifold and $o\in M$ a fixed reference point, $r(x)\doteq \mathrm{dist}_{g}(x,o)$. Let $k\geq 2$ be an integer.  Let $\lambda$ satisfy assumption (A1), (A2), (A3) and (A4($j$)), $j=1,\dots,k$, and suppose that  $\lambda^{1-k}\not\in L^1([1,+\infty))$.
Suppose that one of the following curvature assumptions holds
\begin{itemize}
\item[(a)] for some $i_0>0$,
\[
\ |\nabla_g^j\mathrm{Ric}_{g}|(x)\leq \lambda(r(x))^{2+j},\ 0\leq j \leq k-2,\quad\mathrm{inj}_{g}(x)\geq \frac{i_0}{\lambda(r(x))}>0\quad\mathrm{on}\,\,M.
\]
\item[(b)] for some $D>0$,
\[
\ |\nabla_g^j\mathrm{Ric}_{g}|(x)\leq \lambda(r(x))^{2+j},\ 1\leq j \leq k-2,\ |\mathrm{Sect}_{g}|(x)\leq D^2\lambda(r(x))^2\quad\mathrm{on}\,\,M.
\]
\end{itemize}

Then there exist a family of cut-off functions $\left\{\chi_{R}\right\}_{R>3}\subset C^\infty_c(M)$ and a constant $C_\chi>0$ independent of $R$ such that
\begin{enumerate}
\item $\chi_{R}=1$ on $B_{C_H^{-1}(R-2)}(o)$, with $C_H$ the constant appearing in Theorem \ref{th_distancefunction};
\item $|\nabla_{g}^{j}\chi_{R}|\leq C_\chi$ for $j=1,\ldots,k$.
\end{enumerate}
\end{corollary}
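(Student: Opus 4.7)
The plan is to take $\chi_R = \phi_R \circ H$, where $H$ is the exhaustion provided by Theorem~\ref{th_distancefunction} and $\phi_R \in C^\infty_c([0,+\infty))$ is a carefully tuned one-variable cut-off whose derivatives decay quickly enough to absorb the possible growth $|\nabla_g^{a} H|\lesssim \lambda(r)^{a-1}$. Fa\`a di Bruno applied to $\phi_R\circ H$ gives
\[
|\nabla_g^j\chi_R|(x) \leq C\sum_{l=1}^{j}|\phi_R^{(l)}(H(x))|\,\max\{\lambda(r(x))^{j-l},1\},
\]
so the task reduces to producing $\phi_R$ with $|\phi_R^{(l)}(t)| \lesssim \lambda(t)^{l-k}$ on the transition set $\{\phi_R'\neq 0\}$, uniformly in $R$, for each $1\leq l\leq k$.

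The integrability hypothesis enters through the following construction. Fix once and for all a template $\psi\in C^\infty(\R;[0,1])$ with $\psi\equiv 1$ on $(-\infty,0]$ and $\psi\equiv 0$ on $[1,+\infty)$, and set $u(t)\doteq \int_1^t\lambda(s)^{1-k}\,ds$. Since $\lambda^{1-k}\notin L^{1}([1,+\infty))$, $u$ is smooth, strictly increasing for $t\geq R_1$, and satisfies $u(t)\to+\infty$. Define
\[
\phi_R(t) \doteq \psi\bigl(u(t)-u(R)\bigr).
\]
Then $\phi_R\equiv 1$ on $[0,R]$ and $\phi_R\equiv 0$ on $[t_R,+\infty)$, for the unique $t_R>R$ with $u(t_R)-u(R)=1$; in particular $\chi_R\in C^\infty_c(M)$. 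Using Theorem~\ref{th_distancefunction}~(i), for $R>3$ one has $\{H\leq R\}\supseteq B_R(o)\supseteq B_{C_H^{-1}(R-2)}(o)$, which yields (1).

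The heart of the matter is controlling $\phi_R^{(l)}$. Exploiting (A3) (which gives $|\lambda'|\lesssim\lambda/t$) and (A4$(j)$), $1\leq j\leq k$ (which give $|\lambda^{(j)}|\lesssim\lambda^j/t$), I would prove by induction, via a Fa\`a di Bruno expansion of $\lambda^{1-k}=v\circ\lambda$ with $v(s)=s^{1-k}$, the derivative estimate
\[
|u^{(a)}(t)| \leq C_a \max_{0\leq p\leq a-1} \lambda(t)^{a-k-p}\, t^{-p}, \qquad a\geq 1,\ t \text{ large}.
\]
A second application of Fa\`a di Bruno, this time to $\phi_R=\psi\circ(u-u(R))$, produces $|\phi_R^{(l)}(t)|\leq C_l\sum \lambda(t)^{l-qk-P}\,t^{-P}$, where the sum runs over $1\leq q\leq l$ and $0\leq P\leq l-q$. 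Plugging back into the Fa\`a di Bruno expansion of $\nabla_g^j\chi_R$, and using that (A2) combined with Theorem~\ref{th_distancefunction}~(i) forces $\lambda(H(x))\asymp\lambda(r(x))$ on $\mathrm{supp}\,\phi_R'(H)$, each contribution to $|\nabla_g^j\chi_R|(x)$ is bounded by $\lambda(H(x))^{j-qk-P}\,H(x)^{-P}$. A direct check shows that for $1\leq j\leq k$, $1\leq q\leq l\leq k$ and $0\leq P\leq l-q$ both exponents are non-positive; since $\lambda$ is bounded below by (A1) and $H\geq R>3$ on the transition set, this delivers the uniform bound $|\nabla_g^j\chi_R|\leq C_\chi$ needed for (2).

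The principal obstacle is the inductive derivative estimate on $u$, which is where the full strength of (A3) and (A4$(j)$), $j=1,\ldots,k$, is consumed; after that, everything reduces to the elementary verification that the exponents of $\lambda$ and $H$ are non-positive. This structure also clarifies why $\lambda^{1-k}\notin L^{1}$ is the sharp condition: it guarantees simultaneously the compact support of $\chi_R$ (through $t_R<\infty$) and, via the prefactor $u'=\lambda^{1-k}$, the decay of $\phi_R^{(l)}$ necessary to tame the growth of $\nabla_g^j H$.
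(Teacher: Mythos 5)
Your proposal is correct and follows essentially the same route as the paper: compose a one-variable cut-off $\phi_R$ with the distance-like function $H$ of Theorem \ref{th_distancefunction}, arrange $\phi_R'\sim\lambda^{1-k}$ using the non-integrability hypothesis, control $\phi_R^{(l)}$ via (A3) and (A4($j$)), and conclude by Fa\`a di Bruno together with $\lambda(H)\asymp\lambda(r)$. The only (harmless) difference is cosmetic: you build $\phi_R$ as $\psi\circ(u-u(R))$ with $u$ the antiderivative of $\lambda^{1-k}$, whereas the paper integrates a smoothly truncated $\lambda^{1-k}$ and matches the endpoints by a continuity argument; both yield the same decay $|\phi_R^{(l)}|\lesssim\lambda^{l-k}$ that the final estimate requires.
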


\begin{proof}
Let $\alpha_R$ be a positive constant and define the family $\{\psi_R\}_{R>3}$ of functions  on $\R$ by
\[
\psi_R(t)\dot= \begin{cases}\max\{1-\int_R^t\alpha_R\lambda^{1-k}(s)ds;0\},&\text{if }t>R\\1,&\text{if }t\leq R.\end{cases}
\]
We note that $\psi_R\in C^0(\R)$ and, by the assumptions on $\lambda$, for every $R$ there exists a constant $T(R)\in\R$ depending on $R$ such that $\psi_R(t)\equiv 0$ if and only if $t\in[T(R),\infty)$. We can choose the positive constant $\alpha_R$ small enough so that $T(R)-R>4$. Note also that $\alpha_R\doteq\alpha$ can be chosen independent of $R$ since $\lambda$ is increasing.
Clearly the $\psi_R$ are not $C^2$, so we want to regularize it in a neighborhood of $R$ and $T(R)$, keeping the derivatives controlled. 

Let $\zeta:\R\to\R$ be a $C^\infty$ increasing function such that $\zeta \equiv 0$ in $(-\infty,-1]$, $\zeta \equiv 1$ in $[1,+\infty)$.

There exists $q_R\in[-1,1]$ such that 
\begin{equation}\label{claim1}
\int_0^{R+2}\frac{\zeta(t+R-q_R)}{\lambda^{k-1}(t)}dt=\int_0^{R+2}\frac{\mathds{1}_{[R,+\infty)}(t)}{\lambda^{k-1}(t)}dt =\int_R^{R+2}\frac{1}{\lambda^{k-1}(t)}dt.
\end{equation}
In fact, we have that 
\[
\int_0^{R+2}\frac{\zeta(t+R-1)}{\lambda^{k-1}(t)}dt
\leq \int_0^{R+2}\frac{\mathds{1}_{[R,+\infty)}(t)}{\lambda^{k-1}(t)}dt \leq 
\int_0^{R+2}\frac{\zeta(t+R+1)}{\lambda^{k-1}(t)}dt,
\]
so that \eqref{claim1} follows by continuity. 
Similarly, we have also that there exists $Q_R\in[-1,1]$ such that 
\begin{equation}\label{claim2}
\int_{R+2}^{T(R)+2}\frac{\zeta(-t+T(R)+Q_R)}{\lambda^{k-1}(t)}dt=\int_{R+2}^{T(R)+2}\frac{\mathds{1}_{[0,T(R)]}(t)}{\lambda^{k-1}(t)}dt =\int_{R+2}^{T(R)}\frac{1}{\lambda^{k-1}(t)}dt.
\end{equation}
Now for every $R>1$, define the real smooth function 
\[
\mu_R(t)\dot=\frac{\zeta(-t+T(R)+Q_R)\zeta(t-R+q_R)}{\lambda^{k-1} (t)}\]
and
\[
\phi_R(t)\dot = 1-\alpha\int_{R-2}^t \mu_R(s)ds.
\]
Then $\phi_R\in C^\infty(\R)$, $\phi_R$ is decreasing, $\phi_R\equiv 1$ if $t<R-2$ and $\phi_R = \psi_R \equiv 0$ if $t>T(R)+2$ because of \eqref{claim1} and \eqref{claim2}. Moreover 
\[
|\phi_R'(t)| = \alpha|\mu_R(t)| \leq \frac{\alpha}{\lambda ^{k-1}(t)}.
\]
Similarly, let $Z_R(t)\dot=\zeta(-t+T(R)+Q_R)\zeta(t-R+q_R)$ and $\Lambda(t)\dot= \lambda^{1-k}(t)$. Then, for $j=2,\dots,k$
\begin{align*}
|\phi_R^{(j)}(t)|=\alpha|\mu_R^{(j-1)}(t)|\leq\alpha\sum_{s=1}^{j-1}\binom{j-1}{s}|Z_R^{(s)}(t)||\Lambda^{(j-1-s)}(t)|.
\end{align*}
We have that 
\begin{align*}
|Z_R^{(s)}(t)| \leq 2^s \max_{0\leq l \leq k}\|\zeta^{(l)}\|_\infty^{2}<C
\end{align*}
independently of $R$. Moreover  $\Lambda^{(j-1-s)}(t)$ can be written as a linear combination of terms of the form $\lambda^{1-k-u}(t)\prod_{i=1}^u\lambda^{(e_i)}(t)$, with $1\leq u \leq j-1-s$ and $\sum_{i=1}^ue_i=j-1-s$. Since, by assumption (A4(j)), $\lambda^{(e_i)}(t)\leq M_{4(e_i)}\lambda^{e_i}(t)$, we get
\[\left|\lambda^{1-k-u}(t)\prod_{i=1}^u\lambda^{(e_i)}(t)\right|\leq C\lambda^{1-k-u}(t)\lambda^{j-1-s}(t)\]
and
\begin{align*}
|\Lambda^{(j-1-s)}(t)|\leq C\lambda^{1-k}(t)
\end{align*}
for every $s=1,\dots,j-1$,
so that 
\begin{align}\label{bd_deriv-phi}
|\phi_R^{(j)}(t)|\leq C\lambda^{1-k}(t)
\end{align}
for every $j=2,\dots,k$. 

Now, define the family of cut-off functions $\{\chi_R\}_{R>3}\subset C^\infty_c(M)$ by $\chi_R\dot = \phi_R \circ H$, where $H$ is the $k$-th order distance-like function given by Theorem \ref{th_distancefunction}.
Note that if $r(x)\leq C_H^{-1}(R-2)$, then $\chi_R(x)=1$. Moreover, we have that for every $j=1,\dots,k$, $\nabla_g^{j}\chi_R(x)$ can be written as a linear combination of terms of the form $\phi_R^{(s)}(H)\bigotimes_{i=1}^s\nabla_g^{e_i}H$, with $1\leq s \leq j$ and $\sum_{i=1}^se_i=j$.
We have
\begin{align*}
|\phi_R^{(s)}(H)\bigotimes_{i=1}^s\nabla_g^{e_i}H|&\leq |\phi_R^{(s)}(H)|\prod_{i=1}^s|\nabla_g^{e_i}H|\\
&\leq |\phi_R^{(s)}(H)|\prod_{i=1}^s \lambda (r(x))^{e_i-1}\\
&\leq C \lambda(H(x))^{1-k}\lambda(r(x))^{j -s}\\
&\leq C,
\end{align*}
and thus
\[ |\nabla_g^{j}\chi_R|\leq C \]
for every $j=1,\dots,k$.
\end{proof}

\subsection{Proof of the density result}\label{ProofDens}
We can now give the proof of the following result which we stated in Section \ref{Intro} as Theorem \ref{th_main1}.
\begin{theorem}\label{th_main1Rep}
In the assumptions of Corollary \ref{kCutOff}, we have that $W^{k,p}(M)=W^{k,p}_0(M)$ for all $p\in[1,+\infty)$. 
\end{theorem}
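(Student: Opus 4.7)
The plan is to leverage the $k$-th order cut-off functions from Corollary \ref{kCutOff} to approximate every element of $W^{k,p}(M)$ in the $\|\cdot\|_{k,p}$-norm by functions in $C^\infty_c(M)$. By the generalized Meyers--Serrin theorem recalled in Subsection \ref{Densprop}, $W^{k,p}(M)$ coincides with the completion of $\mathcal{C}^p_k(M)$ with respect to $\|\cdot\|_{k,p}$, so it is enough to show the inclusion $\mathcal{C}^p_k(M) \subset W^{k,p}_0(M)$. Given $f \in \mathcal{C}^p_k(M)$, set $f_R \doteq \chi_R f$; this is smooth and compactly supported, hence trivially in $W^{k,p}_0(M)$. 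The task reduces to proving $f_R\to f$ in $W^{k,p}(M)$ as $R\to\infty$.

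The key estimate comes from the Leibniz rule for iterated covariant derivatives of products of scalar functions: for each integer $0\leq j \leq k$,
\[
|\nabla_g^j(f-f_R)|_g \leq |1-\chi_R|\, |\nabla_g^j f|_g + C_k\sum_{i=1}^{j}|\nabla_g^i\chi_R|_g\, |\nabla_g^{j-i}f|_g,
\]
for a constant $C_k>0$ depending only on $k$. By Corollary \ref{kCutOff}, $0\leq \chi_R \leq 1$ and $|\nabla_g^i\chi_R|_g \leq C_\chi$ uniformly in $R$ for $1\leq i \leq k$, while $\chi_R\equiv 1$ on $B_{C_H^{-1}(R-2)}(o)$ forces both $1-\chi_R$ and $\nabla_g^i\chi_R$ (for $i\geq 1$) to vanish on this ball. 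Setting $\Omega_R \doteq M\setminus B_{C_H^{-1}(R-2)}(o)$, one obtains
\[
\|\nabla_g^j(f-f_R)\|_{L^p(M)} \leq \|\nabla_g^j f\|_{L^p(\Omega_R)} + C_k C_\chi\sum_{i=1}^{j}\|\nabla_g^{j-i}f\|_{L^p(\Omega_R)}.
\]

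Since $f\in \mathcal{C}^p_k(M)$, each function $|\nabla_g^s f|_g^p$, with $0\leq s\leq k$, is integrable on $M$. Because $C_H^{-1}(R-2)\to\infty$ as $R\to\infty$, the sets $\Omega_R$ decrease to the empty set, so by the dominated convergence theorem every $L^p$-norm on $\Omega_R$ at the right-hand side tends to zero. Summing over $j=0,\dots,k$ gives $\|f-f_R\|_{k,p}\to 0$, completing the approximation.

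As for the main obstacle: there is none, once Corollary \ref{kCutOff} is in hand. The non-trivial content of the theorem is entirely packaged into the construction of a family of uniformly $W^{k,\infty}$-bounded cut-offs exhausting $M$, which in turn relies on the sharp higher-order distance-like function built in Theorem \ref{th_distancefunction}. Granted these inputs, the density statement reduces to Leibniz's rule combined with dominated convergence on the shrinking outer regions where $\chi_R$ is non-constant.
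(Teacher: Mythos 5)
Your proposal is correct and follows essentially the same route as the paper: reduce via the Meyers--Serrin-type density of smooth functions to $f$ smooth, multiply by the cut-offs $\chi_R$ from Corollary \ref{kCutOff}, expand $\nabla_g^j(f-\chi_R f)$ by the Leibniz rule, and use the uniform bounds on $\nabla_g^i\chi_R$ together with their support in $M\setminus B_{C_H^{-1}(R-2)}(o)$ to conclude by the vanishing of the tails of the integrable functions $|\nabla_g^s f|^p$. No gaps.
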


\begin{proof}[Proof (of Theorem \ref{th_main1})]
We can apply Corollary \ref{kCutOff} and get the existence of a sequence of  cut-off functions $\left\{\chi_{n}\right\}$ with uniformly bounded covariant derivatives up to order $k$.  This suffices to get the desired density result. Indeed, first recall that $C^{\infty}(M)\cap W^{k,p}(M)$ is dense in $W^{k,p}(M)$ (see for instance \cite{GuidettiGuneysuPallar}). Then, given a smooth function $f\in C^{\infty}(M)\cap W^{k,p}(M)$, define $f_{n}\doteq\chi_{n}f$. One obtains that
\begin{align}
\left\|f_{n}-f\right\|_{L^{p}}=&\left\|(1-\chi_{n})f\right\|_{L^{p}},\label{fn1}\\
\left\|\nabla_{g} (f_{n}-f)\right\|_{L^{p}}\leq&\left\|f\nabla_{g}\chi_{n}\right\|_{L^{p}}+\left\|(1-\chi_{n})\nabla_{g} f\right\|_{L^{p}},\label{fn2}\\
\left\|\nabla_{g}^{j}(f_{n}-f)\right\|_{L^{p}}\leq& \sum_{l=0}^{j-1} \left\|C_{l}|\nabla_{g}^{j-l}\chi_{n}||\nabla_{g}^{l}f|\right\|_{L^{p}}\label{fnj}+\left\|(1-\chi_{n})\nabla_{g}^{j}f\right\|_{L_{p}},\, j=2,\ldots,k,
\end{align}
where $C_l$ are integer constants depending on $j$ and $l$. Note that each of $(1-\chi_{n})$, $\left\{\nabla^{j}\chi_{n}\right\}_{j=1,\ldots,k}$ is uniformly bounded and supported in $\textrm{supp}(1-\chi_{n})$. Moroever, given any compact set $K\subset M$, we have that $\mathrm{supp}(1-\chi_{n})\subset M\setminus K$ for $n\gg1$. since $f\in W^{k,p}(M)$ this permits to conclude that all the terms at the RHS of \eqref{fn1}, \eqref{fn2}, \eqref{fnj} tend to $0$ as $n\to\infty$. More precisely we have that both the terms of the form
\[
 \left\||\nabla_{g}^{j-l}\chi_{n}||\nabla_{g}^{l}f|\right\|_{L^{p}}
 \leq C_\chi\left\|\nabla_{g}^{l}f\right\|_{L^{p}(\textrm{supp}(1-\chi_{n}))}
\]
and the terms of the form
\[ \left\|(1-\chi_{n})\nabla_{g}^{j}f\right\|_{L^{p}} \leq  \left\|\nabla_{g}^{j}f\right\|_{L^{p}(\textrm{supp}(1-\chi_{n}))} 
\]
go to $0$ as $n\to \infty$ since $f\in W^{k,p}(M)$.
\end{proof}

\section{Case p=2}\label{p2}
\subsection{$k$-th order (rough) Laplacian cut-offs}

In this subsection we prove versions of Theorem \ref{th_distancefunctionSect} and Corollary \ref{kCutOff} under weaker assumptions. Namely, we assume a control on the derivatives of the Ricci curvature up to a smaller order. As a price to pay, we do not get a control on the whole $k$-th order covariant derivative $\nabla^k H$ of the distance-like function $H$, but only on its trace, i.e. the rough Laplacian of the $(k-2)$-th covariant derivative of $H$. This result will be used in Corollary \ref{kLaplCutOff} to construct a family of $k$-th order (rough) Laplacian cut-offs. In the rest of this section we will combine these (rough) Laplacian cut-offs and the Weitzenb\"ock formula for the Sampson-Lichnerowicz Laplacian to get the density of smooth compactly supported function in $W^{k,2}(M)$.

\begin{theorem}\label{th_distancefunctionLapl}
Let $(M, g)$ be a complete Riemannian manifold and $o\in M$ a fixed reference point, $r(x)\doteq \mathrm{dist}_{g}(x,o)$.  Let $k\ge 3$ be an integer.  Let $\lambda$ satisfy assumption (A1), (A2), (A3) and (A4($j$)), $j=1,\dots,k-1$.
Suppose that 
\begin{equation}\label{Ass_p=2}
|\mathrm{Sect}|(x)\leq \lambda(r(x))^{2},\quad\text{ and }\quad  |\nabla^{j}\mathrm{Ric}|(x)\leq \lambda(r(x))^{2+j},\qquad 0\leq j\leq k-3.
\end{equation}
Then there exists an exhaustion function $H\in C^{\infty}(M)$ such that for some positive constant $C_\Delta>1$ independent of $x$, we have on $M$ that 
\begin{itemize}
\item[(i)]$C_\Delta^{-2}r(x)\leq H(x)\leq \max\left\{r(x), 1\right\}$;
\item[(ii)] $
\ |\nabla^{j}H|\leq C_\Delta^{j-1}\max\left\{\lambda(r(x))^{j-1}, 1\right\},\qquad1\leq j\leq k-1;$
\item[(iii)] $|\Delta\nabla^{k-2}H|\leq C_\Delta\max\left\{\lambda(r(x))^{k-1}, 1\right\}$.
\end{itemize}
\end{theorem}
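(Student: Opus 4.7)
The plan is to invoke Theorem \ref{th_distancefunctionSect} at one order lower to obtain (i) and (ii), then refine the underlying construction to extract the Laplacian estimate (iii). For (i)--(ii), I would apply Theorem \ref{th_distancefunctionSect} with $k$ replaced by $k-1$: its curvature hypotheses read $|\mathrm{Sect}|\leq D^2\lambda^2$ (with $D=1$) and $|\nabla^j\mathrm{Ric}|\leq \lambda^{j+2}$ for $0\leq j\leq (k-1)-2=k-3$, together with (A1)--(A4($k-1$)) on $\lambda$, which are exactly those in \eqref{Ass_p=2}. This produces an exhaustion $H=H_{k-1}\in C^\infty(M)$ satisfying (i) and (ii).

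For (iii), I would exploit the auxiliary structure inside that same inductive construction. Outside a compact set $H=\theta^{-1}(h)$, where $h$ is provided by Proposition \ref{BS} and satisfies
\[
\Delta h = |\nabla h|^2 - C_\theta\Theta(H_{k-2}),\qquad \Theta=(\theta')^2,
\]
together with $|\nabla^j h|\leq C\lambda^j$ for $1\leq j\leq k-1$ (Lemma \ref{lem_main} applied up to order $k-2$ only requires $C^{k-2,\alpha}$ control of the metric in harmonic coordinates, which Proposition \ref{HarmRadEst} provides under our Ricci hypothesis). Covariantly differentiating the PDE $k-2$ times and expanding by Leibniz together with Fa\`a di Bruno for $\Theta$, combined with the above bound on $\nabla^j h$, the inductive bound $|\nabla^j H_{k-2}|\leq C\lambda^{j-1}$ for $j\leq k-2$, and Lemmas \ref{lem_A7}--\ref{lem_A6}, yields $|\nabla^{k-2}(\Delta h)|\leq C\lambda^k$.

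To upgrade this to the tensor rough Laplacian $\Delta\nabla^{k-2}h$, I would pass to the local harmonic chart on the Cartan-Hadamard cover set up in Subsection \ref{casea}. The PDE there reads $\hat g^{ij}\partial^2_{ij}\hat h=\hat F$; differentiating in a multi-index $\gamma$ of length $k-2$ and isolating the $|\alpha|=0$ term gives
\[
\Delta_g(\partial^\gamma\hat h)=\hat g^{ij}\partial^2_{ij}\partial^\gamma\hat h=\partial^\gamma\hat F-\sum_{|\alpha|\geq 1,\,\alpha+\beta=\gamma}\tbinom{\gamma}{\alpha}(\partial^\alpha\hat g^{ij})(\partial^{2+\beta}_{ij\cdots}\hat h),
\]
whose right-hand side is uniformly bounded in the rescaled chart since $|\alpha|\leq k-2$ (metric $C^{k-2,\alpha}$) and $|2+\beta|\leq k-1$ (Lemma \ref{lem_main}). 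The harmonic-gauge identity $g^{ab}\Gamma^c_{ab}=0$ identifies $\hat g^{ij}\partial^2_{ij}$ on functions with $\Delta_g$, and a careful bookkeeping of the Christoffel corrections distinguishing $(\nabla^{k-2}h)_\gamma$ from $\partial^\gamma\hat h$ (which contain at most $\partial^{k-3}\hat g$, inside the available regularity window) then gives $|\Delta\nabla^{k-2}h|_g\leq C\lambda^k$ after rescaling back. Finally, (iii) is extracted via the chain rule $h=\theta(H)$: expanding
\[
\nabla^{k-2}h = \theta'(H)\nabla^{k-2}H + P,\qquad P=\sum_{s=2}^{k-2}\theta^{(s)}(H)\bigotimes_i\nabla^{e_i}H,
\]
applying the rough Laplacian and solving for $\Delta\nabla^{k-2}H$, each remaining term on the right is $O(\lambda^k)$ by (ii), the previous estimate on $\Delta\nabla^{k-2}h$, and Lemmas \ref{lem_A7}--\ref{lem_A6}; dividing by $\theta'(H)\geq c\lambda$ yields (iii).

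The hard part will be the penultimate step—converting from the coordinate function-Laplacian $\Delta_g(\partial^\gamma\hat h)$, which is directly controlled by the differentiated PDE, to the intrinsic tensor $\Delta\nabla^{k-2}h$. A naive component expansion of the tensor rough Laplacian produces $\partial^{k-1}\hat g$ terms, which are not a priori controlled by our $C^{k-2,\alpha}$ metric bound; their cancellation (or absorption into traced quantities controlled by the PDE) must be engineered by combining the harmonic-gauge identity $g^{ab}\Gamma^c_{ab}=0$ with the elliptic form of the Ricci tensor in harmonic coordinates that underlies Proposition \ref{HarmRadEst}.
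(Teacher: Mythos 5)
Your overall architecture matches the paper's: obtain $h$ (and hence $H=\theta^{-1}\circ h$) from the order-$(k-1)$ construction, differentiate the elliptic equation $\Delta h=|\nabla h|^2-C_\theta\Theta(H_{k-2})$ covariantly $k-2$ times to get $|\nabla^{k-2}\Delta h|\leq C\lambda^k$, commute to reach $\Delta\nabla^{k-2}h$, and finally solve for $\Delta\nabla^{k-2}H$ via the chain rule and divide by $\theta'(H)\gtrsim\lambda$. Parts (i)--(ii) and the first and last steps of (iii) are correct and essentially identical to the paper's argument.

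The gap is in the step you yourself flag as ``the hard part'': the passage from $\nabla^{k-2}\Delta h$ to $\Delta\nabla^{k-2}h$. You propose to do this in the harmonic chart and to ``engineer'' a cancellation of the uncontrolled $\partial^{k-1}\hat g$ terms using the harmonic gauge and the elliptic expression of Ricci, but you do not carry this out, and as stated the route does not close: under the hypotheses of this theorem Proposition \ref{HarmRadEst} only gives $C^{k-2,\alpha}$ control of $\hat g$ (since $\nabla^j\mathrm{Ric}$ is bounded only for $j\leq k-3$), so the naive coordinate expansion of the tensor rough Laplacian genuinely contains terms you cannot bound. The paper avoids coordinates entirely here and uses the purely covariant commutation identity of Lemma \ref{Comm} (Appendix \ref{App_comm}),
\[
\Delta\nabla^{k-2}h-\nabla^{k-2}\Delta h=\mathrm{Riem}*\nabla^{k-2}h+\nabla\mathrm{Ric}*\nabla^{k-3}h+\dots+\nabla^{k-3}\mathrm{Ric}*\nabla h,
\]
whose crucial feature --- obtained via the second Bianchi identity after tracing --- is that the highest-order curvature derivative appearing is $\nabla^{k-3}\mathrm{Ric}$, not $\nabla^{k-3}\mathrm{Riem}$ or anything of order $k-2$. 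Each error term is then bounded by $\lambda^{2+j}\cdot\lambda^{k-2-j}=\lambda^k$ using exactly the assumed curvature bounds and the estimates on $\nabla^j h$. This identity is precisely the missing ingredient: it is the reason the theorem can dispense with any control on $\nabla^{k-2}\mathrm{Ric}$, and without it (or an equivalent explicit cancellation argument in coordinates, which you have not supplied) your proof of (iii) is incomplete.
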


When acting on sections of a tensor bundle, $\Delta$ denotes the rough Laplacian, i.e. $\Delta T = -\nabla^\ast\nabla T = \mathrm{tr}_{12} \nabla^2 T$, for any tensor field $T$. Note that $\Delta$ is equal to minus the Bochner Laplacian $\Delta_B$ we will use in Subsection \ref{subsec_Weitz}.  
\begin{proof}
	Note that in the proof of Theorem \ref{th_distancefunctionSect}, the assumption $|\nabla^{k-2}\mathrm{Ric}|\leq \lambda(r(x))^{k}$ was used only to control $|\nabla^{k} h|$. Accordingly, by the proof of Theorem \ref{th_distancefunctionSect} we already know that there exists a smooth exhaustion function $h\in C^{\infty}(M)$ such that 
\begin{align}
&C^{-1}r(x)\lambda(r(x))\leq h(x)\leq C\max\left\{1;r(x)\lambda(r(x))\right\}\quad\,\forall\, x\in M\nonumber\\
&|\nabla^{j} h|(x)\leq C\lambda^{j}(r(x))\quad\,\forall\,x\in M\setminus\overline{B}_{\bar{\rho}}(o),\,\,j=1,\ldots,k-1.\label{hj}
\end{align}
Moreover, by construction
\begin{equation}\label{Eqh}
\ \Delta h=|\nabla h|^2-C_{\theta}\left(\theta^{\prime}(H_{k-2}(x))\right)^2=|\nabla h|^2-C_{\theta}\Theta(H_{k-2}(x))\quad\,\mathrm{on}\,M\setminus\bar{B}_{R_{\theta}}(o),
\end{equation}
for some constants $C_{\theta}>0$ and $R_{\theta}>0$. Here $H_{k-2}$ is such that for some positive constant $C>1$ (independent of $x$ and $o$) we have on $M$ that
\begin{align*}
&Cr(x)\leq H_{k-2}(x)\leq \max\left\{r(x), 1\right\}\\
&|\nabla H_{k-2}|(x)\leq 1\\
&|\nabla^{j}H_{k-2}|(x)\leq C\max\left\{\lambda^{j-1}(x),1\right\},\quad\,j=2,\ldots,k-2.
\end{align*}
Taking $\nabla^{k-2}$ of \eqref{Eqh} we obtain that
\[
\ \nabla^{k-2}\Delta h
=2\nabla^{k-3}\left(\nabla^{2}h (\nabla h, \cdot)\right)-C_{\theta}\nabla^{k-2}(\Theta (H_{k-2})).
\]
Hence
\[
\ |\nabla^{k-2}\Delta h|\leq C\left[|\nabla^{k-1}h||\nabla h|+|\nabla^{k-2}h||\nabla^{2}h|+\ldots+|\nabla^{2}h||\nabla^{k-2}h|\right]+C_{\theta} |\mathcal{L}|.
\]
Here $\mathcal{L}$ is a linear combination of terms of the form
\[
\ \Theta^{(j)}(H_{k-2})\prod_{s=1}^{j}\nabla^{e_{s}-e_{s-1}}H_{k-2},
\]
where $e_{0}=0$, $j\in\left\{1,\ldots,k-2\right\}$ and $\left\{e_{s}\right\}_{s=1}^{j}$ is an increasing subset of $\left\{1,\ldots,k-1\right\}$ with $e_{j}=k-2$.\\
Using the above properties of $h$ and $H_{k-2}$ and Lemma \ref{lem_A6} we hence get that
\[
\ |\nabla^{k-2}\Delta h|\leq C\lambda^{k}(r(x)).
\]
By Lemma \ref{Comm} in Appendix \ref{App_comm}, we thus obtain that
\begin{align}\label{DeltaNablah}
\ |\Delta\nabla^{k-2}h|=&|\nabla^{k-2}\Delta h|+|\mathrm{Riem}*\nabla^{k-2}h
+\nabla \mathrm{Ric}*\nabla^{k-3} h+\ldots+\nabla^{k-3}\mathrm{Ric}*\nabla h
|\\
\leq&|\nabla^{k-2}\Delta h|+C\left(|\mathrm{Riem}||\nabla^{k-2}h|
+|\nabla \mathrm{Ric}||\nabla^{k-3} h|+\ldots+|\nabla^{k-3}\mathrm{Ric}||\nabla h
|\right)\nonumber\\
\leq&C\left(\lambda(r(x))^k+\sum_{j=0}^{k-3}\lambda(r(x))^{2+j}\lambda(r(x))^{k-2-j}\right)\nonumber\\
\leq&C\lambda(r(x))^{k}.\nonumber
\end{align}
For the definition of the notation $\ast$ appearing in the latter formula see Section \ref{SectNotations}.

As in the proof of Theorem \ref{th_distancefunction} define $H\doteq H_{k-1}\in C^{\infty}(M\setminus B_{R_{H}}(o))$ by $H=\theta^{-1}\circ h$. Then $H$ is distance-like and
\[
\ |\nabla^{j}H|\leq C^{j-1}\max\left\{\lambda(r(x))^{j-1}, 1\right\}\qquad1\leq j\leq k-1.
\] 
Moreover we have that
\[
\ \Delta\nabla^{k-2}h=\Delta\left(\theta^{\prime}(H)\nabla^{k-2} H\right)+\mathcal{L}_{1},
\]
where $\mathcal{L}_{1}$ is a linear combination of terms of the form $\Delta\left(\theta^{(s)}(H)\bigotimes\nabla^{e_{i}}H\right)$, with $2\leq s\leq k-2$ and $\sum_{i=1}^{s}e_{i}=k-2$. Letting $\left\{E_{i}\right\}$ being a local orthonormal frame on $M$, note that
\begin{align*}
\Delta(\theta^{\prime}(H)\nabla^{k-2}H)=&\sum_{i}\nabla_{E_{i}}\left(\theta^{\prime\prime}(H)\nabla_{E_{i}}H\nabla^{k-2}H+\theta^{\prime}(H)\nabla_{E_{i}}\nabla^{k-2}H\right)\\
=&\theta^{\prime\prime\prime}(H)|\nabla H|^2\nabla^{k-2}H+\theta^{\prime\prime}(H)\Delta H\nabla^{k-2}H\\
&+2\theta^{\prime\prime}(H)\nabla_{\nabla H}\nabla^{k-2}H+\theta^{\prime}(H)\Delta\nabla^{k-2}H,
\end{align*}
and
\begin{align*}
\Delta\left[\theta^{(s)}(H)\otimes_{i=1}^{s}\nabla^{e_{i}}H\right]=&\sum_{i}\nabla_{E_{i}}\left(\theta^{(s+1)}(H)\nabla_{E_{i}}H\otimes\bigotimes_{i=1}^{s}\nabla^{e_{i}}H+\theta^{(s)}(H)\nabla_{E_{i}}\left(\otimes_{i=1}^{s}\nabla^{e_{i}}H\right)\right)\\
=&\theta^{(s+2)}(H)|\nabla H|^2\bigotimes_{i=1}^{s}\nabla^{e_{i}}H+\theta^{(s+1)}(H)\Delta H\bigotimes_{i=1}^{s}\nabla^{e_{i}}H\\
&+2\theta^{(s+1)}(H)\nabla_{\nabla H}\left(\bigotimes_{i=1}^{s}\nabla^{e_{i}}H\right)+\theta^{(s)}(H)\Delta\left(\bigotimes_{i=1}^{s}\nabla^{e_{i}}H\right).
\end{align*}
Hence, using Lemma \ref{lem_A7}, Lemma \ref{lem_A8}, and proceeding as for the end of the proof of Theorem \ref{th_distancefunctionRic}, we get that
\begin{align*}
|\Delta\nabla^{k-2}H|=&\frac{C}{\theta^{\prime}(H)}\left[|\Delta \nabla^{k-2}h|+|\theta'''(H)||\nabla H|^2|\nabla^{k-2}H|+|\theta''(H)||\Delta H||\nabla^{k-2}H|\right.\\
&\left.+|\theta''(H)||\nabla_{\nabla H}\nabla^{k-2}H|+\max_{s=2,\ldots, k-2}|\theta^{(s+2)}(H)||\nabla H|^2|\bigotimes_{i=1}^{s}\nabla^{e_{i}}H|\right.\\
&\left.+\max_{s=2,\ldots, k-2}|\theta^{(s+2)}|\theta^{(s+1)}(H)||\Delta H \bigotimes_{i=1}^{s}\nabla^{e_{i}}H|\right.\\
&\left.+\max_{s=2,\ldots,k-2}(|\theta^{(s+1)}(H)||\nabla_{\nabla H}\bigotimes_{i=1}^{s}\nabla^{e_{i}}H|+|\theta^{(s)}(H)||\Delta(\bigotimes_{i=1}^{s}\nabla^{e_{i}}H)|)\right]\\
\leq& C\lambda(r)^{k-1}.
\end{align*}
\end{proof}
\bigskip
Using the function $H$ coming from Theorem \ref{th_distancefunctionLapl},
we want now to produce a sequence of higher order (rough) Laplacian cut-off functions. This will be done in the following 

\begin{corollary}\label{kLaplCutOff}
Let $(M, g)$ be a complete Riemannian manifold and $o\in M$ a fixed reference point $r(x)\doteq\mathrm{dist}_{g}(x,o)$. Let $k\geq 3$ be an integer. Let $\lambda$ satisfy assumptions (A1), (A2), (A3), and (A4(j)) for $j=1,\ldots,k-1$, and suppose that $\lambda^{1-k}\notin L^{1}([1,+\infty))$. Suppose that 
\begin{equation}\label{Ass_p=22}
|\nabla^{j}\mathrm{Riem}|(x)\leq \lambda(r(x))^{2+j},\qquad 0\leq j\leq k-3.
\end{equation}
Then there exists a family of cut-off functions $\left\{\chi_{R}\right\}\subset C_{c}^{\infty}(M)$, and a constant $C>0$ independent of $R$ such that, 
\begin{enumerate}
\item $\chi_{R}=1$ on $B_{C_{H}^{-1}(R-2)}(o)$;
\item $|\nabla^{j}\chi_{R}|\leq C\lambda^{-k+j}$, \quad $j=1,\ldots,k-1$;
\item $|\Delta\nabla^{k-2}\chi_{R}|\leq C$,
\end{enumerate}
\end{corollary}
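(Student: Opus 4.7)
The plan is to follow the construction in Corollary \ref{kCutOff}, but using the exhaustion function $H$ provided by Theorem \ref{th_distancefunctionLapl}, which controls $|\nabla^{j}H|$ for $j\leq k-1$ and, crucially, $|\Delta\nabla^{k-2}H|$. Concretely, I would set $\chi_R \doteq \phi_R \circ H$, where $\{\phi_R\}_{R>3}\subset C^\infty(\R)$ is exactly the family built in the proof of Corollary \ref{kCutOff}: $\phi_R\equiv 1$ on $(-\infty,R-2]$, $\phi_R\equiv 0$ on $[T(R)+2,+\infty)$ for some $T(R)>R+4$, and $|\phi_R^{(j)}(t)|\leq C\lambda^{1-k}(t)$ for $j=1,\dots,k$. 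This construction is available since $\lambda^{1-k}\not\in L^1([1,+\infty))$ and (A4($j$)) holds for $j\leq k-1$. Property (1) is then immediate from Theorem \ref{th_distancefunctionLapl}(i).

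For (2), a Fa\`a di Bruno expansion writes $\nabla^j\chi_R$ as a linear combination of terms $\phi_R^{(s)}(H)\bigotimes_{i=1}^s\nabla^{e_i}H$ with $1\leq s\leq j$ and $\sum_i e_i=j$. Using (A2) to compare $\lambda(H)$ and $\lambda(r)$, together with $|\phi_R^{(s)}(H)|\leq C\lambda^{1-k}(r)$ and the bound $|\nabla^{e_i}H|\leq C\lambda^{e_i-1}(r)$ from Theorem \ref{th_distancefunctionLapl}(ii), each such term is bounded by $C\lambda^{1-k+j-s}(r)\leq C\lambda^{j-k}(r)$, since $s\geq 1$.

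The verification of (3) is the core of the argument: I would apply the rough Laplacian to the Fa\`a di Bruno decomposition of $\nabla^{k-2}\chi_R$ and expand repeatedly via the Leibniz rule $\Delta(F\otimes T)=(\Delta F)\otimes T+2\nabla F\ast\nabla T+F\otimes\Delta T$. Modulo lower-order cross-terms, the resulting summands fall into four schematic families, indexed by $1\leq s\leq k-2$ and $\sum_i e_i=k-2$:
\[
\phi_R^{(s+2)}(H)|\nabla H|^2\bigotimes_i\nabla^{e_i}H,\ \ \phi_R^{(s+1)}(H)\Delta H\bigotimes_i\nabla^{e_i}H,\ \ \phi_R^{(s+1)}(H)\nabla H\ast\nabla\bigl(\bigotimes_i\nabla^{e_i}H\bigr),\ \ \phi_R^{(s)}(H)\Delta\bigl(\bigotimes_i\nabla^{e_i}H\bigr).
\]
The first three families are controlled by elementary power counting using $|\Delta H|\leq C|\nabla^2 H|\leq C\lambda(r)$ and Theorem \ref{th_distancefunctionLapl}(ii). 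The fourth is the delicate case: if $s=1$ it reduces to $\phi_R'(H)\Delta\nabla^{k-2}H$, bounded by $C\lambda^{1-k}(r)\cdot C\lambda^{k-1}(r)=C$ via Theorem \ref{th_distancefunctionLapl}(iii); if $s\geq 2$ then $e_i\leq k-3$ for all $i$, and after Leibniz one estimates $\Delta\nabla^{e_1}H$ by the commutation formula from Appendix \ref{App_comm},
\[
\Delta\nabla^{e_1}H=\nabla^{e_1}\Delta H+\mathrm{Riem}\ast\nabla^{e_1}H+\cdots+\nabla^{e_1-1}\mathrm{Ric}\ast\nabla H,
\]
noting that $|\nabla^{e_1}\Delta H|\leq C|\nabla^{e_1+2}H|\leq C\lambda^{e_1+1}(r)$ because $e_1+2\leq k-1$, while the curvature remainder is $\leq C\lambda^{e_1+1}(r)$ by the hypothesis on $|\nabla^j\mathrm{Riem}|$ in \eqref{Ass_p=22}.

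The main obstacle is the bookkeeping: in the fourth family for $s\geq 2$, one has to check that combining $|\phi_R^{(s)}(H)|\leq C\lambda^{1-k}(r)$, $|\Delta\nabla^{e_1}H|\leq C\lambda^{e_1+1}(r)$, and $|\nabla^{e_i}H|\leq C\lambda^{e_i-1}(r)$ for $i\geq 2$, the total exponent of $\lambda(r)$ comes out to $(1-k)+(e_1+1)+\sum_{i\geq 2}(e_i-1)=1-s\leq 0$; analogous calculations for the first three families yield nonpositive exponents as well. Taking the maximum over the (finitely many) schematic families gives the desired uniform bound $|\Delta\nabla^{k-2}\chi_R|\leq C$.
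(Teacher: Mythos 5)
Your proposal is correct, and all three properties are verified with the right ingredients; the only divergence from the paper is in how you organize the estimate of $\Delta\nabla^{k-2}\chi_R$ in property (3). The paper commutes at the outermost level first: by Lemma \ref{Comm} applied to $\chi_R$ itself, $|\Delta\nabla^{k-2}\chi_{R}|\leq |\nabla^{k-2}\Delta\chi_{R}|+|\mathrm{Riem}\ast\nabla^{k-2}\chi_{R}+\cdots+\nabla^{k-3}\mathrm{Ric}\ast\nabla\chi_{R}|$, then expands $\Delta\chi_R=\phi_R'(H)\Delta H+\phi_R''(H)|\nabla H|^2$ and takes $\nabla^{k-2}$ of each piece via Fa\`a di Bruno; the curvature remainder is absorbed directly by \eqref{Ass_p=22} together with property (2). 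You instead expand $\nabla^{k-2}\chi_R$ first and push $\Delta$ through the tensor products, which is structurally the same computation the paper carries out at the end of the proof of Theorem \ref{th_distancefunctionLapl} (there with $\theta$ in place of $\phi_R$). Both routes rest on the same three facts --- the bounds $|\phi_R^{(j)}|\leq C\lambda^{1-k}$, the estimates (ii)--(iii) of Theorem \ref{th_distancefunctionLapl}, and the commutation formula --- and your power counting ($1-s\leq 0$ in the delicate fourth family, with the $s=1$ case isolated as the unique place where (iii) is indispensable) is accurate. The paper's ordering is marginally cleaner in that the top-order term $\nabla^{k-2}(\phi_R'\Delta H)$ with $c_{k-2}=k-2$ is the single point of entry for (iii) and no case split on $s$ is needed; your ordering makes more transparent why the hypothesis $|\nabla^j\mathrm{Riem}|\leq\lambda^{2+j}$ is only required for $j\leq k-3$, since the commutation formula is invoked only on $\nabla^{e_i}H$ with $e_i\leq k-2$.
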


\begin{proof}
As in Corollary \ref{kCutOff}, For each radius $R\gg1$, define $\chi_{R}\doteq\phi_{R}\circ H$, with $H$ the distance-like function given by Theorem \ref{th_distancefunctionLapl} and $\phi_{R}$ defined as in the proof of Corollary \ref{kCutOff}. Properties (1) and (2) follow from the proof of Corollary \ref{kCutOff}. About (3), using Lemma \ref{Comm} we note that
\begin{equation}\label{DeltaNablaChir}
|\Delta\nabla^{k-2}\chi_{R}|\leq |\nabla^{k-2}\Delta\chi_{R}|+|\mathrm{Riem}*\nabla^{k-2}\chi_{R}
+\nabla \mathrm{Ric}*\nabla^{k-3} \chi_{R}+\ldots\nabla^{k-3}\mathrm{Ric}*\nabla \chi_{R}|.
\end{equation}
About the first term on the RHS of \eqref{DeltaNablaChir}, note that
\[
\ \Delta (\phi_{R}\circ H)=\phi_{R}^{\prime}\Delta H+ \phi_{R}^{\prime\prime}|\nabla H|^{2}, 
\]
and hence
\[
\ |\nabla^{k-2}\Delta \chi_{R}|=|\nabla^{k-2}(\phi_{R}^{\prime}\Delta H)|+|\nabla^{k-2}(\phi_{R}^{\prime\prime}|\nabla H|^2)|.
\]
We have that $\nabla^{k-2}(\phi_{R}^{\prime}\Delta H)$ can be written as
\begin{equation}\label{otimes1}
\nabla^{k-2}(\phi_{R}^{\prime}\Delta H)=\sum_{\mathbf{c}}C_{\mathbf{c}}\phi^{(c_{1}+\ldots+c_{k-3}+1)}\left[\bigotimes_{t=1}^{k-3}(\nabla^{t}H)^{\otimes c_{t}}\right]\otimes\nabla^{c_{k-2}}\Delta H,
\end{equation}
where $\mathbf{c}$ varies among all $(k-2)$-vectors of nonnegative integers such that $\sum_{t=1}^{k-3}tc_{t}+c_{k-2}=k-2$, and the $C_{\mathbf{c}}$ are positive integer constants.\\
With the same notations, $\nabla^{k-2}(\phi_{R}^{\prime\prime}|\nabla H|^2)$ can be written as
\begin{align}
\nabla^{k-2}(\phi_{R}^{\prime\prime}|\nabla H|^2)&=\sum_{\mathbf{c}}C_{\mathbf{c}}\phi^{(c_{1}+\ldots+c_{k-3}+2)}\left[\bigotimes_{t=1}^{k-3}(\nabla^{t}H)^{\otimes c_{t}}\right]\otimes\nabla^{c_{k-2}}|\nabla H|^2\label{otimes2}\\
&=\sum_{\mathbf{c}}C_{\mathbf{c}}\phi^{(c_{1}+\ldots+c_{k-3}+2)}\left[\bigotimes_{t=1}^{k-3}(\nabla^{t}H)^{\otimes c_{t}}\right]\otimes\nabla^{c_{k-2}-1}(\nabla^{2}H(\nabla H, \cdot)).\nonumber
\end{align}
Inserting \eqref{bd_deriv-phi} in \eqref{otimes1} and using Theorem \ref{th_distancefunctionLapl} (also combined with Lemma \ref{Comm}), we get
\[
\ |\nabla^{k-2}(\phi_{R}^{\prime}\Delta H)|
\leq C\lambda^{1-k}\left[\lambda^{\sum_{t=1}^{k-3}(tc_t-1) + c_{k-2}+1}\right]
\leq C\lambda^{1-k}\lambda^{k-2}=C\lambda^{-1}= C.
\]
Analogously, recalling also that
\[
\ |\nabla^{c_{k-2}-1}(\nabla^2 H(\nabla H, \cdot)|\leq C\left[|\nabla^{c_{k-2}+1}H||\nabla H|+|\nabla^{c_{k-2}}H||\nabla^{2}H|+\ldots+|\nabla^{2}H||\nabla^{c_{k-2}}H|\right],
\]
by \eqref{otimes2}, we get that
\[
\ |\nabla^{k-2}(\phi_{R}^{\prime\prime}|\nabla H|^2)|
\leq C\lambda^{1-k}\left[\lambda^{\sum_{t=1}^{k-3}(tc_t-1) + c_{k-2}}\right]\leq
C\lambda^{1-k}\left[\lambda^{k-2+1}\right]\leq C.
\]
About the second term on the RHS of \eqref{DeltaNablaChir},we have that by \eqref{Ass_p=22} and property (2) of the cut-off functions,
\begin{align*}
|\mathrm{Riem}||\nabla^{k-2}\chi_{R}|\leq& C\lambda^2\lambda^{-k+k-2}=C,\\
|\nabla^{l}\mathrm{Ric}||\nabla^{k-2-l}\chi_{R}|\leq&C\lambda^{2+l}\lambda^{-k+k-2-l}=C, 
\end{align*}
for $l=1,\ldots,k-3$. This concludes the proof of property (3) of the cut-off functions and hence yields the validity of the Lemma.
\end{proof}
\subsection{Weitzenb\"ock formulas}\label{subsec_Weitz}
Fix a tensor bundle $E\to M$ with $m$-dimensional fibers, endowed with an inner product induced by the metric $g$ and a compatible connection $\nabla$ induced by the Levi-Civita connection on $M$. A Lichnerowicz Laplacian is a second order differential operator acting on the space of smooth sections $\Gamma(E)$ of the form
\begin{equation}\label{Lic}
\Delta_{L}=\Delta_{B}+c\mathfrak{Ric},
\end{equation}
for a suitable constant $c$. Here $\Delta_{B}=-\mathrm{tr}_{12}(\nabla^2)=\nabla^{*}\nabla$ is the Bochner Laplacian (with $\nabla^{*}$ the formal $L^{2}$-adjoint of $\nabla$) and $\mathfrak{Ric}$ is a smooth symmetric endomorphism of $E$ known as Weitzenb\"ock curvature operator. When $T$ is a $(0,k)$-tensor, the Weitzenb\"ock curvature operator, takes the form
\begin{equation}\label{fkric}
\ \mathfrak{Ric}(T)(X_{1},\ldots, X_{k})=\sum_{i=1}^{k}\sum_{j}\left(\mathrm{R}(E_{j}, X_{i})T\right)(X_{1}, \ldots, E_{j}, \ldots, X_{k}),
\end{equation}
where $\left\{E_{i}\right\}$ is a local orthonormal frame and
\[
\ R(X,Y)\doteq \nabla^{2}_{X,Y}-\nabla^{2}_{Y,X}=\nabla_{X}\nabla_{Y}-\nabla_{Y}\nabla_{X}-\nabla_{[X,Y]},
\]
which may be applied to any tensor field. Note that, by the classical Bochner-Weitzenb\"ock formula, the Hodge Laplacian on exterior differential forms decomposes as in \eqref{Lic} with $c=1$. 

Since, obviously, $R(X,Y)$ vanishes on functions, for any $(0,k)$-tensor $T$, we have that
\begin{align*}
\ \mathfrak{Ric}(T)(X_{1},\ldots,X_{k})=&-\sum_{i=1}^{k}\left[\sum_{j}\sum_{p\neq i}T(X_{1},\ldots,R(E_{j},X_{i})X_{p},\ldots,E_{j},\ldots,X_{k})\right.\\
&\left.+\sum_{j}T(X_{1},\ldots,R(E_{j},X_{i})E_{j},\ldots,X_{k})\right]\\
=& -\sum_{i=1}^{k}\left[\sum_{j}\sum_{p\neq i}T(X_{1},\ldots,\sum_{l}g(R(E_{j},X_{i})X_{p},E_{l})E_{l},\ldots,E_{j},\ldots,X_{k})\right.\\
&\left.-\sum_{\nu}T(X_{1},\ldots,\mathrm{Ric}(E_{\nu},X_{i})E_{\nu},\ldots,X_{k})\right]\\
=&-\sum_{i=1}^{k}\left[\sum_{j}\sum_{p\neq i}\sum_{l}T(X_{1},\ldots, E_{l},\ldots, E_{j},\ldots, X_{k})\mathrm{Riem}(E_{j},X_{i},X_{p},E_{l})\right.\\
&\left.-\sum_{\nu}T(X_{1},\ldots,E_{\nu},\ldots, X_{k})\mathrm{Ric}(X_{i},E_{\nu})\right],
\end{align*}
where we are setting
\begin{align*}
&\mathrm{Riem}(X,Y,Z,W)=g(\mathrm{R}(X,Y)Z,W)\\
&\mathrm{Ric}(X,Y)=\mathrm{tr}\,\mathrm{Riem}(X,\cdot, \cdot, Y).
\end{align*}
\medskip

This curvature term has a quite complicated expression, but it can be estimated in terms of the curvature operator $\mathcal{R}$ of $M$ (the linear extension to $\Lambda^{2}TM$ of the $(2,2)$-Riemann curvature tensor); see e.g. \cite[Corollary 9.3.4]{Petersen_ed3}.

\begin{proposition}\label{ContrCurvTerm}
Let $T$ be a $(0,s)$ tensor. If the curvature operator $\mathcal{R}$ satisfies $\mathcal{R}\geq \alpha$, for some constant $\alpha<0$, then $\langle\mathfrak{Ric}(T), T\rangle\geq \alpha C|T|^2$, with $C$ constant depending only on $s$.
\end{proposition}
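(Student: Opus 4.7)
The plan is to reduce the inequality to a pointwise computation and then interpret $\mathfrak{Ric}$ in representation-theoretic terms, so that the lower bound on $\mathcal{R}$ translates directly into a lower bound on $\langle \mathfrak{Ric}(T),T\rangle$. Fix a point $p\in M$ and an orthonormal basis $\{E_i\}$ of $T_pM$. Via the natural identification $\Lambda^2 T_pM\cong \mathfrak{so}(T_pM)$, each simple $2$-vector $E_i\wedge E_j$ acts as a skew-symmetric endomorphism of $T_pM$, and more generally every $\omega\in\Lambda^2T_pM$ induces a derivation $\rho(\omega)$ on the tensor space $(T_p^*M)^{\otimes s}$ via the standard representation of $\mathfrak{so}(T_pM)$ on $T_p^*M$. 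Since this action is by skew-adjoint operators with respect to the inner product induced by $g$, each $\rho(\omega)$ is skew-adjoint on $(T_p^*M)^{\otimes s}$.

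The first step is to rewrite the Weitzenb\"ock endomorphism in Lie-algebraic form. Diagonalizing $\mathcal{R}$ on $\Lambda^2T_pM$ with respect to an orthonormal eigenbasis $\{\omega_a\}$ and eigenvalues $\{\lambda_a\}$ satisfying $\lambda_a\geq\alpha$, so that
\begin{equation*}
\mathrm{Riem}(E_i,E_j,E_k,E_l) \;=\; \sum_a \lambda_a\,\langle E_i\wedge E_j,\omega_a\rangle\,\langle E_k\wedge E_l,\omega_a\rangle,
\end{equation*}
one verifies starting from \eqref{fkric} that
\begin{equation*}
\mathfrak{Ric}(T) \;=\; -\sum_a \lambda_a\,\rho(\omega_a)^2 T.
\end{equation*}
This is the standard Lichnerowicz-type reformulation; its proof is an essentially notational bookkeeping, matching the two sums over pairs of indices $(i,p)$ in \eqref{fkric} with the expansion of $\rho(\omega_a)^2$ on simple tensors, using that $R(E_i,E_j)$ acts on $T_p^*M$ exactly as $-\rho(E_i\wedge E_j)$ in the natural representation.

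Pairing this identity with $T$ and using skew-adjointness of $\rho(\omega_a)$ gives
\begin{equation*}
\langle \mathfrak{Ric}(T),T\rangle \;=\; \sum_a \lambda_a\,|\rho(\omega_a)T|^2.
\end{equation*}
Since $\lambda_a\geq\alpha$ with $\alpha<0$ and each $|\rho(\omega_a)T|^2\geq 0$, the inequality
\begin{equation*}
\langle \mathfrak{Ric}(T),T\rangle \;\geq\; \alpha\sum_a |\rho(\omega_a)T|^2
\end{equation*}
follows immediately. To finish, observe that the quadratic form $T\mapsto \sum_a|\rho(\omega_a)T|^2$ on the finite-dimensional space $(T_p^*M)^{\otimes s}$ is independent of the choice of orthonormal basis $\{\omega_a\}$; in fact, up to a sign it coincides with the Casimir operator of $\mathfrak{so}(T_pM)$ in the $s$-fold tensor representation. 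Its operator norm is therefore a universal constant $C$ depending only on $s$ (and the dimension $m$, fixed throughout), and $\sum_a|\rho(\omega_a)T|^2\leq C|T|^2$. Combined with the previous display and the sign $\alpha<0$, this yields the claimed bound. The main technical obstacle I expect is the first step: one must carefully match sign conventions and index contractions between \eqref{fkric} and the explicit description of $\rho(\omega_a)^2$, a computation conceptually easy but notationally heavy.
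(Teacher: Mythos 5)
Your argument is correct and is essentially the standard proof of this fact: the paper itself does not prove the proposition but cites \cite[Corollary 9.3.4]{Petersen_ed3}, whose proof is precisely the Lie-algebraic identity $\langle\mathfrak{Ric}(T),T\rangle=\sum_a\lambda_a|\rho(\omega_a)T|^2$ followed by the Casimir bound $\sum_a|\rho(\omega_a)T|^2\leq C|T|^2$. Your observation that $C$ also depends on the (fixed) dimension $m$, not only on $s$, is accurate and is a harmless imprecision already present in the statement as quoted.
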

\subsection{A Lichnerowicz Laplacian on symmetric $(0,k)$-tensors}
Another remarkable example of differential operator which can be rewritten as a Lichnerowicz Laplacian was introduced by J. H. Sampson in \cite{Sampson} on smooth sections of the bundle $S^{(0,k)}(M)$ of totally symmetric $(0,k)$-tensors (see also \cite{Lichnerowicz}). 

Namely, consider the symmetrization operator $s_{k}$, i.e. the projection of the full tensor bundle $T^{0,k}(M)$ on $S^{(0,k)}(M)$. Given $h$ a $(0,k-1)$-tensor field, we can define the totally symmetric tensor
\[
h^{S}(X_{1},\ldots,X_{k-1})\dot =s_{k-1}(h)(X_{1},\ldots,X_{k-1})=\frac{1}{(k-1)!}\sum_{\sigma\in\Pi_{k-1}}h(X_{\sigma(1)},\ldots,X_{\sigma(k-1)}).
\] 
Let us define the operator $D_{S}:\Gamma S^{(0,k-1)}(M)\to\Gamma S^{(0,k)}(M)$ by
\begin{align*}
(D_{S}h^{S})(X_{0}, \ldots, X_{k})=&ks_{k}(\nabla h^{S})(X_{0}, \ldots, X_{k}).
\end{align*}
Its formal  $L^{2}$- adjoint $D_{S}^{*}: \Gamma S^{(0,k)}(M)\to\Gamma S^{(0,k-1)}(M)$ is then given by
\begin{align*}
(D_{S}^{*}h^{S})(X_{1}, \ldots, X_{k-2})=&-\sum_{i}(\nabla_{E_{i}}h^{S})(E_{i}, X_{1}, \ldots, X_{k-2}).
\end{align*}
Letting $\Delta_{\mathrm{Sym}}=D_{S}^{*}D_{S}-D_{S}D_{S}^{*}$, we have by  \cite{Sampson} (see also Appendix \ref{AppB} for a proof) that
\[
\ \Delta_{\mathrm{Sym}}h^{S}=\nabla^{*}\nabla h^{S}-\mathfrak{Ric}(h^{S}),
\]
that is to say, $\Delta_{\mathrm{Sym}}$ is of type \eqref{Lic} for $c=-1$.

In particular, one can compute that
\begin{align}
\frac{1}{2}\Delta |h^{S}|^2=&-\langle\nabla^{*}\nabla h^{S}, h^{S}\rangle+|\nabla h^{S}|^2\label{Samps2}\\
=&-\langle\Delta_{\mathrm{Sym}}h^{S}, h^{S}\rangle-\langle\mathfrak{Ric}(h^{S}), h^{S}\rangle+|\nabla h^{S}|^2\nonumber
\end{align}

It is worth mentioning that for $(0,2)$-tensors, in \cite[Chapter 9]{Petersen_ed3} it was introduced a different Lichnerowicz Laplacian acting on smooth sections of $S^{(0,2)}(M)$, with particular focus on applications of the Bochner technique to this operator. However the operator $\Delta_{\mathrm{Sym}}$ seems to conform better to our scope.
\subsection{Proof of Theorem \ref{Dens_p=2}}
In our assumptions, we know by Corollary \ref{kLaplCutOff} that there exists a sequence of cut-off functions $\left\{\chi_{n}\right\}\subset C_{c}^{\infty}(M)$, and a constant $C>0$ independent of $n$ such that, 
\begin{enumerate}
\item $\chi_{n}=1$ on $B_{C_{H}^{-1}(n-2)}(o)$;
\item $|\nabla^{j}\chi_{n}|\leq C\lambda^{-k+j}$, \quad $j=1,\ldots,k-1$;
\item $|\Delta\nabla^{k-2}\chi_{n}|\leq C$,
\end{enumerate}
Since smooth functions are dense in $W^{k,2}(M)$, to prove the density result it is sufficient to consider $f\in C^{\infty}(M)\cap W^{k,2}(M)$; see for instance \cite{GuidettiGuneysuPallar}. We want to prove that $\chi_{n}f$ converges to $f$ in $W^{k,2}(M)$. The lower order terms can be treated as in the proof of Theorem \ref{th_main1Rep}, by using the dominated convergence theorem and the properties of the cut-off functions. Here we prove that
\[
\ \int_{M}|\nabla^{k}(\chi_{n}f)-\nabla^{k}f|^2d\mathrm{vol}_{g}\to 0,
\]
as $n\to\infty$. 
Note that
\begin{align*}
\int_{M}|\nabla^{k}(\chi_{n}f)-\nabla^{k}f|^2d\mathrm{vol}_{g}=&\int_{M}\left|\left[\sum_{i=0}^{k}{{k}\choose{i}} \nabla^{k-i}\chi_{n}\otimes\nabla^{i}f\right]-\nabla^{k}f\right|^2d\mathrm{vol}_{g}\\
\leq&\int_{M}(1-\chi_{n})^2|\nabla^{k}f|^2+\sum_{i=0}^{k-1}{{k}\choose{i}}\int_{M}|\nabla^{k-i}\chi_{n}|^2|\nabla^{i} f|^2d\mathrm{vol}_{g}
\end{align*}
Taking into account the properties of the cut-off functions, the only non-trivial term to study is
\[
\ \int_{M}|f|^{2}|\nabla^{k}\chi_{n}|^2
\]
To prove that this goes to $0$ as $n\to\infty$ we are going to use \eqref{Samps2} with $h\doteq \nabla^{k-1}\chi_{n}$ and hence $h^{S}=s_{k-1}(\nabla^{k-1}\chi_{n})$. First note that, in general,
\begin{align*}
\frac{1}{2}\mathrm{div}\left(f^{2}\nabla\left|h^{S}\right|^2\right)\leq&f^{2}\left[-\langle\Delta_{\mathrm{Sym}}h^{S}, h^{S}\rangle-\langle\mathfrak{Ric}(h^{S}), h^{S}\rangle+|\nabla h^{S}|^2\right]\\
&+2|f||h^{S}||\langle\nabla f, \nabla |h^{S}|\rangle|.
\end{align*}
Hence, by  Young's and Kato's inequality, we get that, for any $0<\eta<1$, 
\begin{align*}
\frac{1}{2}\mathrm{div}\left(f^{2}\nabla |h^{S}|^2\right)+f^{2}\langle\Delta_{\mathrm{Sym}}h^{S}, h^{S}\rangle\leq&-f^{2}\langle\mathfrak{Ric}(h^{S}), h^{S}\rangle+f^2|\nabla h^{S}|^2\\
&+ \eta f^{2}|\nabla|h^{S}||^2+\frac{1}{\eta}|\nabla f|^2|h^{S}|^2\\
\leq&- f^{2}\langle\mathfrak{Ric}(h^{S}), h^{S}\rangle+(1+\eta)f^2|\nabla h^{S}|^2+\frac{1}{\eta}|\nabla f|^2|h^{S}|^2.
\end{align*}
Integrating, we get that
\begin{align}\label{IneqDeltaSym}
\int_{M}\langle\Delta_{\mathrm{Sym}}h^{S}, f^{2}h^{S}\rangle d\mathrm{vol}_{g}\leq &-\int_{M}f^{2}\langle\mathfrak{Ric}(h^{S}), h^{S}\rangle d\mathrm{vol}_{g}\\
&+(1+\eta)\int_{M}f^{2}|\nabla h^{S}|^2d\mathrm{vol}_{g}+\frac{1}{\eta}\int_{M}|\nabla f|^2|h^{S}|^2d\mathrm{vol}_{g},\nonumber
\end{align}
for any $\eta>0$. Recall that $h^{S}=s_{k-1}(\nabla^{k-1}\chi_{n})$. Note that, by Cauchy-Schwarz inequality,
\begin{equation}\label{ConsCS}
\ |s_{k-1}(\nabla^{k-1}\chi_{n})|^2\leq |\nabla^{k-1}\chi_{n}|^2.
\end{equation}
Hence, by the properties of the cut-off functions $\chi_{n}$, the dominated convergence theorem, and the fact that $f\in W^{k,2}(M)$, we have that
\[
\ \int_{M}|\nabla f|^2|s_{k-1}(\nabla^{k-1}\chi_{n})|^2d\mathrm{vol}_{g}\to 0
\]
as $n\to\infty$. Furthermore, the curvature term in \eqref{IneqDeltaSym} can be controlled, under our assumptions,  using Proposition \ref{ContrCurvTerm}. More precisely we have that
\begin{equation}\label{bd_curv}
-\int_{M}f^{2}\langle\mathfrak{Ric}(h^{S}), h^{S}\rangle d\mathrm{vol}_{g}
\leq (-\alpha)C\int_{M}f^{2}|h^{S}|^2 d\mathrm{vol}_{g}.
\end{equation}

Let us now analyze the LHS of \eqref{IneqDeltaSym}. We let
\begin{align*}
A=& \int_{M}\langle D_{S}^{*}D_{S}(s_{k-1}(\nabla^{k-1}\chi_{n})), f^{2}s_{k-1}(\nabla^{k-1}\chi_{n})\rangle d\mathrm{vol}_{g}\\
B=&\int_{M}\langle D_{S}D_{S}^{*}(s_{k-1}(\nabla^{k-1}\chi_{n})),f^{2}s_{k-1}(\nabla^{k-1}\chi_{n})\rangle d\mathrm{vol}_{g},
\end{align*}
so that
\begin{equation}\label{AB}
\int_{M}\langle\Delta_{\mathrm{Sym}}h^S, f^{2}h^S\rangle d\mathrm{vol}_{g}=A-B.
\end{equation}
First, let us deal with the term $B$. Using \eqref{ConsCS} we get
\begin{align}\label{B}
B=&\int_{M}\langle D_{S}^{*}(s_{k-1}(\nabla^{k-1}\chi_{n})), D_{S}^{*}(f^{2}s_{k-1}(\nabla^{k-1}\chi_{n}))\rangle d\mathrm{vol}_{g}\\
=& \int_{M}\left[f^2|D_{S}^{*}(s_{k-1}(\nabla^{k-1}\chi_{n}))|^2-2f\langle i_{\nabla f}(s_{k-1}(\nabla^{k-1}\chi_{n})), D_{S}^{*}(s_{k-1}(\nabla^{k-1}\chi_{n}))\rangle\right]d\mathrm{vol}_{g}\nonumber\\
\leq &2\int_{M}f^{2}|D_{S}^{*}(s_{k-1}(\nabla^{k-1}\chi_{n}))|^2d\mathrm{vol}_{g}+\int_{M}|\nabla f|^{2}|s_{k-1}(\nabla^{k-1}\chi_{n})|^2d\mathrm{vol}_{g}\nonumber\\
\leq &2\int_{M}f^{2}|D_{S}^{*}(s_{k-1}(\nabla^{k-1}\chi_{n}))|^2d\mathrm{vol}_{g}+\int_{M}|\nabla f|^2|\nabla^{k-1}\chi_{n}|^2d\mathrm{vol}_{g}.\nonumber
\end{align}
\smallskip

\emph{In the following we will use  the "$*$" notation we defined in Section \ref{SectNotations}. Moreover, we will work in a normal orthonormal frame $\left\{E_{i}\right\}$ at $p\in M$,  and in frame computations we will use the convention of lowering all indices, summing over repeated indices.
}
\smallskip

Note that
\begin{equation}\label{DS*}
|D_{S}^{*}(s_{k-1}(\nabla^{k-1}\chi_{n}))|^2=\sum_{i_{2}, \ldots, i_{k-1}}\left[\frac{1}{(k-1)!}\sum_{i_{1}}\nabla_{i_1}\left(\sum_{\pi\in\Pi_{k-1}}\nabla^{k-1}_{i_{\pi(1)}\ldots i_{\pi(k-1)}}\chi_{n}\right)\right]^2.
\end{equation}
Recall that for any $(0,r)$-tensor $\alpha$, with $r\geq 1$, the standard commutation formula gives that
\begin{equation}\label{CommTens}
\ \left(\nabla_{i}\nabla_{j}-\nabla_{j}\nabla_{i}\right)\alpha_{i_{1}\ldots i_{r}}=\mathrm{Riem}*\alpha
\end{equation}
Hence for each of the terms in square parentheses on the RHS of \eqref{DS*} we can trace back to a rough Laplacian of a $(k-2)$-th covariant derivative of $\chi_{n}$. For instance, consider the term $\nabla_{i_{1}}\nabla^{k-1}_{i_{2}\ldots i_{k-1}i_{1}}\chi_{n}$. We can compute (at the point $p\in M$ about which we have selected the normal orthonormal frame):
\begin{align*}
\sum_{i_{1}}\nabla_{i_{1}}\nabla^{k-1}_{i_{2}\ldots i_{k-1}i_{1}}\chi_{n}=&\sum_{i_{1}}\nabla_{i_{1}}\left[\left(\nabla^{k-1}_{i_{2}\ldots i_{k-1}i_{1}}-\nabla^{k-1}_{i_{2}\ldots i_{k-2}i_{1}i_{k-1}}\right)\chi_{n}\right.
\\
&\quad\quad\quad\quad\left.+\left(\nabla^{k-1}_{i_{2}\ldots i_{k-2}i_{1}i_{k-1}}-\nabla^{k-1}_{i_{2}\ldots i_{k-3}i_{1}i_{k-2}i_{k-1}}\right)\chi_{n}+\ldots+\nabla^{k-1}_{i_{1}i_{2}\ldots i_{k-1}}\chi_{n}\right]\\
=&\sum_{i_{1}}\nabla_{i_{1}}\left(\left(\nabla^{k-4}\mathrm{Riem}*\nabla \chi_{n}+\ldots+\mathrm{Riem}*\nabla^{k-3}\chi_{n}\right)_{i_{1}\ldots i_{k-1}}+\nabla^{k-1}_{i_{1}i_{2}\ldots i_{k-1}}\chi_{n}\right)\\
=&\left(\nabla^{k-3}\mathrm{Riem}*\nabla\chi_{n}+\ldots+\mathrm{Riem}*\nabla^{k-2}\chi_{n}\right)_{i_{2}\ldots i_{k-1}}+\Delta\nabla^{k-2}_{i_{2}\ldots i_{k-1}}\chi_{n}.
\end{align*} 
The other terms can be treated similarly. Hence, by \eqref{DS*} and Young's inequality, we get that
\begin{align*}
|D_{S}^{*}(s_{k-1}(\nabla^{k-1}\chi_{n}))|^2\leq C(m,k)\left(|\Delta\nabla^{k-2}\chi_{n}|^2+|\mathrm{Riem}|^2|\nabla^{k-2}\chi_{n}|^2+\ldots+|\nabla^{k-3}\mathrm{Riem}|^2|\nabla\chi_{n}|^2\right),
\end{align*}
where (here and from now on) $C(m,k)$ is a constant, depending only on $k$ and $m$, which can possibly change from line to line. Integrating, we get from \eqref{B} that
\begin{align}\label{B1}
\ B\leq &\int_{M} 2C(m,k)f^2\left(|\Delta\nabla^{k-2}\chi_{n}|^2+|\mathrm{Riem}|^2|\nabla^{k-2}\chi_{n}|^2+\ldots+|\nabla^{k-3}\mathrm{Riem}|^2|\nabla\chi_{n}|^2\right)d\mathrm{vol}_{g} \\&+\int_{M}|\nabla f|^2|\nabla^{k-1}\chi_{n}|^2d\mathrm{vol}_{g}.\nonumber
\end{align}
On the other hand, by Young's inequality and reasoning as in \eqref{ConsCS}, we have that
\begin{align}\label{A1}
A=&\int_{M}\langle D_S(s_{k-1}(\nabla^{k-1}\chi_{n})), D_S(f^2s_{k-1}(\nabla^{k-1}\chi_{n}))\rangle d\mathrm{vol}_{g}\\
=&\int_{M}f^{2}|D_{S}(s_{k-1}(\nabla^{k-1}\chi_{n}))|^2d\mathrm{vol}_{g}\nonumber\\
&+\int_{M}k\langle D_{S}(s_{k-1}(\nabla^{k-1}\chi_{n})), 2fs_{k}\left(df\otimes s_{k-1}(\nabla^{k-1}\chi_{n})\right)\rangle d\mathrm{vol}_{g}\nonumber\\
\geq& (1-\delta)\int_{M}f^{2}|D_{S}(s_{k-1}(\nabla^{k-1}\chi_{n}))|^2d\mathrm{vol}_{g}-\frac{k^2}{\delta}\int_{M}|\nabla f|^2|\nabla^{k-1}\chi_{n}|^2d\mathrm{vol}_{g},\nonumber
\end{align}
for any $\delta>0$.
Substituting \eqref{bd_curv} in \eqref{IneqDeltaSym}, then this latter, \eqref{B1} and \eqref{A1} in \eqref{AB}, we hence get that
\begin{align*}
&(-\alpha)C\int_{M}f^{2}|s_{k-1}(\nabla^{k-1}\chi_{n})|^2 d\mathrm{vol}_{g}+(1+\eta)\int_{M}f^{2}|\nabla(s_{k-1}(\nabla^{k-1}\chi_{n}))|^2d\mathrm{vol}_{g}\\
&+\frac{1}{\eta}\int_{M}|\nabla f|^2|s_{k-1}(\nabla^{k-1}\chi_{n})|^2d\mathrm{vol}_{g}\\
\geq&(1-\delta)\int_{M}f^{2}|D_{S}(s_{k-1}(\nabla^{k-1}\chi_{n}))|^2d\mathrm{vol}_{g}-\frac{k^2}{\delta}\int_{M}|\nabla f|^2|\nabla^{k-1}\chi_{n}|^2d\mathrm{vol}_{g}\\
&-\int_{M} C(m,k)f^2\left(|\Delta\nabla^{k-2}\chi_{n}|^2+|\mathrm{Riem}|^2|\nabla^{k-2}\chi_{n}|^2+\ldots+|\nabla^{k-3}\mathrm{Riem}|^2|\nabla\chi_{n}|^2\right)d\mathrm{vol}_{g} \\
&-\int_{M}|\nabla f|^2|\nabla^{k-1}\chi_{n}|^2d\mathrm{vol}_{g}.
\end{align*}
Using our assumptions, the properties of the rough Laplacian cut-off functions $\chi_{n}$, Proposition \ref{ContrCurvTerm}, the dominated convergence theorem, and the fact that $f\in W^{k,2}(M)$, this yields that
\begin{equation}\label{deltaeta}
\limsup_{n\to \infty}\int_{M}f^{2}\left[(1-\delta)|D_{S}(s_{k-1}(\nabla^{k-1}\chi_{n}))|^2-(1+\eta)|\nabla s_{k-1}(\nabla^{k-1}\chi_{n})|^2\right]d\mathrm{vol}_{g}\le 0.
\end{equation}

\smallskip

Let us first study the term $|\nabla s_{k-1}(\nabla^{k-1}\chi_{n})|^2$. Since, by definition,
\[
s_{k-1}(\nabla^{k-1}\chi_{n})(E_{i_{1}},\ldots,E_{i_{k-1}})=\frac{1}{(k-1)!}\left(\sum_{\pi\in\Pi_{k-1}}\nabla^{k-1}_{i_{\pi(1)}\ldots i_{\pi(k-1)}}\chi_{n}\right),
\]
using \eqref{CommTens}, we have that
\begin{align*}
&((k-1)!)^2|\nabla s_{k-1}(\nabla^{k-1}\chi_{n})|^2\\
=&\sum_{i_{1},\ldots,i_{k}}\left[\sum_{\pi\in\Pi_{k-1}}\nabla^{k}_{i_{k}i_{\pi(1)}\ldots i_{\pi(k-1)}}\chi_{n}\right]^2\\
=&(k-1)!|\nabla^{k}\chi_{n}|^2+\sum_{i_{1},\ldots,i_{k}}\sum_{\pi\in \Pi_{k-1}}(\nabla^{k}_{i_{k}i_{\pi(1)}\ldots i_{\pi(k-1)}}\chi_{n}\cdot\sum_{\sigma\in\Pi_{k-1}\setminus\left\{\pi\right\}}\nabla^{k}_{i_{k}i_{\sigma(1)}\ldots i_{\sigma(k-1)}}\chi_{n})\\
=&\left((k-1)!+((k-1)!-1)(k-1)!\right)|\nabla^{k}\chi_{n}|^2\\
&+\left(\nabla^{k}\chi_{n}*\left(\mathrm{Riem}*\nabla^{k-2}\chi_{n}+\nabla\mathrm{Riem}*\nabla^{k-3}\chi_{n}+\ldots\nabla^{k-3}\mathrm{Riem}*\nabla\chi_{n}\right)\right)\\
=&\left((k-1)!\right)^2|\nabla^{k}\chi_{n}|^2+\left(\nabla^{k}\chi_{n}*\left(\mathrm{Riem}*\nabla^{k-2}\chi_{n}+\nabla\mathrm{Riem}*\nabla^{k-3}\chi_{n}+\ldots\nabla^{k-3}\mathrm{Riem}*\nabla\chi_{n}\right)\right).
\end{align*}
Hence, using Young's inequality,
\begin{align}\label{nablask-1}
|\nabla s_{k-1}(\nabla^{k-1}\chi_{n})|^2\leq & |\nabla^{k}\chi_{n}|^2+C(m,k)\left[\varepsilon|\nabla^{k}\chi_{n}|^2\right.\\
&\left.+\frac{1}{\varepsilon}\left(|\mathrm{Riem}|^2|\nabla^{k-2}\chi_{n}|^2+|\nabla\mathrm{Riem}|^2|\nabla^{k-3}\chi_{n}|^2+\ldots+|\nabla^{k-3}\mathrm{Riem}|^2|\nabla\chi_{n}|^2\right)\right]\nonumber
\end{align}
for any $\varepsilon>0$. On the other hand, concerning the other term in \eqref{deltaeta}, we have that
\begin{align*}
|D_{S}(s_{k-1}(\nabla^{k-1}\chi_{n}))|^2=&k^2|s_{k}(\nabla s_{k-1}(\nabla^{k-1}\chi_{n}))|^2.
\end{align*} 
Since $s_{k}(\nabla s_{k-1}(\nabla^{k-1}\chi_{n}))=s_{k}(\nabla^{k}\chi_{n})$, we thus have that
\begin{align*}
&|D_{S}(s_{k-1}(\nabla^{k-1}\chi_{n}))|^2=k^{2}|s_{k}(\nabla^{k}\chi_{n})|^2\\
=&\frac{k^2}{(k!)^2}\sum_{i_{1},\ldots,i_{k}}\left[\sum_{\pi\in\Pi_{k}}\nabla^{k}_{i_{\pi(1)}\ldots,i_{\pi(k)}}\chi_{n}\right]^2\\
=&\frac{k^2}{(k!)^2}\left[k!|\nabla^{k}\chi_{n}|^2+\sum_{i_{1},\ldots,i_{k}}\sum_{\pi\in \Pi_{k}}(\nabla^{k}_{i_{\pi(1)}\ldots i_{\pi(k)}}\chi_{n}\cdot\sum_{\sigma\in\Pi_{k}\setminus\left\{\pi\right\}}\nabla^{k}_{i_{\sigma(1)}\ldots i_{\sigma(k)}}\chi_{n})\right]\\
=&\frac{k^2}{(k!)^2}\left[\left(k!+(k!-1)k!\right)|\nabla^{k}\chi_{n}|^2\right.\\
&\left.+\left(\nabla^{k}\chi_{n}*\left(\mathrm{Riem}*\nabla^{k-2}\chi_{n}+\nabla\mathrm{Riem}*\nabla^{k-3}\chi_{n}+\ldots\nabla^{k-3}\mathrm{Riem}*\nabla\chi_{n}\right)\right)\right]\\
=&\frac{k^2}{(k!)^2}\left[\left(k!\right)^2|\nabla^{k}\chi_{n}|^2\right.\\
&\left.+\left(\nabla^{k}\chi_{n}*\left(\mathrm{Riem}*\nabla^{k-2}\chi_{n}+\nabla\mathrm{Riem}*\nabla^{k-3}\chi_{n}+\ldots\nabla^{k-3}\mathrm{Riem}*\nabla\chi_{n}\right)\right)\right].
\end{align*}
Using again Young's inequality we get
\begin{align}\label{DSsk-1}
&|D_{S}(s_{k-1}(\nabla^{k-1}\chi_{n}))|^2\geq k^2|\nabla^{k}\chi_{n}|^2+C(m,k)\left[-\varepsilon|\nabla^{k}\chi_{n}|^2\right.\\
&\left.-\frac{1}{\varepsilon}\left(|\mathrm{Riem}|^2|\nabla^{k-2}\chi_{n}|^2+|\nabla\mathrm{Riem}|^2|\nabla^{k-3}\chi_{n}|^2+\ldots+|\nabla^{k-3}\mathrm{Riem}|^2|\nabla \chi_{n}|^2\right)\right],\nonumber
\end{align}
for any $\varepsilon>0$. Using \eqref{nablask-1} and \eqref{DSsk-1}, we get that
\begin{align*}
&\int_{M}f^{2}\left[(1-\delta)\left(k^2-C(m,k)\varepsilon\right)-(1+\eta)\left(1+C(m,k)\varepsilon\right)\right]|\nabla^{k}\chi_{n}|^2d\mathrm{vol}_{g}\\
\leq&\int_{M}f^{2}\left[(1-\delta)|D_{S}(s_{k-1}(\nabla^{k-1}\chi_{n}))|^2-(1+\eta)|\nabla s_{k-1}(\nabla^{k-1}\chi_{n})|^2\right]d\mathrm{vol}_{g}\\
&+ \int_{M}f^{2}\left[(1-\delta)C(m,k)\frac{1}{\varepsilon}\left(|\mathrm{Riem}|^2|\nabla^{k-2}\chi_{n}|^2+\ldots+|\nabla^{k-3}\mathrm{Riem}|^2|\nabla \chi_{n}|^2\right)\right]d\mathrm{vol}_{g}\\
&+\int_{M}f^{2}\left[(1+\eta)C(m,k)\frac{1}{\varepsilon}\left(|\mathrm{Riem}|^2|\nabla^{k-2}\chi_{n}|^2+\ldots+|\nabla^{k-3}\mathrm{Riem}|^2|\nabla \chi_{n}|^2\right)\right]d\mathrm{vol}_{g}.
\end{align*}
By  \eqref{deltaeta} and reasoning as above we know that the RHS converge to $0$ as $n\to\infty$. Hence, suitably choosing $\varepsilon,\,\eta,\,\delta\ll 1$ we finally obtain the desired conclusion:
\[
\ \int_{M}f^{2}|\nabla^{k}\chi_{n}|^2d\mathrm{vol}_{g}\to 0,
\]
as $n\to\infty$.

\section{Some sharp applications}\label{SectSharpAppl}
\subsection{Disturbed Sobolev inequalities}

First, we point out the following

\begin{theorem}\label{th_sob}
Let $(M^m,g)$ be a smooth, complete non-compact Riemannian manifold without boundary. Let $o\in M$, $r(x)\doteq \mathrm{dist}_{g}(x,o)$ and suppose that for some $\eta>0$, $D>0$ and some $i_{0}>0$,
\begin{equation*}
\ |\mathrm{Ric}_{g}|(x)\leq D^2(1+r(x)^2)^{\eta},\quad\mathrm{inj}_g(x)\geq \frac{i_{0}}{D(1+r(x))^\eta}.
\end{equation*}
Let $p\in[1,m)$ and $q\in [p,mp/(m-p)]$. Then there exist constants $A_1>0$, $A_2>0$, depending on $m$, $p$, $q$ and the constant $C$ from Theorem \ref{th_distancefunction}, such that for all $\varphi\in C^\infty_c(M)$ it holds
\begin{align}\label{w-sob}
\left(\int_M |\varphi|^{q}d\mathrm{vol}_{g}\right)^{\frac{1}{q}}
&\leq A_1 \left(\int_M |\nabla \varphi|^p d\mathrm{vol}_{g}\right)^{1/p} + A_2 \left(\int_M H^{2\eta} |\varphi|^p d\mathrm{vol}_{g}\right)^{1/p},
\end{align}
where $H\in C^\infty(M)$ is the distance-like function given by Theorem \ref{th_distancefunction}.
\end{theorem}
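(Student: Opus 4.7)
The plan is to combine Theorem~\ref{th_distancefunction} applied with $k=2$ and the growth function $\lambda(t)\doteq D(1+t^2)^{\eta/2}$ (which is admissible by Lemma~\ref{l: lambda}(i)) with a localization argument adapted to the varying local geometry. Theorem~\ref{th_distancefunction} produces a distance-like $H\in C^\infty(M)$ satisfying $|\nabla H|\leq 1$ and $|\nabla^2 H|\lesssim\lambda(r)\sim H^\eta$ outside a compact set; the correct local length-scale at a point $x$ is then $\rho(x)\doteq c/\lambda(r(x))$, which simultaneously controls the harmonic radius and the local curvature scale at $x$.

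First I would construct, via a Vitali-type argument adapted to the slowly-varying radius $\rho$, a covering of $M$ by geodesic balls $B_i\doteq B^g_{\rho_i}(x_i)$ with $\rho_i\sim 1/\lambda_i$, $\lambda_i\doteq\lambda(r(x_i))$, of uniformly bounded multiplicity $N=N(m)$, together with a subordinate partition of unity $\{\psi_i\}\subset C_c^\infty(M)$ satisfying $|\nabla\psi_i|\lesssim\lambda_i$. Under the curvature and injectivity hypotheses the rescaled metric $\lambda_i^2 g$ on $B_i$ has $|\mathrm{Ric}|\lesssim 1$ and injectivity radius $\gtrsim i_0$, so Proposition~\ref{HarmRadEst} yields a harmonic chart in which $\lambda_i^2 g$ is uniformly $C^{1,\alpha}$-equivalent to the Euclidean metric, with constants independent of $i$.

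Next I would transfer the classical Euclidean Sobolev inequality through each such harmonic chart and rescale back to $g$. With $\alpha\doteq m/p-m/q\in[0,1]$ (which lies in $[0,1]$ precisely because $p\leq q\leq mp/(m-p)$), a direct scaling computation yields, for every $f\in C_c^\infty(B_i)$,
\begin{equation*}
\|f\|_{L^q(B_i,g)}\leq K_1\lambda_i^{\alpha-1}\|\nabla f\|_{L^p(B_i,g)}+K_2\lambda_i^{\alpha}\|f\|_{L^p(B_i,g)},
\end{equation*}
with constants depending only on $m,p,q,\eta,D,i_0$. Given $\varphi\in C_c^\infty(M)$, I would apply this estimate to $f_i\doteq\psi_i\varphi$, use the bound $|\nabla f_i|\leq|\nabla\varphi|+C\lambda_i|\varphi|$ on $B_i$, raise to the $q$-th power, and sum using the bounded-multiplicity inequality $\|\sum_i f_i\|_{L^q}^q\leq N^{q-1}\sum_i\|f_i\|_{L^q}^q$. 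Converting the resulting discrete sums back into integrals on $M$ by the same bounded-multiplicity property and using that $\lambda_i\sim H(x)^\eta$ uniformly on $B_i$ should then absorb the $\lambda_i$-weights into the prescribed weight $H^{2\eta}$ on the right-hand side and produce \eqref{w-sob}.

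The hard part will be the last summation step, where $p$ and $q$ may differ: passing from a discrete quantity such as $\sum_i\lambda_i^{\alpha q}\|\varphi\|_{L^p(B_i)}^q$ to a single expression of the form $\bigl(\int_M H^{2\eta}|\varphi|^p\,d\mathrm{vol}_g\bigr)^{q/p}$ requires a careful combination of Hölder's inequality, the $\ell^p\hookrightarrow\ell^q$ embedding (available since $q\geq p$), and the bounded multiplicity of the cover, while also consistently identifying the correct power of $\lambda\sim H^\eta$ so as to produce the weight $H^{2\eta}$ uniformly across the full range $p\in[1,m)$, $q\in[p,mp/(m-p)]$.
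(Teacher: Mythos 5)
Your overall strategy (cover $M$ by balls of radius $\sim 1/\lambda(r)$, pass to harmonic coordinates via Proposition \ref{HarmRadEst} after rescaling, transplant the Euclidean Sobolev inequality, and sum over a bounded-multiplicity partition of unity) is a legitimate route; it is essentially the localization underlying Hebey's uniform Sobolev inequality carried out directly at varying scales. The paper itself gives no proof here: Remark \ref{rmketa} defers to \cite{IRV-HessCutOff}, where the inequality is obtained by the conformal-change device used in the proof of Theorem \ref{th_CZp_dist} (deform to $\tilde g = H^{2\eta}g$, check that $\tilde g$ has bounded Ricci curvature and non-collapsed unit balls using $|\nabla H|\leq 1$ and $|\mathrm{Hess}\,H|\lesssim\lambda$, apply the standard Sobolev inequality on $(M,\tilde g)$, and undo the conformal factor). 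So your route is genuinely different, and notably it never uses the Hessian bound on $H$ --- only $H\sim r$ --- whereas the conformal proof needs $|\mathrm{Hess}\,H|\lesssim \lambda$ to control $\mathrm{Ric}_{\tilde g}$.

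The genuine gap sits exactly where you flag ``the hard part'': the weight your argument produces is not $H^{2\eta}$. Your local inequality reads $\|\psi_i\varphi\|_{L^q(B_i)}\leq K\lambda_i^{\alpha-1}\|\nabla\varphi\|_{L^p(B_i)}+K\lambda_i^{\alpha}\|\varphi\|_{L^p(B_i)}$ (the factor $\lambda_i^{\alpha}$ coming from $\rho_i^{1-\alpha}|\nabla\psi_i|\sim\rho_i^{-\alpha}$), and after the $\ell^p\hookrightarrow\ell^q$ summation the zeroth-order term becomes $\bigl(\int_M \lambda(r)^{\alpha p}|\varphi|^p\,d\mathrm{vol}_g\bigr)^{1/p}$ with $\alpha p=m(1-p/q)$, i.e.\ a weight comparable to $H^{\eta\, m(1-p/q)}$. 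This is dominated by the prescribed $H^{2\eta}$ only when $m(1-p/q)\leq 2$; at the critical exponent $q=mp/(m-p)$ this means $p\leq 2$. For $p>2$ and $q$ near $mp/(m-p)$ your argument yields a strictly weaker inequality (weight $H^{p\eta}$), and no rearrangement of H\"older or multiplicity estimates in the summation can improve the exponent, because it is already forced by scaling on a single ball (test on a bump supported in one $B_i$). The same exponent $H^{\eta\, m(1-p/q)}$ comes out of the conformal-change computation as well, so before considering the proof complete you must either restrict to the range $m(1-p/q)\leq 2$ or reconcile your exponent with the one claimed in the statement.
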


\begin{remark}\label{rmketa}{\rm Theorem \ref{th_sob} was proved in \cite{IRV-HessCutOff} with $\eta\leq 1$. However the proof therein works also when $\eta>1$ up to replace \cite[Theorem 1.5]{IRV-HessCutOff} with the $k=2$ case of Theorem \ref{th_distancefunction} above, with $\lambda(t)=D^2(1+t^2)^{\frac{\eta}{2}}$. Note that one could also state the theorem for more general growth functions $\lambda(r)$ as in Theorem \ref{th_distancefunction}.}
\end{remark} 

Since, under a $C^k$-control on the curvature, there exists distance-like functions with controlled higher order derivatives, one naturally expects that some sort of improved higher order Sobolev inequality should be obtained exploiting the control on the higher derivatives of the curvature. However, for the moment, this possible phenomenon remains unclear to us. Indeed, generalizing a fact remarked in \cite[Proposition 2.11]{Aubin} for the standard (i.e. non-disturbed) Sobolev inequalities, higher order disturbed Sobolev inequalities hold true under exactly the same assumptions as Theorem \ref{th_sob}.

\begin{proposition}\label{th_sob_HO}
Let $(M^m,g)$ be a smooth, complete non-compact Riemannian manifold without boundary. Let $o\in M$, $r(x)\doteq \mathrm{dist}_{g}(x,o)$ and suppose that for some $\eta>0$, $D>0$ and some $i_{0}>0$,
\begin{equation*}
\ |\mathrm{Ric}_{g}|(x)\leq D^2(1+r(x)^2)^{\eta},\quad\mathrm{inj}_g(x)\geq \frac{i_{0}}{D(1+r(x))^\eta}.
\end{equation*}
Let $p\in[1,m)$ and let $k$ be an integer in $[1,\frac mp)$. Then there exists a constant $A>0$ depending on $m$, $p$, $k$ and the constant $C$ from Theorem \ref{th_distancefunction}, such that for all $\varphi\in C^\infty_c(M)$ it holds
\begin{align}\label{w-sob-ho}
\|\varphi\|_{L^{\frac{pm}{m-kp}}(M)}\leq A \sum_{r=-1}^{k-1}\left\| H^{\frac\eta{mp}(r+1)(2m-rp)}|\nabla^{k-r-1}\varphi|\right \|_{L^p(M)},
\end{align}
where $H\in C^\infty(M)$ is the distance-like function given by Theorem \ref{th_distancefunction}.
\end{proposition}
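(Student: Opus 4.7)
The plan is to iterate the first-order disturbed Sobolev inequality of Theorem \ref{th_sob} along the Sobolev scale $p_j \doteq mp/(m-jp)$, $j = 0, \ldots, k$. Then $p_0 = p$, $p_k = mp/(m-kp)$, and each $p_j \in [1, m)$ since $kp < m$. Applied at exponent $p_j$ with $q = p_{j+1}$, Theorem \ref{th_sob} gives the building block
\[
\|u\|_{L^{p_{j+1}}} \leq A\|\nabla u\|_{L^{p_j}} + A\|H^{2\eta/p_j}u\|_{L^{p_j}}, \qquad u\in C^\infty_c(M).
\]
Up to replacing $H$ by $H+1$ (which preserves the bounds of Theorem \ref{th_distancefunction} with adjusted constants), we may assume $H\geq 1$ on $M$. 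The Leibniz rule combined with $|\nabla H|\leq 1$ and $H\geq 1$ then yields, for every $\gamma\geq 0$,
\[
|\nabla(H^\gamma u)| \leq \gamma H^{\gamma}|u| + H^\gamma|\nabla u|.
\]

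Starting from $\|\varphi\|_{L^{p_k}}$, I would iterate the building block $k$ times, at the successive levels $p_{k-1}, p_{k-2}, \ldots, p_0$. At an intermediate step, a term $\|H^\gamma v\|_{L^{p_{j+1}}}$ with $v = |\nabla^s \varphi|$ (treated as a scalar via Kato's inequality $|\nabla|\nabla^s\varphi|| \leq |\nabla^{s+1}\varphi|$, after a standard smoothing $\sqrt{|\nabla^s\varphi|^2+\epsilon^2}$) would be bounded via the building block and the Leibniz estimate by
\[
C_\gamma \|H^{\gamma + 2\eta/p_j} v\|_{L^{p_j}} + A\|H^\gamma |\nabla v|\|_{L^{p_j}},
\]
where $H \geq 1$ is used to absorb the residual $\|H^\gamma v\|_{L^{p_j}}$ into the first, weighted term. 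Each iteration thus branches into two options: either increase the weight by $2\eta/p_j$ (keeping the derivative order fixed) or increase the derivative order by one (keeping the weight fixed), while lowering the integration exponent from $p_{j+1}$ down to $p_j$.

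After $k$ iterations the bound reduces to a finite sum of terms of the form $\|H^{\alpha_{s,T}}|\nabla^s\varphi|\|_{L^p}$, indexed by the final derivative order $s \in \{0, \ldots, k\}$ and the set $T \subset \{0, \ldots, k-1\}$ of the $k-s$ levels at which the weight was incremented, with $\alpha_{s,T} = \sum_{j \in T}2\eta/p_j$. Using $H\geq 1$ once more, I would dominate each such term by the one with the largest weight for the given $s$, realized by $T = \{0, 1, \ldots, k-s-1\}$ (the smallest $p_j$'s yield the largest $2\eta/p_j$'s), giving the dominating exponent
\[
\alpha_s \doteq \sum_{j=0}^{k-s-1}\frac{2\eta}{p_j} = \frac{2\eta}{mp}\sum_{j=0}^{k-s-1}(m-jp) = \frac{\eta(k-s)(2m-(k-s-1)p)}{mp}.
\]
Reindexing by $r = k-s-1$ recovers exactly the weight $\eta(r+1)(2m-rp)/(mp)$ paired with $|\nabla^{k-r-1}\varphi|$ appearing in \eqref{w-sob-ho}, so summing over $s$ (equivalently $r$) completes the proof.

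The hard part will be the combinatorial bookkeeping of the many intermediate terms generated by the binary branching at each of the $k$ iterations, and the repeated use of $H\geq 1$ and $|\nabla H|\leq 1$ to collapse all intermediate weight configurations into the single maximal exponent $\alpha_s$ for each fixed derivative order. A more minor technical point is the justification of Kato's inequality so that Theorem \ref{th_sob} can be applied to $|\nabla^s\varphi|$, which is standard via smoothing and a limit argument.
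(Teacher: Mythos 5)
Your proposal is correct and follows essentially the same route as the paper: iterating the first-order disturbed Sobolev inequality of Theorem \ref{th_sob} along the scale $p_j=mp/(m-jp)$, using the Leibniz rule with $|\nabla H|\leq 1$ and Kato's inequality, and collapsing the resulting weights via monotonicity of $\gamma\mapsto H^\gamma$ to the maximal exponent $\eta(r+1)(2m-rp)/(mp)$. The only difference is bookkeeping: the paper runs a backward induction that absorbs the smaller-weight terms into the canonical larger-weight term at each level (so only $O(k)$ terms are carried), whereas you let the $2^k$ branches accumulate and collapse them at the end — both are valid, and your explicit normalization $H\mapsto H+1$ to ensure $H\geq 1$ is a point the paper leaves implicit.
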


\begin{proof}
For the ease of notation we will write $\|\cdot\|_{p}$ for $\|\cdot\|_{L^p(M)}$. For $j=1,\dots,k$, define $q_j\doteq pm/(m-jp)$.
We prove that for every $s=0,\dots,k-1$ it holds
\begin{align}\label{w-sob-hoind}
\|\varphi\|_{q_k}\leq C \sum_{r=s-1}^{k-1}\left\| H^{2\eta\sum_{j=s}^r q_j^{-1}}|\nabla^{k-r-1}\varphi|\right \|_{q_s},
\end{align}
with the convention that $\sum_{j=0}^{-1} q_j^{-1}=0$. Since $\sum_{j=0}^r q_j^{-1}=(2m-rp)(r+1)/(2mp)$, the inequality \eqref{w-sob-ho} is equivalent to \eqref{w-sob-hoind} when $s=0$.

Note that $p=q_0<q_1<\cdots <q_k$, and that $q_j=nq_{j-1}/(n-q_{j-1})$, so that Theorem \ref{th_sob} applies with $p=q_{j-1}$ and $q=q_j$. In particular, \eqref{w-sob-hoind} holds true when $s=k-1$. For general $s$, we prove now the validity of \eqref{w-sob-hoind} by backward induction. Namely, suppose that for some integer $t\in [1, k-1]$, \eqref{w-sob-hoind} holds true with $s=t$, i.e.
\begin{align*}
\|\varphi\|_{q_k}\leq C  \sum_{r=t-1}^{k-1}\left\| H^{2\eta\sum_{j=t}^r q_j^{-1}}|\nabla^{k-r-1}\varphi|\right \|_{q_t}.
\end{align*}
Applying \eqref{w-sob} with $p=q_{t-1}$ and $q=q_t$ at each terms of RHS gives 
\begin{align*}
\|\varphi\|_{q_k}&\leq C \left[\sum_{r=t-1}^{k-1}\left\| H^{\frac{2\eta}{q_{t-1}}}H^{2\eta\sum_{j=t}^r q_j^{-1}}|\nabla^{k-r-1}\varphi|\right \|_{q_{t-1}} +\sum_{r=t-1}^{k-1}\left\| \nabla\left(H^{2\eta\sum_{j=t}^r q_j^{-1}}|\nabla^{k-r-1}\varphi|\right)\right \|_{q_{t-1}}\right]\\
&\leq C \left[ \sum_{r=t-1}^{k-1}\left\| H^{2\eta\sum_{j=t-1}^r q_j^{-1}}|\nabla^{k-r-1}\varphi|\right \|_{q_{t-1}} + 
\sum_{r=t-1}^{k-1}\left\| \left(H^{2\eta\sum_{j=t}^r q_j^{-1}-1}|\nabla H||\nabla^{k-r-1}\varphi|\right)\right \|_{q_{t-1}}\right.\\
&+ \left.\sum_{r=t-1}^{k-1}\left\| H^{2\eta\sum_{j=t}^r q_j^{-1}}\nabla|\nabla^{k-r-1}\varphi|\right \|_{q_{t-1}}
\right].
\end{align*}
Recalling that $|\nabla H|\leq 1$, and that $|\nabla|\nabla^{k-r-1}\varphi||\leq |\nabla^{k-r}\varphi|$ (see \cite[p. 36, (1)]{Aubin}), we obtain
\begin{align*}
\|\varphi\|_{q_k}
&\leq C \left[ \sum_{r=t-1}^{k-1}\left\| H^{2\eta\sum_{j=t-1}^r q_j^{-1}}|\nabla^{k-r-1}\varphi|\right \|_{q_{t-1}} + 
\sum_{r=t-1}^{k-1}\left\| \left(H^{2\eta\sum_{j=t}^r q_j^{-1}}|\nabla^{k-r-1}\varphi|\right)\right \|_{q_{t-1}}\right.\\
&+ \left.\sum_{r=t-1}^{k-1}\left\| H^{2\eta\sum_{j=t}^r q_j^{-1}}|\nabla^{k-r}\varphi|\right \|_{q_{t-1}}
\right]\\
&\leq C \left[ \sum_{r=t-1}^{k-1}\left\| H^{2\eta\sum_{j=t-1}^r q_j^{-1}}|\nabla^{k-r-1}\varphi|\right \|_{q_{t-1}} + \sum_{r=t-2}^{k-2}\left\| H^{2\eta\sum_{j=t}^{r+1} q_j^{-1}}|\nabla^{k-r-1}\varphi|\right \|_{q_{t-1}}
\right].
\end{align*}
Since $\sum_{j=t}^{r+1} q_j^{-1}\leq\sum_{j=t-1}^{r} q_j^{-1}$, we finally get
\begin{align*}
\|\varphi\|_{q_k}\leq C  \sum_{r=t-2}^{k-1}\left\| H^{2\eta\sum_{j=t-1}^r q_j^{-1}}|\nabla^{k-r-1}\varphi|\right \|_{q_{t-1}},
\end{align*}
i.e. \eqref{w-sob-hoind} holds true for $s=t-1$ as desired.
\end{proof}

\subsection{Calder\'on-Zygmund inequalities}
Calder\'on-Zygmund inequalities are a powerful tool in Euclidean analysis which permits to control the $L^p$-norm of the Hessian of a function $u$ in terms of the $L^p$-norms of the Laplacian of $u$ and of $u$ itself. On a complete non-compact Riemannian manifolds $(M,g)$, it was proved by B. G\"uneysu and S. Pigola in \cite{GuneysuPigola} that the global Calder\'on-Zygmund inequality
\begin{equation}\label{CZp}\tag{CZ(p)}
\forall u \in C^\infty_c(M),\quad \||\mathrm{Hess}\,u|_{g}\|_{L^p}^p \leq  A_1 \|u\|_{L^p}^p +A_2 \|\Delta u\|_{L^p}^p
\end{equation}
holds for $p\in[1,+\infty)$ if either 
\begin{itemize}
\item $p=2$ and $\mathrm{Ric}_g\geq -C$ for some $C>0$, or
\item $|\mathrm{Ric}_g| \leq C$ for some $C>0$ and $\mathrm{inj}_g(M)>i_0>0$.
\end{itemize}
In general there is no hope to get \eqref{CZp} on an arbitrary complete non-compact manifold, due to counterexamples, \cite{GuneysuPigola,Shi}. However one can weaken the assumptions in \cite{GuneysuPigola} obtaining a weaker version of \eqref{CZp} in which an unbounded weight function appears in the $\|u\|_{L^p}$ term of \eqref{CZp}. This approach was considered for instance in \cite{IRV-HessCutOff}, where the authors proved (a slightly weaker version of) the following

\begin{theorem}\label{th_CZ_dist}
Let $(M^m,g)$ be a smooth, complete non-compact Riemannian manifold without boundary. Let $o\in M$, $r(x)\doteq \mathrm{dist}_{g}(x,o)$ and suppose  that one of the following curvature assumptions holds
\begin{itemize}
\item[(a)] for some $\eta>0$, some $D>0$ and some $i_0>0$,
\[
\ |\mathrm{Ric}_{g}|(x)\leq D^2(1+r(x)^2)^{\eta},\quad\mathrm{inj}_{g}(x)\geq \frac{i_0}{D(1+r(x))^{\eta}}>0\quad\mathrm{on}\,\,M.
\]
\item[(b)] for some $\eta>0$ and some $D>0$,
\[
\ |\mathrm{Sect}_{g}|(x)\leq D^2(1+r(x)^2)^\eta.
\]
\end{itemize}
Then there exist constants $A_1>0$, $A_2>0$, depending on $m$, $\eta$, $D$ and the constant $C$ from Theorem \ref{th_distancefunction}, such that for all $\varphi\in C^\infty_c(M)$ it holds
\begin{align}\label{CZ-state}
\||\mathrm{Hess}\,\varphi|_g\|_{L^2}^2 \leq  A_1 \|H^{2\eta}\varphi\|_{L^2}^2 +A_2 \|\Delta \varphi\|_{L^2}^2,
\end{align}
where $H\in C^\infty(M)$ is the distance-like function given by Theorem \ref{th_distancefunction}.
\end{theorem}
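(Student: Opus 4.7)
The plan is to localize the inequality to $g$-balls of point-dependent radius $\sim 1/\lambda(r(x))$, where $\lambda(t):=D(1+t^2)^{\eta/2}$, by means of a constant conformal rescaling $\tilde g_x:=\lambda(r(x))^2 g$ which turns each such ball into a uniformly bounded-geometry region in $\tilde g_x$, and then to reassemble the local estimates by summing over a locally finite cover of bounded overlap. Since $|\mathrm{Ric}_g|\leq(m-1)|\mathrm{Sect}_g|\leq(m-1)D^2(1+r^2)^{\eta}$ and $\lambda$ easily satisfies assumptions (A1)--(A4($j$)), case~(a) of Theorem~\ref{th_distancefunction} applies and produces the distance-like function $H$ appearing in the statement.

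Fix $x\in M$ and set $\lambda_x:=\lambda(r(x))$. Under the constant rescaling $\tilde g_x=\lambda_x^2 g$, sectional curvatures satisfy $|\mathrm{Sect}_{\tilde g_x}|=\lambda_x^{-2}|\mathrm{Sect}_g|\leq C_1$ on the $\tilde g_x$-ball of unit radius around $x$, thanks to the slow variation of $(1+r^2)^{\eta}$ on $g$-balls of $g$-radius $1/\lambda_x$ guaranteed by (A2). The injectivity radius rescales to
\begin{equation*}
\mathrm{inj}_{\tilde g_x}(x)=\lambda_x\,\mathrm{inj}_g(x)\geq \lambda_x\cdot\frac{i_0}{D(1+r(x))^{\eta}}\geq C_2>0,
\end{equation*}
uniformly in $x\in M$. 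This uniform lower bound on $\mathrm{inj}_{\tilde g_x}$, singled out in the excerpt as the main difficulty, is precisely what the matching of polynomial rates in the hypotheses on $|\mathrm{Sect}_g|$ and $\mathrm{inj}_g$ is designed to enforce. Proposition~\ref{HarmRadEst} then provides a centered harmonic chart of uniform $\tilde g_x$-size in which $\tilde g_x$ has uniformly $C^{1,\alpha}$-bounded components and $\Delta_{\tilde g_x}$ takes the no-first-order form $\tilde g_x^{ij}\partial_i\partial_j$ with $C^{0,\alpha}$-bounded principal coefficients. The Euclidean $L^p$ Calder\'on--Zygmund inequality \cite[Theorem 9.11]{GT}, combined with a standard interpolation to absorb first-order discrepancies between the Euclidean Hessian and $|\mathrm{Hess}_{\tilde g_x}\varphi|_{\tilde g_x}$, then yields
\begin{equation*}
\int_{\tilde B_{1/2}(x)}|\mathrm{Hess}_{\tilde g_x}\varphi|_{\tilde g_x}^p\,d\mathrm{vol}_{\tilde g_x}\leq A\int_{\tilde B_1(x)}\bigl(|\varphi|^p+|\Delta_{\tilde g_x}\varphi|^p\bigr)\,d\mathrm{vol}_{\tilde g_x}
\end{equation*}
with $A$ independent of $x$ and of $\varphi\in C^\infty_c(M)$.

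Pulling back to $g$ through the identities $|\mathrm{Hess}_{\tilde g_x}\varphi|_{\tilde g_x}^p\,d\mathrm{vol}_{\tilde g_x}=\lambda_x^{m-2p}|\mathrm{Hess}_g\varphi|_g^p\,d\mathrm{vol}_g$, $|\Delta_{\tilde g_x}\varphi|^p\,d\mathrm{vol}_{\tilde g_x}=\lambda_x^{m-2p}|\Delta_g\varphi|^p\,d\mathrm{vol}_g$ and $|\varphi|^p\,d\mathrm{vol}_{\tilde g_x}=\lambda_x^m|\varphi|^p\,d\mathrm{vol}_g$ yields the localized $g$-inequality
\begin{equation*}
\int_{B_{c/\lambda_x}^g(x)}|\mathrm{Hess}_g\varphi|_g^p\,d\mathrm{vol}_g\leq A\int_{B_{C/\lambda_x}^g(x)}\bigl(\lambda_x^{2p}|\varphi|^p+|\Delta_g\varphi|^p\bigr)\,d\mathrm{vol}_g,
\end{equation*}
with uniform constants. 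Outside a fixed compact neighborhood of $o$ one has $r\leq C^2 H$ and $H\geq 1$, hence $\lambda_x^{2p}=D^{2p}(1+r(x)^2)^{p\eta}\leq C\,H(x)^{2p\eta}$; the contribution from the compact central region, where the metric has uniformly bounded geometry by continuity, is handled separately by the classical $L^p$ Calder\'on--Zygmund inequality of \cite{GuneysuPigola} and absorbed into the RHS. A Vitali-type argument finally produces a locally finite cover $\{B_{c/\lambda_{x_\alpha}}^g(x_\alpha)\}$ of $M$ with multiplicity depending only on $m$, and summing the resulting local estimates concludes the proof. The main obstacle throughout is Step~2: were the injectivity-radius decay any weaker than the stated $(1+r)^{-\eta}$, the rescaled balls would fail to have uniformly bounded geometry, Proposition~\ref{HarmRadEst} would not apply with $x$-independent constants, and the local-to-global procedure would break down.
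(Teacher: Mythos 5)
Your strategy has a genuine gap: it cannot cover case (b) of the theorem. The hypotheses (a) and (b) are alternatives, but your argument uses both at once: you invoke a sectional curvature bound (available only in (b)) to control $|\mathrm{Sect}_{\tilde g_x}|$ after rescaling, and the injectivity radius decay (available only in (a)) to conclude $\mathrm{inj}_{\tilde g_x}(x)\geq C_2>0$. Under (b) there is no injectivity radius hypothesis whatsoever, so the rescaled unit balls need not have uniformly bounded geometry, Proposition \ref{HarmRadEst} does not apply with $x$-independent constants, and the local-to-global scheme breaks down exactly at the point you yourself flag as essential. The local Cartan--Hadamard covering trick used in the proof of Theorem \ref{th_distancefunctionSect} does not rescue this: a surjective local isometry $F:B^{\bar g}_R(\bar x)\to B^g_R(x)$ transfers integral estimates downstairs only up to the (possibly unbounded) multiplicity of $F$, so a Calder\'on--Zygmund estimate on the cover does not descend to an $L^p$ estimate on the ball. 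Under case (a) alone your localization is essentially sound (it is a pointwise-rescaled version of the bounded-geometry argument of \cite{GuneysuPigola}, and would in fact yield the $L^p$ conclusion for all $p$), but that proves only half of the theorem; note also that it makes no essential use of the Hessian control on $H$, which is a sign that it diverges from the intended mechanism.

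The paper's route is different and is tailored to $p=2$: perform a single global conformal change $\tilde g=H^{2\eta}g$, where $H$ is the second-order distance-like function of Theorem \ref{th_distancefunction}; use $|\nabla H|\leq 1$ and $|\mathrm{Hess}\,H|\leq C\max\{\lambda(r),1\}$ to show that $\mathrm{Ric}_{\tilde g}$ is bounded below; apply the $L^2$ Calder\'on--Zygmund inequality of \cite{GuneysuPigola}, which for $p=2$ requires only a lower Ricci bound and no injectivity radius control (it rests on the Bochner formula); and transfer the inequality back to $g$ via the conformal transformation rules for the Hessian and the Laplacian, as carried out in the proof of Theorem \ref{th_CZp_dist}. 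This is precisely why the $L^2$ statement covers case (b), and why the difficulty of controlling $\mathrm{inj}_{\tilde g}$ under the conformal deformation only enters for $p\neq 2$. To repair your proof you would either need to restrict to case (a), or replace the local rescaling by an argument that, like the paper's, exploits the special structure of the $p=2$ case.
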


\begin{remark}{\rm The same observation as in Remark \ref{rmketa} applies also to Theorem \ref{th_CZ_dist}.}
\end{remark} 

As commented in \cite{IRV-HessCutOff}, obtaining a weighted $L^p$ Calder\'on-Zygmund inequality under the same assumptions of Theorem \ref{th_CZ_dist} is a non-trivial problem. The main issue is to keep a control on the injectivity radius under the conformal deformation. However, it turns out that this can be done at least under slightly stronger assumptions, i.e. if we assume both a control on the sectional curvatures and on the injectivity radius. Accordingly, we can obtain Theorem \ref{th_CZp_dist} stated in Section \ref{Intro}, which we state here again for readers' convenience.

\begin{theorem}
Let $(M^m,g)$ be a smooth, complete non-compact Riemannian manifold without boundary. Let $o\in M$, $r(x)\doteq \mathrm{dist}_{g}(x,o)$ and suppose  that  for some $\eta>0$, some $D>0$ and some $i_0>0$,
\[
\ |\mathrm{Sect}_{g}|(x)\leq D^2(1+r(x)^2)^{\eta},\quad\mathrm{inj}_{g}(x)\geq \frac{i_0}{D(1+r(x))^{\eta}}>0\quad\mathrm{on}\,\,M.
\]
Then there exist constants $A>0$ depending on $m$, $\eta$, $D$, $i_0$ and the constant $C$ from Theorem \ref{th_distancefunction}, such that for all $\varphi\in C^\infty_c(M)$ it holds
\begin{align*}
\||\mathrm{Hess}\,\varphi|_g\|_{L^p}^p \leq  A \left[ \|H^{2\eta}\varphi\|_{L^p}^p + \|\Delta \varphi\|_{L^p}^p\right],
\end{align*}
where $H\in C^\infty(M)$ is the distance-like function given by Theorem \ref{th_distancefunction}.
\end{theorem}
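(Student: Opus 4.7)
The strategy follows the standard localization scheme for global $L^p$ Calder\'on-Zygmund inequalities on manifolds, adapted to the unbounded-geometry setting by means of the distance-like function $H$ of Theorem \ref{th_distancefunction}. Take $\lambda(t)=D(1+t^2)^{\eta/2}$, which satisfies (A1)-(A3) and (A4(1)), (A4(2)); Theorem \ref{th_distancefunction}(a) then delivers an $H$ with $H(x)\asymp r(x)$ and $|\nabla H|\leq 1$. The plan is to cover $M$ by balls $B^g_{\beta/\lambda_1(x_i)}(x_i)$ of radius $\sim\lambda(r(x_i))^{-1}$, prove on each such ball a local $L^p$ Calder\'on-Zygmund inequality via a \emph{constant} rescaling of the metric and harmonic coordinates, and sum using a uniform overlap estimate. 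The weight $H^{2\eta}$ in the conclusion will arise from the local rescaling factor.

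\emph{Local inequality.} Fix $x\in M$ with $r(x)$ large and set $\lambda_1=\lambda_1(x)\doteq D(1+r(x))^\eta$. On the ball $B^g_{\beta/\lambda_1}(x)$, with $\beta$ a sufficiently small universal constant, the hypotheses imply, after the constant rescaling $g_\lambda\doteq\lambda_1^2 g$, that $|\mathrm{Sect}_{g_\lambda}|\leq 1$ and $\mathrm{inj}_{g_\lambda}\geq i_0/2$ throughout the ball (arguing as in Subsection \ref{casea}). Proposition \ref{HarmRadEst} then provides harmonic coordinates on a uniform $g_\lambda$-scale in which $g_\lambda$ is $C^{1,\alpha}$-controlled. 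Applying the classical local $L^p$ Calder\'on-Zygmund estimate (\cite[Theorem 9.11]{GT}) to the uniformly elliptic operator $\Delta_{g_\lambda}$ in those coordinates, and noting that $\nabla^2_{g_\lambda}$ differs from the Euclidean Hessian $D^2$ only by terms involving the uniformly bounded Christoffel symbols, one obtains
\begin{equation*}
\int_{B^{g_\lambda}_{\beta/2}(x)}|\mathrm{Hess}_{g_\lambda}\varphi|^p_{g_\lambda}\,d\mathrm{vol}_{g_\lambda}\leq C\int_{B^{g_\lambda}_{\beta}(x)}\bigl[|\varphi|^p+|\Delta_{g_\lambda}\varphi|^p\bigr]\,d\mathrm{vol}_{g_\lambda}
\end{equation*}
with $C$ independent of $x$. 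Since the rescaling is constant, $\mathrm{Hess}_{g_\lambda}\varphi=\mathrm{Hess}_g\varphi$ and $\Delta_{g_\lambda}\varphi=\lambda_1^{-2}\Delta_g\varphi$ as operators, while pointwise $|\mathrm{Hess}_g\varphi|_{g_\lambda}=\lambda_1^{-2}|\mathrm{Hess}_g\varphi|_g$ and $d\mathrm{vol}_{g_\lambda}=\lambda_1^m\,d\mathrm{vol}_g$. Scaling back and dividing out the common factor $\lambda_1^{m-2p}$ converts the local estimate into
\begin{equation*}
\int_{B^{g}_{\beta/(2\lambda_1)}(x)}|\mathrm{Hess}_g\varphi|_g^p\,d\mathrm{vol}_g\leq C\int_{B^{g}_{\beta/\lambda_1}(x)}\bigl[\lambda_1^{2p}|\varphi|^p+|\Delta_g\varphi|^p\bigr]d\mathrm{vol}_g,
\end{equation*}
and on this patch one has $\lambda_1^{2p}\leq C H^{2p\eta}$: indeed $r(y)\asymp r(x)$ for $y\in B^g_{\beta/\lambda_1}(x)$, and $H(y)\asymp r(y)$ by the distance-like property of $H$.

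\emph{Global inequality.} A Vitali-type argument produces a countable family of balls $\{B^g_{\beta/(2\lambda_1(x_i))}(x_i)\}$ covering $M$ whose enlargements $\{B^g_{\beta/\lambda_1(x_i)}(x_i)\}$ have uniformly bounded overlap. The overlap bound follows from the fact that after the local constant rescaling each enlarged ball has uniformly bounded geometry, so the Bishop-Gromov comparison gives a uniform upper bound on the number of family members that can meet a single point. Summing the local inequalities, and absorbing the compact-region contribution into the constant, yields the global estimate of the statement.

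\emph{Main obstacle.} As pointed out in the introduction of the paper, the crux is controlling the injectivity radius of the rescaled metric on a whole neighborhood of $x$, and not merely at $x$. The pointwise hypothesis $\mathrm{inj}_g(y)\geq i_0/(D(1+r(y))^\eta)$ degrades as $r(y)$ grows, so $\beta$ must be chosen small enough that $r(y)\asymp r(x)$ on $B^g_{\beta/\lambda_1}(x)$; the sectional curvature bound is then needed to upgrade the pointwise injectivity radius estimate to a uniform one on the whole patch, allowing Proposition \ref{HarmRadEst} to be invoked with constants independent of $x$. This is precisely where both the sectional curvature \emph{and} the injectivity radius hypotheses are essential, explaining why no $L^p$-analogue of Theorem \ref{th_CZ_dist}(b), which dispenses with the injectivity radius assumption thanks to the global Bochner identity available only in the $L^2$-case, is reached by this argument.
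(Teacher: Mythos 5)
Your plan is correct in outline, but it follows a genuinely different route from the paper's. The paper does not localize at all: it performs a \emph{global} conformal deformation $\tilde g=H^{2s}g$, shows $|\mathrm{Sect}_{\tilde g}|\leq K^2$, and then applies the global Calder\'on--Zygmund inequality of G\"uneysu--Pigola to $(M,\tilde g)$ before translating back via $u=H^{-\eta(\frac mp-2)}\varphi$ and an interpolation inequality for the gradient term. In that approach the whole difficulty — and the reason both the sectional curvature and the injectivity radius hypotheses appear — is to prove $\mathrm{inj}_{\tilde g}(M)\geq\tilde i_0>0$ globally; the paper does this by a contradiction argument, lifting short $\tilde g$-geodesic loops through the $\tilde g$-exponential map (Rauch comparison, using the sectional bound) and deriving two distinct relative homotopy classes of paths inside a $g$-ball that is contractible by the hypothesis on $\mathrm{inj}_g$. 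Your direct localization with constant rescalings sidesteps this entirely, since a constant rescaling multiplies the injectivity radius by the same constant; what you must supply instead are the pieces you only sketch: the Vitali-type covering at the variable scale $\lambda_1(x)^{-1}$ with uniformly bounded overlap (standard, since $\lambda_1$ varies multiplicatively slowly on each patch and Bishop--Gromov applies at the rescaled scale), the local interpolation $\|\nabla\varphi\|_{L^p}\lesssim\|\varphi\|_{L^p}+\|D^2\varphi\|_{L^p}$ needed to absorb the Christoffel-symbol terms when passing between the Euclidean and Riemannian Hessians (the paper needs the analogous step via \cite[Proposition 3.10 a)]{GuneysuPigola}), and the observation that $H$ can be taken strictly positive so the compact region can indeed be absorbed. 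A notable payoff of your route, worth making explicit if you write it up: since Proposition \ref{HarmRadEst} and the overlap count only require a Ricci bound at the rescaled scale, your argument appears to yield the $L^p$ inequality already under the weaker hypotheses of Theorem \ref{th_CZ_dist}(a) (Ricci plus injectivity radius decay), whereas the paper's conformal method genuinely needs the sectional curvature bound for the Rauch/lifting step.
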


\begin{proof}
We can write the geometric assumption in the following form:  for some $\eta>0$, some $D'>0$ and some $i'_0>0$, 
\[
\ |\mathrm{Sect}_{g}|(x)\leq \bar D^2r(x)^{2\eta},\quad\mathrm{inj}_{g}(x)\geq \frac{\bar i_0}{r(x)^{\eta}}>0\quad\mathrm{on}\,\,M\setminus B_1(o).
\]

As in the proof of \cite[Theorem 1.7]{IRV-HessCutOff}, define the new complete conformal metric $\tilde g=H^{2s}g$, which satisfies in particular
\begin{equation}\label{sect_bd}
|\mathrm{Sect}_{\tilde g}|\leq K^2
\end{equation}
on $M$ for some constant $K>0$. Here $H$ is the second order distance-like function whose existence is guaranteed in our assumptions by Theorem \ref{th_distancefunction}.
For later purposes, observe that reasoning as in the proof of \cite[Lemmas 7.3 and 7.4]{IRV-HessCutOff} one can prove the existence of a constant $\tilde c$ such that for all $j$ large enough and all $\rho>0$, 
\[B_{\tilde c^{-1} \rho r(x_j)^{-\eta}}^g(x_j)\subset B^{\tilde g}_\rho(x_j)\subset B_{\tilde c\rho r^{-\eta}(r_j)}^g(x_j).\]

We claim that 
\begin{equation}\label{claim}
\exists\, \tilde i_0>0,\quad \forall x\in M,\quad \mathrm{inj}_{\tilde g}(x)\geq \tilde  i_0>0.\end{equation} 
Suppose it is not the case. Then there exists a sequence of points $\{x_j\}_{j=1}^{\infty}\subset M\setminus B_1^g(0)$ such that $\mathrm{inj}_{\tilde g}(x_j)< \frac{1}{j}$ and $\mathrm{dist}_{\tilde g}(x_j,o)\to\infty$ as $j\to\infty$. 

We call $F_j:\mathbb B_{\frac{\pi}{2K}}(0)\subset T_{x_j}M\to M$ the exponential map $\exp_{x_j}$ of the metric $\tilde g$ restricted to $\mathbb B_{\frac{\pi}{2K}}(0)\subset T_{x_j}M$. Since $\mathrm{Sect}_{\tilde g}\leq K^2$, by Rauch comparison theorem in $B^{\tilde g}_{\frac{\pi}{2K}}(x_j)$ there are no points conjugated to $x_j$ for the metric $\tilde g$. In particular $F_j$ is a smooth local diffeomorphism onto $B^{\tilde g}_{\frac{\pi}{2K}}(x_j)$ and a smooth surjective local isometry once $T_{x_j}M$ is endowed with the pulled-back metric $\exp_{x_j}^{\ast} \tilde g$.
Hence, $\mathrm{inj}_{\tilde g}(x_j)< \frac{1}{j}$ implies that for $j>\frac{2K}{\pi}$, there exists a point $y_j\in B^{\tilde g}_{j^{-1}}(x_j)$ such that $y_j=F(Y_{j,1})=F(Y_{j,2})$ for two different points $Y_{j,1}$ and $Y_{j,2}$ in $\mathbb B_{j^{-1}}(0)\subset T_{x_j}M$. Also, there are two different constant speed minimizing geodesics $\tilde\gamma_{j,1}(t)$ and $\tilde\gamma_{j,1}(t)$ wrt $\tilde g$ which connect $x_j$ to $y_j$ and lifts to $\tilde\Gamma_{j,1}(t)=tY_{j,1}$ and $\tilde\Gamma_{j,2}(t)=tY_{j,2}$ respectively. 

Let $j>\frac{2\tilde c^2K}{\pi}$, so that 
\begin{equation}\label{ballschain}
B^{\tilde g}_{j^{-1}}(x_j)\subset B_{\tilde cj^{-1} r^{-\eta}(r_j)}^g(x_j)\subset B^{\tilde g}_{\tilde c^2j^{-1}}(x_j)\subset B^{\tilde g}_{\frac{2K}{\pi}}(x_j).\end{equation}
Since $F$ is a local isometry, $F^{-1}(y_j)$ is a discrete set, so that $\tilde\gamma_{j,1}$ and $\tilde\gamma_{j,2}$ belong necessarily to two different relative homotopy classes $[\tilde\gamma_{j,1}]_{\{x_j,y_j\}}$ and $[\tilde\gamma_{j,2}]_{\{x_j,y_j\}}$ of paths in $B^{\tilde g}_{\tilde c^2j^{-1}}(x_j)$ with fixed boundary points $x_j$ and $y_j$. 
Because of \eqref{ballschain} we have also that $\gamma_{j,1}$ and $\gamma_{j,2}$ are curves contained in $B_{\tilde cj^{-1} r^{-\eta}(r_j)}^g(x_j)$ which belong to two different relative homotopy classes $[\tilde\gamma_{j,1}]_{\{x_j,y_j\}}$ and $[\tilde\gamma_{j,2}]_{\{x_j,y_j\}}$ of paths in $B_{\tilde cj^{-1} r^{-\eta}(x_j)}^g(x_j)$ with fixed boundary points $x_j$ and $y_j$. 
However, 
\[
\mathrm{inj}_{g}(x_j)\geq \frac{\bar i_0}{r(x_j)^{\eta}}>\tilde cj^{-1} r^{-\eta}(x_j),
\]
as soon as $j>\tilde c/i_0$. Hence, for $j$ large enough, $B_{\tilde cj^{-1} r^{-\eta}(x_j)}^g(x_j)$ is contractible, which contradicts the existence of two different homotopy classes of curves. The claim \eqref{claim} is thus proved.

Recalling also \eqref{sect_bd}, we have thus the validity on $(M,\tilde g)$ of an $L^p$ Calder\'on-Zygmund inequality, \cite[Theorem C]{GuneysuPigola}, i.e.,
\begin{align}\label{CZtilde}
\forall u \in C^\infty_c(M),\quad \||\mathrm{Hess}_{\tilde{g}}\,u|_{\tilde g}\|_{\widetilde{L^p}}^p \leq  A_1 \|u\|_{\widetilde{L^p}}^p +A_2 \|\Delta_{\tilde{g}} u\|_{\widetilde{L^p}}^p.
\end{align}
Here and on $\|\cdot\|_{\widetilde{L^p}}$ is the $L^p$ norm of a function computed with respect to the conformally deformed metric $\tilde g$ on $M$. 
Given $\varphi\in C^\infty_c(M)$, define $u\in C^\infty_c(M)$ by $H^{\eta(\frac{m}{p}-2)}u=\varphi$. Setting $e^\phi=H^\eta$ and computing as in \cite[Section 7]{IRV-HessCutOff},we get that
\begin{align*}
e^{4\phi}|\mathrm{Hess}_{\tilde{g}}\,u|_\tg^2 
&= |\mathrm{Hess}\,u|_g^2+2 |\nabla u|_g^2|\nabla \phi|_g^2 +(m-2)g(\nabla u,\nabla\phi)^2 - 2g(\nabla u,\nabla\phi)\Delta_gu - 4\mathrm{Hess}\,u(\nabla u,\nabla \phi)\\
&\geq |\mathrm{Hess}\,u|_g^2+2 |\nabla u|_g^2|\nabla \phi|_g^2 -(m-2)|\nabla u|_g^2|\nabla\phi|_g^2 - |\nabla u|_g^2|\nabla\phi|_g^2 - |\Delta_gu|^2\\ 
&- \frac{1}{2} |\mathrm{Hess}\,u|_g^2 - 8 |\nabla u|_g^2|\nabla\phi|_g^2\\
&\geq \frac{1}{2} |\mathrm{Hess}\,u|_g^2 - (m+5)|\nabla u|_g^2|\nabla\phi|_g^2- |\Delta_gu|^2. 
\end{align*}
Accordingly
\begin{align*}
|\mathrm{Hess}\,u|_g^2
&\leq C \left[H^{4\eta}|\mathrm{Hess}_{\tilde{g}}\,u|_\tg^2 
+ \eta^2|\nabla u|_g^2|\nabla\log H|_g^2+ |\Delta_gu|^2\right],
\end{align*}
and
\begin{align}\label{eqhess}
H^{(m-2 p)\eta}|\mathrm{Hess}\,u|_g^p\, d\mathrm{vol}_g
\leq& C \left[|\mathrm{Hess}_{\tilde{g}}\,u|_\tg^p 
\,d\mathrm{vol}_{\tilde g}\right.\\
&\,\,\,\,\,\,\left.+ H^{(m-2 p)\eta}|\nabla u|_g^p|\nabla\log H|_g^p \,d\mathrm{vol}_g+ H^{(m-2 p)\eta}|\Delta_gu|^p\, d\mathrm{vol}_g\right].\nonumber
\end{align}
From \eqref{CZtilde} we get
\begin{align*}
\int_M |\mathrm{Hess}_{\tilde{g}}\,u|_\tg^p 
\,d\mathrm{vol}_{\tilde g} 
&\leq A_1 \int_M |u|^p 
\,d\mathrm{vol}_{\tilde g} + A_2\int_M |\Delta_{\tilde{g}} u|^p 
\,d\mathrm{vol}_{\tilde g}\\
&\leq C\left[ 
\int_M |u|^p 
\,d\mathrm{vol}_{\tilde g} + \int_M H^{-2\eta p}|\Delta_g u + (m-2) g(\nabla u, \eta\nabla \log H)|^p 
\,d\mathrm{vol}_{\tilde g}
\right]\\
& \leq C\left[ 
\int_M H^{m\eta}|u|^p 
\,d\mathrm{vol}_{g} + \int_M H^{(m-2p)\eta}|\Delta_g u|^p 
\,d\mathrm{vol}_{g}\right.\\
&\,\,\,\,\,\,\,\,\,\,\,\,\,\,\left. + \int_M H^{(m-2p)\eta}|\nabla u|_g^p|\nabla \log H|_g^p 
\,d\mathrm{vol}_{g}
\right].
\end{align*}
Inserting in \eqref{eqhess} gives
\begin{align}\label{eqhess2}
\int_M H^{(m-2 p)\eta}|\mathrm{Hess}\,u|_g^p\, d\mathrm{vol}_g
 \leq &C\left[ 
\int_M H^{m\eta}|u|^p 
\,d\mathrm{vol}_{g} + \int_M H^{(m-2p)\eta}|\Delta_g u|^p 
\,d\mathrm{vol}_{g}\right.\\
 &\,\,\,\,\,\,\,+ \left.\int_M H^{(m-2p)\eta}|\nabla u|_g^p|\nabla \log H|_g^p 
\,d\mathrm{vol}_{g}
\right].\nonumber
\end{align}
Recall that $u=H^{-\eta(\frac{m}{p}-2)}\varphi$. Then 
\[
H^{\eta(\frac{m}{p}-2)}|\nabla_g u| \leq C [|\nabla_g\varphi| + H^{-1}\varphi|\nabla_g H| ],
\]
\[
H^{\eta(\frac{m}{p}-2)}|\Delta_g u| \leq |\Delta_g \varphi| + C [H^{-1}|\nabla_g\varphi||\nabla_g H| + H^{-2}\varphi |\Delta_g H|],
\]
and 
\[
H^{\eta(\frac{m}{p}-2)}|\mathrm{Hess}\, u| \geq |\mathrm{Hess}\, \varphi| - C [H^{-1}|\nabla_g\varphi||\nabla_g H| + H^{-2}\varphi |\mathrm{Hess}\, H|].
\]
Combining these latter with \eqref{eqhess2} we obtain
\begin{align*}
\int_M |\mathrm{Hess}\,\varphi|_g^p\, d\mathrm{vol}_g
 \leq &C\left[ 
\int_M H^{2p\eta}|\varphi|^p 
\,d\mathrm{vol}_{g} + \int_M |\Delta_g \varphi|^p 
\,d\mathrm{vol}_{g}\right.\\
 &+ \left.\int_M H^{-p}|\nabla \varphi|_g^p|\nabla H|_g^p 
\,d\mathrm{vol}_{g} + \int_M H^{-2p}|\varphi|^p|\mathrm{Hess}\, H|_g^p 
\,d\mathrm{vol}_{g} 
\right]\nonumber\\
\leq &C\left[ 
\int_M H^{2p\eta}|\varphi|^p 
\,d\mathrm{vol}_{g} + \int_M |\Delta_g \varphi|^p 
\,d\mathrm{vol}_{g}
+ \int_M |\nabla \varphi|_g^p 
\,d\mathrm{vol}_{g} 
\right].\nonumber
\end{align*}
where we used the fact that $H$ is a strictly positive exhaustion function, $|\nabla H|$ is uniformly bounded and $|\mathrm{Hess}\, H|\leq C H^\eta$. Applying \cite[Proposition 3.10 a)]{GuneysuPigola} with $\varepsilon$ small enough, we finally get 
\begin{align*}
\int_M |\mathrm{Hess}\,\varphi|_g^p\, d\mathrm{vol}_g
\leq C\left[ 
\int_M H^{2p\eta}|\varphi|^p 
\,d\mathrm{vol}_{g} + \int_M |\Delta_g \varphi|^p 
\,d\mathrm{vol}_{g}
\right]\nonumber
\end{align*}
as desired.
\end{proof}
\subsection{Omori-Yau maximum principle}\label{subsec_OY}
We end this section with a remark on the Omori-Yau maximum principle. Recall that a Riemannian manifold $(M, g)$ is said to satisfy the full Omori-Yau maximum principle for the Hessian if for any function $u\in C^{2}(M)$ with $u^{*}=\sup_{M}u<+\infty$, there exists a sequence $\left\{x_{n}\right\}_{n}\subset M$ with the properties
\[
\ \mathrm{(i)}\,u(x_{n})>u^{*}-\frac{1}{n},\quad\mathrm{(ii)}\,|\nabla u (x_{n})|<\frac{1}{n},\quad\mathrm{(iii)}\,\mathrm{Hess}(u)(x_{n})<\frac{1}{n}g,
\]
for each $n\in\mathbb{N}$. In \cite{PRS_Mem} it was proved that
the full Omori-Yau maximum principle for the Hessian holds e.g. if the radial sectional curvature of $M$ (i.e. the sectional curvature of $2$-planes containing $\nabla r$), satisfies
\[
\ \mathrm{Sect}_{\mathrm{rad}}\geq -C^2(1+r^{2})\prod_{j=1}^{\bar{j}}\left(\ln^{[j]}(r)\right)^2,
\]
where $\ln^{[j]}$ stands for the $j$-th iterated logarithm. In \cite{IRV-HessCutOff} we proved that the same is true if
\begin{equation*}
\left|\mathrm{Ric}_{g}\right|(x)\leq D^2(1+r(x)^2),\quad\mathrm{inj}_{g}(x)\geq \frac{i_0}{D(1+r(x))}>0\quad\mathrm{on}\,\,M.
\end{equation*}
Using Theorem \ref{th_distancefunction} and reasoning as in \cite{IRV-HessCutOff}, we get that assuming
\begin{equation*}
\left|\mathrm{Ric}_{g}\right|(x)\leq C^2r^{2}\prod_{j=1}^{\bar{j}}\left(\ln^{[j]}(r)\right)^2,\quad\mathrm{inj}_{g}(x)\geq \frac{i_0}{r\prod_{j=1}^{\bar{j}}\ln^{[j]}(r)}
\end{equation*}
outside a compact set of $M$ is enough.
\begin{appendix}
\section{Some commutation formulas}\label{App_comm}

Using again the  ``$*$'' notation defined in Section \ref{SectNotations}, we have the validity of the following

\begin{lemma}\label{Comm}
Let $u\in C^{\infty}(M)$. 
Then
\[
\ \Delta\nabla^{q-2} u-\nabla^{q-2}\Delta u
= \mathrm{Riem}*\nabla^{q-2}u
+\nabla \mathrm{Ric}*\nabla^{q-3} u+\ldots\nabla^{q-3}\mathrm{Ric}*\nabla u
\]
\end{lemma}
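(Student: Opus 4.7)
My plan is to argue by induction on $q\geq 2$, based on a one-step commutation identity between a single covariant derivative and the rough Laplacian on tensor fields. The base case $q=2$ is the trivial identity $\Delta u-\Delta u=0$ (the right-hand side being an empty sum). For the inductive step, I would split
\[
\nabla^{q-2}\Delta u - \Delta\nabla^{q-2}u = \nabla[\nabla^{q-3}\Delta u - \Delta\nabla^{q-3}u] + [\nabla\Delta\nabla^{q-3}u - \Delta\nabla^{q-2}u],
\]
so that the inductive hypothesis immediately handles the first bracket while only a one-step commutation is needed for the second.

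The key preliminary identity I would first establish is that, for any smooth $(0,s)$-tensor field $T$,
\[
\nabla\Delta T - \Delta\nabla T = \mathrm{Riem}*\nabla T + \nabla\mathrm{Ric}*T.
\]
Its proof uses the standard commutator formula $[\nabla_a,\nabla_b]S=\mathrm{Riem}*S$ twice inside $g^{ij}(\nabla_a\nabla_i\nabla_j-\nabla_i\nabla_j\nabla_a)T$: the first swap of $\nabla_a$ past $\nabla_i$ contributes a $\mathrm{Riem}*\nabla T$ term, while the second swap produces $g^{ij}\nabla_i([\nabla_a,\nabla_j]T)$; the trace $g^{ij}$ acting on the differentiated Riemann tensor appearing there collapses into a $\nabla\mathrm{Ric}*T$ contribution after invoking the antisymmetry of $\mathrm{Riem}$ in its first pair of indices together with the contracted second Bianchi identity.

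With this one-step formula in hand, the induction closes quickly. Applying it to $T=\nabla^{q-3}u$ gives the second bracket exactly the schematic form $\mathrm{Riem}*\nabla^{q-2}u+\nabla\mathrm{Ric}*\nabla^{q-3}u$, while covariant differentiation of a generic inductive term $\nabla^j\mathrm{Ric}*\nabla^{q-3-j}u$ produces $\nabla^{j+1}\mathrm{Ric}*\nabla^{q-3-j}u+\nabla^j\mathrm{Ric}*\nabla^{q-2-j}u$, both of which are absorbed by the $*$-notation into the claimed sum; the $\mathrm{Riem}*\nabla^{q-3}u$ term from the inductive hypothesis becomes $\nabla\mathrm{Riem}*\nabla^{q-3}u+\mathrm{Riem}*\nabla^{q-2}u$, whose first summand is of the Bianchi-reducible type discussed below.

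The main obstacle I foresee is bookkeeping with the $*$-notation: one must verify that every contribution in which Riemann appears differentiated and subsequently contracted can be rewritten in the form $\nabla^\ell\mathrm{Ric}$ rather than $\nabla^\ell\mathrm{Riem}$, as the statement requires on every term except the leading one. This conversion is always available through repeated use of the contracted second Bianchi identity, but it requires a bit of care to track at each step of the induction; modulo this, the argument is entirely routine.
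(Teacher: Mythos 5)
Your decomposition and your one‑step identity are both correct, and it is worth noting why the one‑step identity is clean: the differentiated Riemann tensor it produces is $g^{ij}\nabla_i R_{aj\cdot\cdot}$, a \emph{divergence}, so the contracted second Bianchi identity genuinely converts it into $\nabla\mathrm{Ric}$. Up to repackaging, this is the same mechanism as the paper's proof, which telescopes all $q-2$ derivatives past the Laplacian in a normal frame instead of inducting; the curvature terms isolated are the same.

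The gap is precisely at the step you defer as ``bookkeeping'', and I do not see how to close it. In your inductive step the outer $\nabla$ falls, among other places, on the Riemann factor of the term $\mathrm{Riem}*\nabla^{q-3}u$ inherited from the inductive hypothesis. Already for $q=5$: the level‑$4$ identity contains the summand $-2R_{pbct}\nabla^p\nabla^t u$ (this is the standard commutator of $\Delta$ with the Hessian), and differentiating it yields $-2(\nabla_a R_{pbct})\nabla^p\nabla^t u$. Here the derivative index $a$ is free and the two contracted slots of $\mathrm{Riem}$ are paired with $\nabla^2 u$, not traced against each other nor against the new derivative; the second Bianchi identity $\nabla_aR_{pbct}+\nabla_pR_{bact}+\nabla_bR_{apct}=0$ merely permutes which of $a,p,b$ carries the derivative and never produces a divergence of $\mathrm{Riem}$. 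On a Ricci‑flat manifold with $\nabla\mathrm{Riem}\neq 0$ this term is a generically nonvanishing contraction of $\nabla\mathrm{Weyl}$ with the Hessian, while every right‑hand side term of the claimed identity involving a differentiated curvature vanishes, and it cannot be hidden in $\mathrm{Riem}*\nabla^{3}u$ since it is of lower order in $u$. So for $q\geq 5$ your induction produces (and the commutator genuinely contains) terms $\nabla^{j}\mathrm{Riem}*\nabla^{q-2-j}u$, $1\leq j\leq q-4$, that are not of the form $\nabla^{j}\mathrm{Ric}*\nabla^{q-2-j}u$; only the extreme terms ($j=0$, and $j=q-3$, whose curvature factor arises solely from traces and divergences of $\mathrm{Riem}$) reduce as claimed. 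To be fair, the paper's own sketch meets exactly the same summand ($\nabla_p\nabla_t u\,\nabla_{i_5}R_{ti_1i_2p}$ in its notation) and likewise invokes ``Bianchi's identities'' without detail, so your route is no worse; but the conversion you both assert is the actual content of the lemma, and as stated it fails. Your induction does close verbatim if the intermediate terms are stated with $\nabla^{j}\mathrm{Riem}$ in place of $\nabla^{j}\mathrm{Ric}$.
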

\begin{proof}[Proof. (Sketch)]
	We work in a normal frame $\left\{E_{i}\right\}$ orthonormal at $p\in M$,  and in frame computations we will use the convention of lowering all indices, summing over repeated indices. Recall that we are adopting the following sign conventions for curvatures:
\begin{align*}
&\mathrm{R}(X,Y)=\nabla_{X}\nabla_{Y}-\nabla_{Y}\nabla_{X}-\nabla_{[X,Y]};\\
&\mathrm{Riem}(X,Y,Z,W)=g(\mathrm{R}(X,Y)Z,W)\\
&\mathrm{Ric}(X,Y)=\mathrm{tr}\,\mathrm{Riem}(X,\cdot, \cdot, Y).
\end{align*}
Moreover, for the ease of notation we will write in coordinates $\RR_{ijkl}$ for $\mathrm{Riem}_{ijkl}$. Letting $u\in C^{\infty}(M)$, recall the following commutation rules 
\begin{align}
&\left(\nabla_{i}\nabla_{j}-\nabla_{j}\nabla_{i}\right)u=0,\label{(2)}\\
&\left(\nabla_{k}\nabla_{j}-\nabla_{j}\nabla_{k}\right)\nabla_{i}u=-\nabla_{t}u\RR_{tijk}\label{(3)}.
\end{align}
More generally one can compute that, for any $l\geq 2$,
\begin{align}\label{(s)}\tag{s}
\left(\nabla_{i_{l}}\nabla_{i_{l-1}}-\nabla_{i_{l-1}}\nabla_{i_{l}}\right)\nabla_{i_{l-2}}\ldots\nabla_{i_{1}}u=&-\nabla_{i_{l-2}}\ldots\nabla_{i_{2}}\nabla_{t}u
\RR_{ti_{1}i_{l-1}i_{l}}-\nabla_{i_{l-2}}\ldots\nabla_{i_{3}}\nabla_{t}\nabla_{i_{1}}u
\RR_{ti_{2}i_{l-1}i_{l}}\\
&-\ldots-\nabla_{t}\nabla_{i_{l-3}}\nabla_{i_{1}}u\RR_{ti_{l-2}i_{l-1}i_{l}}.\nonumber
\end{align}
If one is interested in the commutation rule for the $(s-1)$-th and the $s$-th derivative of $u$ of a total of $q$ derivatives, it suffices to take $\nabla^{q-s}$ of formula  \eqref{(s)}. 
When studying a term like $\Delta\nabla^{q-2}u-\nabla^{q-2}\Delta u$, one hence realizes that the higher order derivatives of curvature terms arise from commutators of the $2$-nd and the $3$-rd (of the total $q$) derivatives. There are two such terms in the telescopic development of $\Delta\nabla^{q-2}u-\nabla^{q-2}\Delta u$, namely
\begin{align*}
&\nabla^{q-5}_{i_{q}\ldots i_{6}}\nabla_{i_{5}}\nabla_{p}\left[\left(\nabla_{p}\nabla_{i_{2}}\nabla_{i_{1}}-\nabla_{i_{2}}\nabla_{p}\nabla_{i_{1}}\right)u\right]\\=&\nabla^{q-5}_{i_{q}\ldots i_{6}}\nabla_{i_{5}}\nabla_{p}\left[-\nabla_{t}u\RR_{ti_{1}i_{2}p}\right]\\
=&-\nabla^{q-5}_{i_{q}\ldots i_{6}}\left[\nabla_{i_{5}}\nabla_{p}\nabla_{t}u\RR_{ti_{1}i_{2}p}+\nabla_{p}\nabla_{t}u\nabla_{i_5}\RR_{ti_{1}i_{2}p}+\nabla_{i_{5}}\nabla_{t}u\nabla_{i_5}\nabla_{p}\RR_{ti_{1}i_{2}p}+\nabla_{t}u\nabla_{i_5}\nabla_{p}\RR_{ti_{1}i_{2}p}\right],
\end{align*}
and
\begin{equation*}
\nabla^{q-5}_{i_{q}\ldots i_{6}}\nabla_{i_{5}}\nabla_{i_{2}}\left[\left(\nabla_{p}\nabla_{i_{1}}\nabla_{p}-\nabla_{i_{1}}\nabla_{p}\nabla_{p}\right)u\right]=\nabla^{q-5}_{i_{q}\ldots i_{6}}\nabla_{i_{5}}\nabla_{i_{2}}\left[-\nabla_{t}u\RR_{tpi_{1}p}\right]
\end{equation*}
Using Bianchi's identities each of these terms can be written in the form 
\[
\mathrm{Riem}*\nabla^{q-2}u+\nabla\mathrm{Ric}*\nabla^{q-3} u+\ldots+\nabla^{q-3}\mathrm{Ric}*\nabla u.
\]
\end{proof}
\section{Weitzenb\"ock formula for $\Delta_{\mathrm{Sym}}$}\label{AppB}

Given $h$ a totally symmetric $(0,k-1)$ tensor, $\left\{E_{i}\right\}$ a local orthonormal frame on $M$, recall that 
\begin{align*}
(D_{S}h)(X_{1},X_{2},\ldots, X_{k})=&ks_{k}(\nabla h)(X_{1}, X_{2} \ldots, X_{k})=\frac{1}{(k-1)!}\left(\sum_{\sigma\in \Pi_{k}}(\nabla h)(X_{\sigma(1)},\ldots,X_{\sigma(k)}),\right)\\
(D_{S}^{*}h)(X_{1},\ldots,X_{k-2})=&-\sum_{i}(\nabla_{E_{i}}h)(E_{i},X_{1},\ldots,X_{k-2}).
\end{align*}

For the sake of completeness we provide here a proof of the following
\begin{lemma}[\cite{Sampson}]
Letting $\Delta_{\mathrm{Sym}}=D_{S}^{*}D_{S}-D_{S}D_{S}^{*}$, we have that
\begin{align}\label{WeitSampsApp}
\ \Delta_{\mathrm{Sym}}h&=\Delta_{B} h-\tilde{\mathfrak{R}}^{(k-1)}(h)+\tilde{\mathfrak{S}}^{(k-1)}(h)\\
&=\Delta_{B} h-\mathfrak{Ric}(h),\nonumber
\end{align}
where
\begin{align*}
 \tilde{\mathfrak{R}}^{(k-1)}(h)(X_{1},\ldots,X_{k-1})\doteq&\sum_{i=1}^{k-1}\sum_{j} h(X_{1},\ldots,E_{j},\ldots,X_{k-1})\mathrm{Ric}(X_{i},E_{j});\\
\tilde{\mathfrak{S}}^{(k-1)}(h)(X_{1},\ldots, X_{k-1})\doteq&\sum_{p<i}\sum_{j,l}h(X_{1},\ldots,E_{l},\ldots,E_{j},\ldots,X_{k-1})\cdot\\
&\cdot\left(\mathrm{Riem}( E_{l},X_{p},X_{i},E_{j})+\mathrm{Riem}(E_{l},X_{i},X_{p},E_{j})\right),
\end{align*}
the definition of $\mathfrak{Ric}$ has been recalled in \eqref{fkric}, and $\Delta_{B}=-\mathrm{tr}_{12}(\nabla^{2})=\nabla^{*}\nabla$, with $\nabla^{*}$ the formal $L^{2}$-adjoint of $\nabla$.
\end{lemma}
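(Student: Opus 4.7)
The plan is to compute $D_S^*D_S h$ and $D_S D_S^* h$ separately in components, subtract them to isolate a rough Laplacian plus a sum of commutators of covariant derivatives, and then read off the curvature contributions via the Ricci identity, exploiting the total symmetry of $h$ to assemble the result into $-\tilde{\mathfrak{R}}^{(k-1)}(h)+\tilde{\mathfrak{S}}^{(k-1)}(h)$.

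\medskip

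Fix $p\in M$ and a frame $\{E_i\}$ normal and orthonormal at $p$. Because $h$ is totally symmetric, the $(k-1)!$ permutations in $D_S h=k\,s_k(\nabla h)$ collapse into $k$ summands, giving the component expressions
\begin{align*}
(D_S h)_{i_0\cdots i_{k-1}}=\sum_{q=0}^{k-1}\nabla_{i_q}h_{i_0\cdots\widehat{i_q}\cdots i_{k-1}},\qquad (D_S^*h)_{i_1\cdots i_{k-2}}=-\sum_a\nabla_a h_{a\,i_1\cdots i_{k-2}}.
\end{align*}
Composing these, the diagonal piece $-\sum_a\nabla_a\nabla_a h_{i_1\cdots i_{k-1}}=(\Delta_B h)_{i_1\cdots i_{k-1}}$ appears only in $D_S^* D_S h$, so taking the difference yields
\begin{align*}
(\Delta_{\mathrm{Sym}}h)_{i_1\cdots i_{k-1}}=(\Delta_B h)_{i_1\cdots i_{k-1}}-\sum_{q=1}^{k-1}\sum_a\bigl(\nabla_a\nabla_{i_q}-\nabla_{i_q}\nabla_a\bigr)h_{a,i_1,\ldots,\widehat{i_q},\ldots,i_{k-1}}.
\end{align*}

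\medskip

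Next, I apply the Ricci identity $\bigl([\nabla_a,\nabla_{i_q}]T\bigr)_{c_1\cdots c_{k-1}}=-\sum_{s=1}^{k-1}\sum_l\mathrm{Riem}(E_a,E_{i_q},E_{c_s},E_l)\,T_{c_1,\ldots,l,\ldots,c_{k-1}}$ to $h$ with $c_q=a$ and $c_s=i_s$ for $s\neq q$, splitting the output according to the value of $s$. The diagonal case $s=q$ produces the contraction $\sum_a\mathrm{Riem}(E_a,E_{i_q},E_a,E_l)=-\mathrm{Ric}(E_{i_q},E_l)$ (by antisymmetry in the first pair together with the sign convention $\mathrm{Ric}(X,Y)=\sum_j\mathrm{Riem}(X,E_j,E_j,Y)$); summed over $q$, this piece is exactly $\tilde{\mathfrak{R}}^{(k-1)}(h)$. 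The off-diagonal cases $s\neq q$ give pure Riemann terms; splitting $\sum_{s\neq q}=\sum_{s<q}+\sum_{s>q}$ and relabelling the second piece via the swap $(q,s,a,l)\mapsto(s,q,l,a)$---admissible precisely because $h$ is totally symmetric---combines the two Riemann components into the pair appearing in $\tilde{\mathfrak{S}}^{(k-1)}(h)$, up to one last $j\leftrightarrow l$ exchange that uses the pair-swap and antisymmetries of $\mathrm{Riem}$ to bring the ordering of factors into exact agreement. This yields the first equality in \eqref{WeitSampsApp}.

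\medskip

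For the identification $\mathfrak{Ric}(h)=\tilde{\mathfrak{R}}^{(k-1)}(h)-\tilde{\mathfrak{S}}^{(k-1)}(h)$, I invoke the decomposition of $\mathfrak{Ric}$ already obtained in Subsection~\ref{subsec_Weitz}: interpreting $R(E_j,X_i)$ as a derivation on tensors, the slot $p=i$ produces the trace $\sum_j R(E_j,X_i)E_j=-\sum_\nu\mathrm{Ric}(X_i,E_\nu)E_\nu$ and hence contributes $+\tilde{\mathfrak{R}}^{(k-1)}(h)$, while the slots $p\neq i$ produce the pure Riemann combination that, by the same symmetry manipulations as in the previous step, is exactly $-\tilde{\mathfrak{S}}^{(k-1)}(h)$. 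The main obstacle is the combinatorial bookkeeping: the raw Ricci-identity output is a non-symmetric sum of Riemann components which only matches the precisely ordered form of $\tilde{\mathfrak{S}}^{(k-1)}$ after carefully applying the total symmetry of $h$ together with the pair-swap symmetry $\mathrm{Riem}(A,B,C,D)=\mathrm{Riem}(C,D,A,B)$ and the antisymmetries in each pair. No deep idea is involved, but indices and signs must be tracked with care.
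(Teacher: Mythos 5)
Your proposal is correct and follows essentially the same route as the paper: compute $D_S^*D_Sh$ and $D_SD_S^*h$ in a normal frame using the total symmetry of $h$ to collapse the symmetrizations, isolate $\Delta_B h$ plus commutators, apply the Ricci identity to split the diagonal (Ricci-contraction) terms from the off-diagonal (Riemann) terms, and identify the result with $-\tilde{\mathfrak{R}}^{(k-1)}(h)+\tilde{\mathfrak{S}}^{(k-1)}(h)$ and then with $-\mathfrak{Ric}(h)$ via the expansion of the Weitzenb\"ock curvature operator given in Subsection~\ref{subsec_Weitz}. The only caution is the sign bookkeeping you yourself flag: with the paper's conventions the diagonal piece enters the formula as $-\tilde{\mathfrak{R}}^{(k-1)}(h)$, which your chain of minus signs does produce once assembled carefully.
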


\begin{proof}
Assume that we perform computation at a point $p\in M$ in a normal frame, so that all covariant derivatives of vector fields vanish at $p$. We have that
\begin{align*}
&(D_{S}^{*}D_{S}h)(X_{1},\ldots, X_{k-1})=-\sum_{i}(\nabla_{E_{i}}D_{S}h)(E_{i},X_{1},\ldots, X_{k-1})\\
=&-\frac{1}{(k-1)!}\sum_{i}\nabla_{E_{i}}\left((k-1)!\nabla_{E_{i}}h(X_{1},\ldots,X_{k-1})+(k-1)!\nabla_{X_{1}}h(E_{i},X_{2},\ldots, X_{k-1})+\ldots\right.\\
&\left.\,\,\quad\quad\quad\quad\quad\quad\quad\quad+(k-1)!\nabla_{X_{k-1}}h(E_{i},X_{1},\ldots, X_{k-2})\right)\\
=&-\sum_{i}\left(\nabla_{E_{i}}\nabla_{E_{i}}h(X_{1},\ldots, X_{k-1})+\nabla_{E_{i}}\nabla_{X_{1}}h(E_{i}, X_{2}, \ldots, X_{k-1})\right.\\
&\quad\quad\quad\left.+\ldots+\nabla_{E_{i}}\nabla_{X_{k-1}}h(E_{i}, X_{1},\ldots, X_{k-2})\right).
\end{align*}
While,
\begin{align*}
&(D_{S}D_{S}^{*}h)(X_{1}, \ldots, X_{k-1})=(k-1)s_{k-1}(\nabla D_{S}^{*}h)(X_{1},\ldots, X_{k-1})\\
=&\frac{1}{(k-2)!}\left((k-2)!\nabla_{X_{1}}D_{S}^{*}h(X_{2}, \ldots, X_{k-1})+\ldots+ (k-2)!\nabla_{X_{k-1}}D_{S}^{*}h(X_{1},\ldots, X_{k-2})\right)\\
=&-\sum_{i}\left(\nabla_{X_{1}}\nabla_{E_{i}}h(E_{i}, X_{2},\ldots, X_{k-1})+\ldots+\nabla_{X_{k-1}}\nabla_{E_{i}}h(E_{i}, X_{1},\ldots, X_{k-2})\right).
\end{align*}
Hence
\begin{align*}
&\Delta_{\mathrm{Sym}}h=(D_{S}^{*}D_{S}-D_{S}D_{S}^{*})h\\
=&\Delta_{B}h+\sum_{i}\left(\nabla_{X_{1}}\nabla_{E_{i}}h(E_{i},X_{2},\ldots, X_{k-1})-\nabla_{E_{i}}\nabla_{X_{1}}h(E_{i},X_{2},\ldots, X_{k-1})\right.\\
&\left.\quad\quad\quad\quad\quad+\nabla_{X_{2}}\nabla_{E_{i}}h(E_{i}, X_{1},X_{3},\ldots, X_{k-1})-\nabla_{E_{i}}\nabla_{X_{2}}h(E_{i}, X_{1},X_{3},\ldots, X_{k-1})+\ldots\right.\\
&\left.\quad\quad\quad\quad\quad+\nabla_{X_{k-1}}\nabla_{E_{i}}h(E_{i},X_{1},\ldots, X_{k-2})-\nabla_{E_{i}}\nabla_{X_{k-1}}h(E_{i}, X_{1},\ldots, X_{k-2})\right)
\end{align*}

Since $h$ is symmetric, a standard computation gives that
\begin{align*}
\left(R(X,Y)h\right)(X_{1},\ldots, X_{k-1})=&-\sum_{l}\left(h(E_{l},X_{2},\ldots, X_{k-1})\mathrm{Riem}(X,Y,X_{1},E_{l})\right.\\
&\quad\quad\quad\left.+\ldots+h(X_{1},\ldots,E_{l})\mathrm{Riem}(X,Y,X_{k-1},E_{l})\right).
\end{align*}
Hence,
\begin{align*}
 &\sum_{i}\left(\nabla_{X_{1}}\nabla_{E_{i}}h(E_{i}, X_{2},\ldots, X_{k-1})-\nabla_{E_{i}}\nabla_{X_{1}}h(E_{i},X_{2},\ldots, X_{k-1})\right)\\
 =&\sum_{i}(R(X_1,E_i)h)(E_{i},X_{2},\ldots, X_{k-1})\\
 =&-\sum_{i,l}\left(h(E_{l},X_{2},\ldots, X_{k-1})\mathrm{Riem}(X_{1},E_{i},E_{i},E_{l})+h(E_{i},E_{l},X_{3},\ldots, X_{k-1})\mathrm{Riem}(X_{1},E_{i}, X_{2},E_{l})\right.\\
 &\left.\quad\quad\quad+\ldots+h(E_{i}, X_{2},\ldots, X_{k-2},E_{l})\mathrm{Riem}(X_{1},E_{i},X_{k-1},E_{l})\right),
\end{align*}
and analogously for the other terms. We thus get the validity of the first equality in \eqref{WeitSampsApp}. The second equality is a direct computation, using the expression \eqref{fkric} for $\mathfrak{Ric}$.
\end{proof}
\end{appendix}

\begin{acknowledgement*}
The first and second author gratefully acknowledge the partial support of the PRIN project 2017 ``Real and Complex Manifolds: Topology, Geometry and holomorphic dynamics'' and of the MIUR project ``Dipartimenti di Eccellenza 2018-2022''- CUP: E11G18000350001
The first author is partially supported by INdAM-GNSAGA. The second and the third author are partially supported by INdAM-GNAMPA.
\end{acknowledgement*}


\end{document}